\DeclareMathOperator{\R}{\mathbb{R}}
\DeclareMathOperator{\N}{\mathbb{N}}
\newcommand{\Dt}{\mathrm{d}t}
\newcommand{\II}{\mathcal I}
\renewcommand{\O}{\mathcal{O}}
\newcommand{\eps}{\varepsilon}
\newcommand{\dt}{\Delta t}
\DeclareMathOperator{\Id}{Id}
\DeclareMathOperator{\EE}{\mathfrak{E}}
\newcommand{\Phie}{\Phi_{\text{E}}}
\newcommand{\Phii}{\Phi_{\text{I}}}
\newcommand{\dPhie}{\dot\Phi_{\text{E}}}
\newcommand{\dPhii}{\dot\Phi_{\text{I}}}
\newcommand{\Tend}{T_{end}}
\newcommand{\km}{k_{\max}}
\newtheorem{assumption}{Assumption}
\newtheorem{algorithm}{Algorithm}
\newcommand{\algs}[4]{#1^{#2,[#3],{#4}}}
\renewcommand{\dot}[1]{\overset{\boldsymbol .}{#1}}
\newcommand{\method}[2]{\text{HBPC}(#1,#2)}
\newcommand{\methodt}[2]{\text{HBPC*}(#1,#2)}
\begin{document}

\title{Parallel-in-time high-order multiderivative IMEX solvers
}


\author{Jochen~Sch\"utz \and David~C.~Seal \and Jonas~Zeifang}


\institute{J. Sch\"utz \at
            Faculty of Sciences, Universiteit Hasselt, \\ Agoralaan Gebouw D, BE-3590 Diepenbeek, Belgium \\
              \email{jochen.schuetz@uhasselt.be}
           \and
           D. C. Seal\at
          United States Naval Academy, Department of Mathematics, \\ 572C Holloway Road, Annapolis, MD 21402, USA \\ \email{seal@usna.edu} 
          \and 
          J. Zeifang \at
          Institute of Aerodynamics and Gas Dynamics, University of Stuttgart, \\ Pfaffenwaldring 21, DE-70569 Stuttgart, Germany \\ \email{zeifang@iag.uni-stuttgart.de}
}

\date{Received: date / Accepted: date}

\maketitle

\begin{abstract}
    In this work, we present a novel class of parallelizable high-order time integration schemes for the approximate solution of additive ODEs. The methods achieve high order through a combination of a suitable quadrature formula involving multiple derivatives of the ODE's right-hand side and a predictor-corrector ansatz. The latter approach is designed in such a way that parallelism in time is made possible. We present thorough analysis as well as numerical results that showcase scaling opportunities of methods from this class of solvers.
\keywords{Multiderivative \and IMEX \and parallelism in time \and time integration}
\end{abstract}

\section{Introduction}\label{sec:Introduction}

In this work, we consider numerical approximations of nonlinear differential equations of the form
\begin{align}
 \label{eq:ode}
 w'(t) &= \Phi(w) \equiv \Phii(w) + \Phie(w), \quad t \in (0, \Tend), \\ 
 \notag w(0) &= w_0. 
\end{align}
It is assumed that the right-hand side $\Phi$ is split into the contributions $\Phii$ (`stiff')  and $\Phie$ (`non-stiff'), to account for different scales in the solution. The rationale is that the term $\Phii$ will be treated implicitly to obtain a stable method, while the term $\Phie$ will be treated explicitly, typically for efficiency reasons by reducing the complexity of the algebraic solves required to address the implicit terms per time step. 

Differential equations that can be put into this form arise frequently, often resulting from the discretization of singularly perturbed partial differential equations. Some examples include problems in meteorology~\cite{giraldo2010semi}, geophysics~\cite{BispenIMEXSWE}, aerodynamics~\cite{BoQiuRussoXiong19,HaJiLi12,RSIMEXFullEuler}, molecular dynamics~\cite{filbet2010class} and many more. The list is by no means exhaustive. Here, we assume a splitting has already been performed.  Our goal is to develop a high-order implicit-explicit (IMEX) method that lends itself to time-parallelism. 

At the core of this work is the novel discretization method of Eq.~\ref{eq:ode} developed in~\cite{SealSchuetz19}. To the best of the authors' knowledge, this method is, together with the very recent publication~\cite{AbdiHojjati2020}, 
the first one to combine the IMEX (implicit/explicit, see, e.g.,~\cite{Ascher1997,Ascher1995,Bos09,pareschi2000implicit,christopher2001additive}) and the multiderivative paradigms~\cite{AiOk2019,TC10,Seal2015b,KastlungerWanner1972,TurTur2017,Seal13}. 
In this context, the term \emph{multiderivative} refers to the fact that higher derivatives of the unknown solution $w$ are used in the numerical method. While more established methods (Runge-Kutta, Adams methods, backward differentiation formulas (BDF) and the like) rely on only evaluating $\Phie$ and $\Phii$, multiderivative methods also take the temporal derivatives into account. 
To be more precise, in this work, we use two derivatives, i.e., also the quantities\footnote{The dot here ($\cdot$) refers to the time derivative $d/dt$, whereas the prime ($'$) refers to the Jacobian of the vector valued $\Phii$ and $\Phie$.}
\begin{align*} 
 \dPhii(w) := \Phii'(w) \Phi(w), \qquad \text{and} \qquad \dPhie(w) := \Phie'(w) \Phi(w)
\end{align*}
are taken into account. Note that there holds $w''(t) = \dot \Phi(w) \equiv \dPhii(w) + \dPhie(w)$. This has of course the potential to increase the order of consistency without adding more stages or steps, respectively. The limitation to two derivatives is only for the ease of presentation. 
The method developed in~\cite{SealSchuetz19} has already been extended to cope with higher derivatives~\cite{alex2020highorder}; and also in the context of this present work, this is easily possible. 

There are three unique features to consider for the class of methods presented here: 
\begin{itemize} 
 \item The methods we present use a \textbf{predictor-corrector} strategy for increasing the overall order of accuracy. This means that in each timestep, a predicted value  (we will indicate values corresponding to the predictor with $[0]$), as well as several corrector steps (indicated with $[k]$, $1 \leq k \leq \km$) are computed. Here, $\km$ is a user-defined parameter that defines the maximum number of corrections in a single time step. Similiar to deferred correction, or DC-type methods (cf.~\cite{CausleySeal19,DuttGreenRokh00,OngSpiteri} and the references therein), the corrected steps pick up an order of accuracy each time, until some maximum accuracy is reached based on the underlying quadrature rule.
 
 \item The methods we construct are derived from two-derivative Runge-Kutta methods, however, they are modified in such a way that they can be applied to any \textbf{implicit-explicit (IMEX) splitting} $\Phi(w) = \Phii(w) + \Phie(w)$ of the ODE. This ultimately leads to the fact that the methods are not one-step, but two-step methods, with a very mild dependency on the previous step. As there are multiple stages involved, this leads to general linear methods with associated convergence theory~\cite{Butcher1966}, but we have higher derivatives of the unknown involved in our analysis.
 
 \item The methods we produce can all leverage \textbf{parallel-in-time} implementations to leverage distributed or multicore computer architectures.
 
\end{itemize}

The ability to parallelize an ODE solver in time has become more and more important in recent years with the number of processors available growing constantly. Because of the causality principle, parallelization in time poses different challenges then parallelization in space. For an overview of temporally parallel methods, we refer to Gander's work~\cite{Gander2015} and the very recent overview article by Ong and Schroder~\cite{OngSchroder2020}. Also the website \cite{PintURL} is a beautiful source of information on the topic. Temporally parallel methods can be cast into different classes. The scheme we present in this work uses the concept of pipelining, i.e., the information and data flow is directed in such a way that the correction iterations can be put on different processors. We have been inspired by the RIDC (revisionist integral deferred correction) schemes~\cite{ChriOng11,ChriOng10,ChriOng2015}, however, similar ideas have been around for a long time, see~\cite{MirankerLiniger1967}. The essence of pipelining is that, given one has enough processors at one's disposal, the higher order achieved through the corrections can be obtained in roughly the same wallclock-time that it takes to compute the lower-order predictor. In our case, the predictor's convergence order is two, while the corrections can increase this order up to the order of the underlying Runge-Kutta quadrature. In our examples, this is four, six and eight, respectively, but conceptually, the order is not limited to this.

The paper is organized as follows: In Sec.~\ref{sec:DescriptionAlg}, we describe the underlying algorithm in detail and introduce the necessary notation. Thereafter, in Sec.~\ref{sec:ConvergenceAnalysis}, we analyze stability and convergence of the method through writing it in a one-step fashion. The method is inherently time-parallel, which is laid out in Sec.~\ref{sec:ParallelStrategy}. Also, two alternative approaches are shown and later compared. In Sec.~\ref{sec:NumRes} we show numerical results for well-known test cases. We find some improvements that one can make to the numerical algorithm described earlier; these are shown in Sec.~\ref{sec:ModificationsAlg}. Subsequently, scaling results are presented. As usual, the last section, Sec.~\ref{sec:Conclusion} offers conclusion and outlook.

\section{Numerical algorithm}\label{sec:DescriptionAlg}

\label{sec:algorithm}
We start with setting the notation that is used in this work. For expository purposes, we work with a fixed timestep $\dt$, and therefore with a fixed total number of timesteps, denoted by $N$, we must have
\begin{align*} 
 \dt := \frac{\Tend}{N}.
\end{align*}
The discrete time levels are defined as
\begin{align*} 
 t^n := n \dt, \qquad 0 \leq n \leq N.
\end{align*}
Note that the uniform timestep we assume here is not necessary in practical computations, but it simplifies the analysis.
Throughout the whole publication, we assume that $\Phi$ and the split functions $\Phii$, $\Phie$ and the solution $w$ are sufficiently smooth to warrant picking up high-order accuracy from the numerical method.  Formally, we make the following assumption:
\begin{assumption}\label{ass:lipschitz}
 All occuring functions $\Phi$, $\Phii$, $\Phie$ and their temporal derivatives $\dot\Phi,\dPhii$, $\dPhie$ are assumed to be smooth and Lipschitz continuous. 
\end{assumption}

In this work, we are interested in multiderivative Runge-Kutta methods. These are a relatively straightforward extension of well-known Runge-Kutta methods that include extra derivatives of the right hand side function \cite[Definition 1]{Seal13}).  To set notation, we include a full definition of a \emph{two}-derivative Runge-Kutta method.  Higher derivatives can be included by including more terms in the expansion for each of the stages.
\begin{definition}\label{def:RKlimit}
Let $w^n$ denote an approximate solution to $w$ at point $t^n$, and let $w^{n,l}$, $1 \leq l \leq s$ denote the stage values to be computed. A \emph{two-derivative Runge-Kutta method} is any method that can be cast as
\begin{align*} 
 w^{n,l} := w^n + \dt \sum_{j=1}^s B_{lj}^{(1)} \Phi(w^{n,j}) + \dt^2 \sum_{j=1}^s B_{lj}^{(2)} \dot\Phi(w^{n,j}), \qquad 1 \leq l \leq s,
\end{align*}
and update given by
\begin{equation*}
w^{n+1} = w^n + \dt \sum_{j=1}^s b_j^{(1)} \Phi(w^{n,j}) + \dt^2 \sum_{j=1}^s b^{(2)}_j \dot\Phi(w^{n,j}),
\end{equation*}
where the $B^{(1)}$, $B^{(2)}$, $b^{(1)}$, and $b^{(2)}$ are the Butcher tableaux that define the scheme.  We say the method is \emph{globally stiffly accurate} if
$b^{(1)}_j = B^{(1)}_{sj}$ and
$b^{(2)}_j = B^{(2)}_{sj}$, i.e., the last stage defines the update:
\begin{align*} 
 w^{n+1} := w^{n,s}.
\end{align*}
\end{definition}

Note that this is a fully coupled system of (potentially nonlinear) equations whenever the tableaux are not lower triangular. In this form, the Runge-Kutta method is hence typically not suited for high-dimensional ODEs, nonetheless, they can serve as excellent background methods for creating more efficient solvers.

Of particular note are the two-derivative Runge-Kutta \emph{collocation} methods.  These methods are derived by fitting a Hermite-Birkhoff polynomial interpolant through the time instances given by $c \dt$ and integrating the result.  
In this work, we use equidistant collocation points, which implies:
\begin{itemize}
	\item the method is of order $q$, with $q = 2s$ being twice the number of stages (hence, number of elements in $c$);
	\item the stage order is also $q$; 
	\item the last stage is equal to the update step, which corresponds to the globally stiffly accurate or first-same-as-last property already known in standard Runge-Kutta methods \cite{BOSCARINO201760,HaiWan1,HaiWan} and will have an impact on asymptotic properties of the method \cite{SKN15}. 
\end{itemize}

\begin{example}[Two-derivative Hermite-Birkhoff collocation methods]
\label{expl:rk}
In this work, we use the following two-derivative Runge-Kutta methods: 
 \begin{itemize} 
  \item A fourth-order method ($q=4$) with two stages ($s=2$, one being fully explicit), which exactly corresponds to the method used in \cite{SealSchuetz19}: 
  \begin{align}\label{eq:butcher4}
        c = \begin{pmatrix}0 \\ 1\end{pmatrix}, \quad
        B^{(1)} = 
        \begin{pmatrix}
        0 & 0 \\[0.5em]
        \frac 1 2& \frac 1 2
        \end{pmatrix}, \quad
        B^{(2)} = 
        \begin{pmatrix}
        0 & 0   \\[0.5em]
        \frac 1 {12} & \frac{-1}{12}
        \end{pmatrix}.
        \end{align}
  \item A sixth-order method  ($q=6$) with three stages ($s=3$, one being fully explicit), as also used in \cite{SSJ2017}:
  \begin{align}\label{eq:butcher6}
        c = \begin{pmatrix}0\\ \frac 1 2\\ 1\end{pmatrix}, \quad
        B^{(1)} = 
        \begin{pmatrix}
        0 & 0 & 0 \\[0.5em]
        \frac{101}{480} & \frac{8}{30} & \frac{55}{2400} \\[0.5em]
        \frac{7}{30} & \frac{16}{30} & \frac{7}{30} \\
        \end{pmatrix}, \quad
        B^{(2)} = 
        \begin{pmatrix}
        0 & 0 & 0  \\[0.5em]
        \frac{65}{4800} & -\frac{25}{600} &  -\frac{25}{8000} \\[0.5em]
        \frac{5} {300} & 0 &  -\frac{5}{300} 
        \end{pmatrix}.
        \end{align}
  \item An eigth-order method ($q=8$) with four stages ($s=4$, one being fully explicit):
     \begin{align}\label{eq:butcher8}
            c = \begin{pmatrix}0\\[0.5em] \frac 1 3\\[0.5em] \frac 2 3\\[0.5em] 1\end{pmatrix}, \quad
            B^{(1)} &= 
            \begin{pmatrix}
            0 & 0 & 0 & 0\\[0.5em]
            \frac{6893}{54432}& \frac{313}{2016}& \frac{89}{2016}& \frac{397}{54432} \\[0.5em]
            \frac{223}{1701}&    \frac{20}{63}&   \frac{13}{63}&   \frac{20}{1701} \\[0.5em]
            \frac{31}{224}&   \frac{81}{224}&  \frac{81}{224}&    \frac{31}{224} 
            \end{pmatrix}, \\
            B^{(2)} &= 
            \begin{pmatrix}
            0 & 0 & 0 & 0  \\[0.5em]
            \frac{1283}{272160}& -\frac{851}{30240}& -\frac{269}{30240}& -\frac{163}{272160} \\[0.5em]
            \frac{43}{8505}&    -\frac{16}{945}&    -\frac{19}{945}&     -\frac{8}{8505} \\[0.5em]
            \frac{19}{3360}&    -\frac{9}{1120}&     \frac{9}{1120}&    -\frac{19}{3360}
            \end{pmatrix}.
            \end{align}
\end{itemize}
\end{example}

We choose to use the above-mentioned class of methods with equispaced abscissa for the sake of a clearer presentation of results. Other multiderivative Runge-Kutta methods could also be used, of course with the necessary modifications. Higher derivatives can also be incorporated as an alternative option to further increase the overall order.

With each Runge-Kutta method, there is an underlying quadrature rule connected to the solver.  That is, given the quantities $\phi^j$, $1 \leq j \leq s$, we define the ``quadrature rule" as the multiderivative Runge-Kutta flux update, given by 
	\begin{align*} 
	 \II_l(\phi^1, \ldots \phi^s ) := 
        \dt \sum_{j=1}^s B^{(1)}_{lj}  \phi^j  + \dt^2 \sum_{j=1}^s B^{(2)}_{lj} {\dot \phi}^j. 
	\end{align*}
These terms are precisely the terms required to produce each stage in the Runge-Kutta method.  They may or may not be high order quantities, unless further assumptions are made about the solver. In the case of the Hermite-Birkhoff-type approach, they are of higher order:

\begin{lemma} 
For each Hermite-Birkhoff Runge-Kutta method, there holds,
\begin{align*} 
 \int_{t^n}^{t^{n+1}} \Phi(w(t)) \Dt = \II_l\left(\Phi(w(t^n + c_1 \dt)), \ldots, \Phi(w(t^n + c_s \dt))\right)  + \O(\dt^{q+1}).
\end{align*}
\end{lemma}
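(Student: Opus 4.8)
The plan is to recognise $\II_l$ as the \emph{exact} integral of the Hermite--Birkhoff interpolant of $\Phi(w(\cdot))$ and then to bound the interpolation error. First I would rescale to a reference interval by setting $t = t^n + \tau\dt$ with $\tau \in [0,1]$ and introducing $g(\tau) := \Phi(w(t^n + \tau\dt))$. By the chain rule together with $w' = \Phi(w)$ one has $g'(\tau) = \dt\,\dot\Phi(w(t^n+\tau\dt))$, so the nodal data $\phi^j = \Phi(w(t^n + c_j\dt))$ and $\dot\phi^j = \dot\Phi(w(t^n + c_j\dt))$ supply exactly the $2s$ Hermite conditions $g(c_j) = \phi^j$ and $g'(c_j) = \dt\,\dot\phi^j$ for $j = 1,\dots,s$.

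Next I would invoke the defining property of the collocation construction: the weights $B^{(1)}_{lj}$, $B^{(2)}_{lj}$ are precisely those for which $\II_l = \dt \int_0^{c_l} P(\tau)\,\Dtau$, where $P$ is the unique polynomial of degree $\le 2s-1$ satisfying the Hermite conditions above. After the same change of variables the left-hand side becomes $\int_{t^n}^{t^n + c_l\dt}\Phi(w(t))\,\Dt = \dt\int_0^{c_l} g(\tau)\,\Dtau$, so the quantity to control is
\begin{align*}
\int_{t^n}^{t^n + c_l\dt}\Phi(w(t))\,\Dt - \II_l = \dt\int_0^{c_l}\bigl(g(\tau) - P(\tau)\bigr)\,\Dtau.
\end{align*}
The interval $[t^n,t^{n+1}]$ appearing in the statement is the instance $c_l = 1$, e.g. the globally stiffly accurate last stage; the general case is identical.

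The core of the argument is the standard Hermite remainder: since $P$ matches $g$ and its first derivative at the $s$ nodes, there holds $g(\tau) - P(\tau) = \tfrac{1}{(2s)!}g^{(2s)}(\xi_\tau)\prod_{j=1}^s (\tau - c_j)^2$ for some $\xi_\tau \in [0,1]$, whence $\|g - P\|_\infty \le C\,\|g^{(2s)}\|_\infty$ with $C$ depending only on the fixed abscissae $c$. The decisive observation is that each $\tau$-derivative of $g$ contributes a factor $\dt$, so $g^{(2s)}(\tau) = \dt^{2s}\,\frac{d^{2s}}{dt^{2s}}\Phi(w(t))\big|_{t = t^n + \tau\dt}$; under Assumption~\ref{ass:lipschitz} this $t$-derivative is bounded independently of $\dt$, giving $\|g^{(2s)}\|_\infty = \O(\dt^{2s})$.

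Combining the two displays yields $\dt\int_0^{c_l}(g-P)\,\Dtau = \dt\cdot\O(\dt^{2s}) = \O(\dt^{2s+1}) = \O(\dt^{q+1})$, since $q = 2s$, which is the claim. The only genuinely delicate point is the bookkeeping of the $\dt$-powers: one must keep the Hermite error on the reference interval (so that $g^{(2s)}$, carrying $\dt^{2s}$, appears rather than $\frac{d^{2s}}{dt^{2s}}\Phi$), and recognise that the single extra factor $\dt$ coming from $\Dt = \dt\,\Dtau$ is exactly what promotes the exponent from $2s$ to $2s+1$. Everything else is a routine consequence of the smoothness granted by Assumption~\ref{ass:lipschitz}.
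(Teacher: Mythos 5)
Your proposal is correct and is essentially the paper's own argument, fully spelled out: the paper's proof is a one-line appeal to the construction (the degree-$(2s-1)$ Hermite--Birkhoff interpolant gives an integral approximation of order $q+1$, with the extra power of $\dt$ coming from the length of the integration interval), and your rescaling to $[0,1]$, the Hermite remainder formula, and the $\dt$-power bookkeeping are precisely the details behind that sketch. Your reading of the mismatch between the integration endpoint $t^{n+1}$ and the stage index $l$ (the general case being $\int_{t^n}^{t^n+c_l\dt}$) is also the natural one.
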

\begin{proof}
This is due to the construction: The Hermite-Birkhoff polynomial is of order $2s-1=q-1$, which gives an integral approximation of order $q+1$. (Note that the length of the integration area is $\dt$.)

\qed
\end{proof}

In the sequel, we exploit precisely this quadrature to extend the method shown in \cite{SealSchuetz19} to higher orders and parallelism in time. 
Before we actually define the algorithm, let us clarify the notation used. The algorithm to be presented relies on the approximated quantities:
\begin{align} 
 \label{eq:wnkl}
 \algs w  n {k} l \approx w(t^{n} + c_l \dt), \quad 0 \leq n \leq N, \quad 0 \leq k \leq \km, \quad 1 \leq l \leq s.
\end{align}
Here, $n$ is the usual discrete time level, and the $l$ refers to the stages that are present within the Runge-Kutta method. The index $k$ is a parameter that is associated to `correction' steps (to be explained below). The final index $\km$ is a fixed parameter chosen by the user to describe the total number of iterations sought.  For non-stiff problems, it is typically dictated by the maximum order that can be reached, while for stiff problems, it can be advantageous to use a larger $\km$ to overcome convergence issues \cite{SealSchuetz19}, or even allow the Algorithm to choose this adaptively. 

Finally, we make the abbreviations 
\begin{align*} 
 \algs \Phi n k l := \Phi\left(\algs w n k l\right), \quad
 \algs \Phii n k l := \Phii\left(\algs w n k l\right), \quad
 \algs \Phie n k l := \Phie\left(\algs w n k l\right),
\end{align*}
to simplify the notation.

We are now ready to formulate the following algorithm.  Given the Butcher tableaux $B^{(1)}$ and $B^{(2)}$ and the time instances $c$, (e.g., any of the methods from Example~\ref{expl:rk}), the Hermite-Birkhoff Predictor Corrector method is as follows:
\begin{algorithm}[$\method{q,\km}$]\label{alg:mdode}
To advance the solution to Eq.~\eqref{eq:ode} from time level $t^{n}$ to time level $t^{n+1}$, fill the values $\algs w n 0 l$ using a second-order IMEX-Taylor method:

\begin{enumerate}
	\item \textbf{Predict.} Solve the following expression for $\algs w n 0 l$ and $1 \leq l \leq s$: 
	 \begin{align} 
	 \begin{split}
	 \label{eq:predict}
	 \algs w {n} 0 l := {\color{VioletRed}\algs w {n-1} 0 s} & + c_l\dt \left(\algs {\Phii} n 0 l + {\color{VioletRed}\algs{\Phie} {n-1} 0 s}\right) \\ & + \frac{(c_l\dt)^2}{2} \left({\color{VioletRed}\algs {\dPhie} {n-1} 0 s} - \algs {\dPhii} n 0 l\right). 
    \end{split}
	\end{align}
	Subsequently:
	\item \textbf{Correct.} Solve the following for $\algs w {n} {k+1} l$, for each $2 \leq l \leq s$ and each $0 \leq k < k_{\max}$: 
	\begin{align}
	\begin{split}
	\label{eq:correct}
	\algs w {n} {k+1} 1 &:= {{\color{VioletRed}\algs w {n-1} {k+2} s}}_{}, \\
	\algs w {n} {k+1} l &:= {\color{VioletRed}\algs w {n-1} {k+2} s}  \\ 
	&
	+ {\dt} \left( \algs {\Phii} {n} {k+1} l - \algs {\Phii} n {k} l\right)
	- \frac{\dt^2}{2} \left(\algs {\dPhii} {n} {k+1} l- \algs {\dPhii} {n} {k} l\right)
	\\ & + \II_l(\algs \Phi{n}{k} 0, \algs \Phi n {k} 1, \ldots, \algs \Phi n {k} s).
	\end{split}
	\end{align}
	In order to close the recursion, if $k = k_{\max} -1$, then replace each of the $k+2$ with $\km$.
	  
	\item \textbf{Update.} In order to preserve the first-same-as-last property, we put
	\begin{align*} 
	        w^{n+1} := \algs w n {k_{\max}} s.
	\end{align*}

\end{enumerate}

Finally, to seed initial conditions for this solver, we define 
\begin{align*}
	\algs w {-1} k s := w_0, \qquad 0 \leq k \leq \km.
\end{align*}

\end{algorithm}

Note that the starting value of $k=0$ denotes a straightforward second-order IMEX-Taylor scheme; each $k > 0$ represents a corrected version thereof, taking into account the quadrature rule obtained from the Runge-Kutta scheme. The reasoning is that with each additional correction, we see one extra order of convergence until the maximal order of the Runge-Kutta method, $q$, has been found.  

\begin{remark} 
	The difference between this algorithm and our previous one found in \cite{SealSchuetz19} lies in the highlighted red terms:
	\[
		\left\{{\algs w {n-1} 0 s}, {\algs{\Phie} {n-1} 0 s}, {\algs{\dPhie} {n-1} 0 s}, {\algs w {n-1} {k+2} s}\right\}.
	\]
	In our original work each of these would have been evaluated at $w^n$.
	Here, we work with different iterate numbers.  \textbf{This potent modification makes it possible to parallelize the method in time without sacrificing the order of accuracy.} The idea that will be layed out in this publication is very similar to -- and in fact inspired by -- the strategy proposed in \cite{ChriOng11,ChriOng10}.
\end{remark}


\begin{remark}
 Please note that Alg. \ref{alg:mdode} is not a Runge-Kutta method. 
\end{remark}

\begin{remark} 
There is no implicitness in the Runge-Kutta quadrature $\II_l$. The only implicitness is in the difference of the $\Phii$ and $\dPhii$. This term is used to introduce the necessary stability for stiff problems. 
\end{remark}

In the next three sections we present a thorough analysis of this solver followed by a discussion of a parallization strategy and then numerical results.  In Section \ref{sec:ModificationsAlg}, we suggest an alternative formulation that provides a low-storage alternative and even better scaling results.


\section{Convergence analysis}\label{sec:ConvergenceAnalysis}
Due to the changes made in the algorithm in comparison to the one proposed in \cite{SealSchuetz19}, this is no longer a one-step multiderivative time integrator, but a general linear method. It has both a multistage as well as a multistep flavour, and we need to follow the convergence analysis outlined in the seminal work by Butcher \cite{Butcher1966} to conduct our analysis. To ease the presentation, we will in the sequel assume that we are treating a scalar differential equation, i.e., we postulate that 
\begin{align*} 
 \Phi: \R \rightarrow \R.
\end{align*}

We start by rewriting Alg. \ref{alg:mdode} to make it more accessible to a convergence analysis. Define the vector $Y^{n+1} \in \R^{s\cdot(k_{\max}+1)}$ consisting of all stages and all correction steps at time instance $t^n = n \dt$,  $n > 0$, i.e., 
\begin{align*} 
 Y^{n+1} := \left(\begin{matrix}
                \algs w n 0 1 \\
                \colon \\
                \algs w n 0 s \\
                \algs w n 1 1 \\
                \colon \\
                \algs w n 1 s \\
                \colon  \\
                \algs w n {k_{\max}} s
          \end{matrix} \right) \in \R^{s\cdot (k_{\max}+1)}. 
\end{align*}
Note that we have made a shift in the $n$ for a clearer exposition. 
For $n = 0$, we define $Y^0$ to be the vector with entries consisting of the solution at time $t=0$:
\begin{align*}
 Y^0 := \left(\begin{matrix}
                w_0 \\
                \colon \\
                w_0
          \end{matrix} \right) \in \R^{s\cdot (k_{\max}+1)}. 
\end{align*}
Then, we can write the time integrator from Alg. \ref{alg:mdode} in a ``one-step-fashion'' as 
\begin{align*} 
 Y^{n+1} = A Y^n & + \dt \left(B_{E} \Phie(Y^{n+1}) + B_{I} \Phii(Y^{n+1})\right) \\ & + \frac{\dt^2}{2} \left(\tilde B_{E} {\dPhie}(Y^{n+1}) + \tilde B_{I} {\dPhii}(Y^{n+1})\right) 
\end{align*}
for matrices $A$, $B_I$, $B_E$, $\tilde B_E$ and $\tilde B_I$ in $\R^{s\cdot (k_{\max}+1)\times s \cdot (k_{\max}+1)}$. 

In the sequel, we use the well-known inf-norm $\|\cdot\|_{\infty}$ for both matrices and vectors. For the sake of readability, we write $\|\cdot\|$ instead of $\|\cdot\|_{\infty}$. That is, for $v \in \R^\kappa$, we define $\|v\| := \max_{1 \leq i \leq \kappa} |v_i|$; for matrices $C \in \R^{\kappa \times \kappa}$, there holds 
\begin{align*}
 \|C\| := \sup_{v \in \R^\kappa} \frac{\|Cv\|}{\|v\|}.
\end{align*}

\begin{lemma} 
 The matrix $A$ is power-bound by one, i.e., for each $n \in \N$, there holds
 \begin{align*} 
  \|A^n\| \leq 1.
 \end{align*}

\end{lemma}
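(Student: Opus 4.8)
The plan is to identify the matrix $A$ explicitly as the structure governing the homogeneous part of the one-step recursion, and to show that it is a (right-)stochastic-like matrix whose row sums equal one with nonnegative entries, or—failing that—a nilpotent-plus-identity structure whose powers stay bounded in the inf-norm. Looking at Algorithm~\ref{alg:mdode}, the terms that contribute to $A$ (as opposed to the $\dt$ and $\dt^2$ terms collected into $B_E, B_I, \tilde B_E, \tilde B_I$) are exactly the red ``carry-over'' terms $\algs w {n-1} 0 s$ and $\algs w {n-1} {k+2} s$. Each such term copies a single previous-step component into the current-step component with coefficient exactly $1$. So my first step would be to write out $A$ row by row: each row of $A$ has exactly one nonzero entry, equal to $1$, located at the column index corresponding to the ``source'' stage $\algs w {n-1} \bullet s$. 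This makes $A$ a $0/1$ matrix with exactly one $1$ per row.

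First I would argue that, because each row has a single entry equal to $1$ and all other entries zero, we immediately have $\|A\| = \max_i \sum_j |A_{ij}| = 1$, since the inf-norm of a matrix is its maximum absolute row sum. The crux, however, is controlling $\|A^n\|$ for all $n$, not just $\|A\|$; in general $\|A\| = 1$ does not give $\|A^n\| \le 1$ unless the norm is submultiplicative in a way that matches the structure. The clean way is to observe that a $0/1$ matrix with exactly one $1$ per row represents a function (a map) on the index set $\{1, \dots, s(k_{\max}+1)\}$: row $i$ maps to the unique column $j=\sigma(i)$ with $A_{i\sigma(i)}=1$. Then $A^n$ is the $0/1$ matrix of the $n$-fold composite $\sigma^n$, which again has exactly one $1$ per row. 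Hence $\|A^n\| = 1$ for every $n$, and in particular $\|A^n\| \le 1$, by the very same row-sum computation applied to $A^n$.

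The main obstacle, and the step deserving genuine care, is verifying that $A$ really does have exactly one nonzero entry of value $1$ per row, with no row contributing a coefficient other than $1$ and no row picking up contributions from more than one source. This requires a careful bookkeeping pass over the three cases in the algorithm: the predictor rows (where the source is $\algs w {n-1} 0 s$), the generic corrector rows for $0 \le k < k_{\max}-1$ (source $\algs w {n-1} {k+2} s$), and the boundary case $k = k_{\max}-1$ where each $k+2$ is replaced by $\km$ (source $\algs w {n-1} {\km} s$). In every instance the structural term appearing outside the $\dt$ and $\dt^2$ brackets is a \emph{single} previous-step stage value carried with unit coefficient, so each row of $A$ is indeed a standard basis (row) vector. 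I would therefore present the argument as: (i) read off $A$ from \eqref{eq:predict} and \eqref{eq:correct}, confirming the one-nonzero-per-row property; (ii) note this identifies $A$ with a map $\sigma$ on the index set; (iii) conclude $A^n$ inherits the same property, whence $\|A^n\|=1\le 1$.
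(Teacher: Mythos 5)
Your proposal is correct and takes essentially the same approach as the paper: both arguments rest on the observation that $A$ is a $0/1$ matrix with exactly one unit entry per row (the paper writes this in block form using the matrix $\EE$), so that every component of $Av$ equals some component of $v$, which yields $\|Av\| \le \|v\|$ and hence the bound on all powers. Your additional step of viewing $A$ as the matrix of a map $\sigma$ on the index set, so that $A^n$ is again such a matrix, just makes explicit what the paper leaves implicit (iterating the contraction, equivalently submultiplicativity of the induced norm), and your worry that $\|A\|=1$ alone might not control $\|A^n\|$ is unfounded here, since the paper's norm is the induced operator norm, which is submultiplicative.
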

\begin{proof}
 Define the matrix $\EE \in \R^{s\times s}$ that is zero except for the last column, which is filled with ones, i.e., 
 \begin{align*} 
  \EE = \left( \begin{matrix} 
                0 &0 & \cdots &0 & 1 \\
                0 &0 & \cdots &0 & 1 \\
                \vdots & \vdots &  & \vdots& \vdots \\
                0 & 0 & \cdots & 0 & 1
               \end{matrix} \right). 
 \end{align*}
 Then, $A$ is given as the block matrix 
 \begin{align} 
  \label{eq:A}
  A = \left( \begin{matrix} 
              \EE & 0 & 0 & 0 & \cdots & 0 & 0 \\
              0 & 0 & \EE & 0 & \cdots & 0 & 0 \\
              0 & 0 & 0 & \EE & \cdots & 0 & 0 \\
              \vdots & \vdots & \vdots & \vdots & \ddots & \vdots & \vdots \\
              0 & 0 & 0 & 0 & \cdots & \EE & 0 \\
              0 & 0 & 0 & 0 & \cdots & 0 & \EE \\
              0 & 0 & 0 & 0 & \cdots & 0 & \EE
             \end{matrix} \right).
 \end{align}
 There holds $\|Av\| \leq \|v\|$, which can be seen due to the fact that for each $1 \leq i \leq s\cdot (k_{\max}+1)$, there is a $1 \leq j \leq s \cdot (k_{\max}+1)$ such that $(Av)_i = v_j$. This concludes the proof.

\qed
\end{proof}

\begin{corollary}\label{cor:boundedness}
 There exists a constant $C \in \R$ such that for each $j \in \N^{\geq 0}$, there holds
 \begin{align*} 
  \|A^j B_E\| \leq C, \quad \|A^j B_I\| \leq C, \quad \|A^j \tilde B_E\| \leq C, \quad \|A^j \tilde B_I\| \leq C. 
 \end{align*}
\end{corollary}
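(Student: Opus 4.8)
The plan is to reduce the statement directly to the power-boundedness of $A$ established in the preceding lemma. The only additional ingredient required is submultiplicativity of the induced operator norm: for any $C, D \in \R^{\kappa \times \kappa}$ one has $\|CD\| \leq \|C\|\,\|D\|$, which is immediate from the definition of $\|\cdot\|$ as a supremum of $\|Cv\|/\|v\|$.

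First I would apply submultiplicativity with $C = A^j$ and $D$ running over the four fixed coefficient matrices, yielding
\begin{align*}
 \|A^j B_E\| \leq \|A^j\|\,\|B_E\|, \qquad \|A^j B_I\| \leq \|A^j\|\,\|B_I\|,
\end{align*}
together with the two analogous bounds for $\tilde B_E$ and $\tilde B_I$. Next I would invoke the lemma to replace $\|A^j\|$ by its bound $1$; for the boundary case $j = 0$ this is still valid because $A^0 = \Id$ has inf-norm equal to $1$. Each right-hand side then collapses to the norm of the corresponding fixed matrix alone.

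Finally, since $B_E$, $B_I$, $\tilde B_E$, $\tilde B_I$ are fixed matrices independent of $j$, their norms are finite constants, so the choice
\begin{align*}
 C := \max\left\{\|B_E\|,\ \|B_I\|,\ \|\tilde B_E\|,\ \|\tilde B_I\|\right\}
\end{align*}
bounds all four quantities uniformly in $j$, which is precisely the claim. I do not expect any genuine obstacle here: the entire force of the corollary is supplied by the power-boundedness of $A$, while submultiplicativity is a standard property of induced matrix norms. The only point deserving a line of care is the trivial case $j = 0$, handled by observing $A^0 = \Id$.
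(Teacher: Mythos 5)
Your proof is correct and follows exactly the route the paper intends: the corollary is stated without proof precisely because it follows immediately from the power-boundedness lemma $\|A^j\|\leq 1$ together with submultiplicativity of the induced norm, taking $C$ as the maximum of the four fixed matrix norms. No gaps.
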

Similarly to \cite{Butcher1966}, we define the vector $W^n$ as the vector of the exact solutions at time instant $t^n$, i.e., 
\begin{align*}
 W^{n+1} := \left(\begin{matrix}
                w(t^n + c_1 \dt) \\
                \colon \\
                w(t^n + c_s \dt) \\ 
                w(t^n + c_1 \dt) \\
                \colon \\
                w(t^n + c_s \dt)
          \end{matrix} \right) \in \R^{s\cdot (k_{\max}+1)}, \qquad n \geq 0. 
\end{align*}
We define $W^0 := Y^0$. 
For a convergence analysis, we are interested in the error, or the differnce between the approximate and the exact solution: $Z^n := Y^n - W^n$.  This quantity fulfills the following time-discrete equation: 

\begin{align}
 \label{eq:zn}
 \begin{split}
 Z^{n} &\equiv Y^{n} - W^{n} \\ = & 
         A Z^{n-1} + \dt B_{E} \left( \Phie(Y^{n})-\Phie(W^{n})\right) + 
                   \dt B_{I} \left( \Phii(Y^{n})-\Phii(W^{n})\right)  \\ 
                   & +\frac{\dt^2}{2} \tilde B_{E} \left( \dPhie(Y^{n})-\dPhie(W^{n})\right) + 
                   \frac{\dt^2}{2} \tilde B_{I} \left( \dPhii(Y^{n})-\dPhii(W^{n})\right) \\ &+ E_n,
 \end{split}
\end{align}
with $E^n$ the local consistency error 
\begin{align*}
 E^n := AW^{n-1} &+ \dt \left(B_{E} \Phie(W^{n}) +B_{I} \Phii(W^{n})\right) \\ &+ \frac{\dt^2}{2} \left(\tilde B_{E} \dPhie(W^{n}) +\tilde B_{I} \dPhii(W^{n})\right) - W^{n}.
\end{align*}
As in \cite{SealSchuetz19}, we can prove that the local consistency error at correction level $k$ is given by $\O\left(\dt^{\min\{2+k,q \}+1}\right)$. For $k=0$, this is straightforward -- this is second-order IMEX-Taylor.  For $k > 0$, techniques as in \cite{SealSchuetz19} (and long known to the spectral deferred correction (SDC) community, see, e.g., \cite{DuttGreenRokh00}) can be employed. 

\begin{lemma}\label{la:en}
 Subdivide $E^n$ into $\km+1$ blocks of size $s$ each; call the individual blocks $E^{n,[k]}$, $0 \leq k \leq \km$. Then, there holds  
 \begin{align*}
               E^{n,[k]} = O(\dt^{\min\{2+k,q\}+1}). 
              \end{align*}
\end{lemma}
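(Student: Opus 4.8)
The plan is to read $E^n$ off block-by-block from its definition: the $k$-th block $E^{n,[k]}$ is exactly the residual left when the exact stage values $w(t^n+c_l\dt)$ are inserted into the equation defining the $k$-th correction level --- the predictor \eqref{eq:predict} for $k=0$, and the corrector \eqref{eq:correct} for $1\le k\le\km$. I would therefore split the argument into the predictor block and the corrector blocks, using throughout that the stiffly accurate abscissae satisfy $c_s=1$, so that $w(t^{n-1}+c_s\dt)=w(t^n)$.

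For the base case $k=0$ I would substitute the exact solution into \eqref{eq:predict}, abbreviate $\tau:=c_l\dt=\O(\dt)$, and Taylor-expand every term about $t^n$, using $w'=\Phi$ and $w''=\dot\Phi=\dPhii+\dPhie$. The only mild care needed is that the implicit contributions are evaluated at $t^n+\tau$ while the explicit ones sit at $t^n$; expanding $\Phii(w(t^n+\tau))$ and $\dPhii(w(t^n+\tau))$ to first order in $\tau$ shows that the $\O(\tau)$ and $\O(\tau^2)$ terms assemble into $\tau\,w'(t^n)+\tfrac{\tau^2}{2}w''(t^n)$, i.e.\ precisely the second-order Taylor polynomial of $w(t^n+\tau)$. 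Hence $E^{n,[0]}=\O(\dt^3)=\O(\dt^{\min\{2,q\}+1})$, which is just the statement that the predictor is a consistent second-order IMEX--Taylor step.

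For the corrector blocks $1\le k\le\km$ the decisive observation is that the exact-solution vector $W^n$ carries the \emph{same} value $w(t^n+c_l\dt)$ in every correction level. Substituting into \eqref{eq:correct} (which produces level $k$ from level $k-1$), the stabilising implicit differences $\dt\big(\algs\Phii n {k} l-\algs\Phii n {k-1} l\big)$ and $\tfrac{\dt^2}{2}\big(\algs\dPhii n {k} l-\algs\dPhii n {k-1} l\big)$ cancel identically, so that all that survives is
\begin{align*}
 E^{n,[k]}_l = w(t^n) + \II_l\!\left(\Phi(w(t^n+c_1\dt)),\dots,\Phi(w(t^n+c_s\dt))\right) - w(t^n+c_l\dt).
\end{align*}
Writing $w(t^n+c_l\dt)-w(t^n)=\int_{t^n}^{t^n+c_l\dt}\Phi(w)\,\Dt$ identifies this with the Hermite--Birkhoff stage-quadrature error, which is $\O(\dt^{q+1})$ by the quadrature Lemma applied to each subinterval $[t^n,t^n+c_l\dt]$ (equivalently, by the stage order $q$ of the collocation method). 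Since $q+1\ge\min\{2+k,q\}+1$ for every $k$, the claimed bound follows at once --- indeed the estimate is uniformly $\O(\dt^{q+1})$ for $k\ge1$, so the stated bound holds a fortiori. The stage $l=1$ with $c_1=0$ gives residual zero, and the recursion closure at $k=\km-1$ only changes which exact previous-level endpoint $\algs w {n-1}{\bullet}{s}$ is read, both leaving the estimate untouched.

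The part most likely to hide a slip is the corrector cancellation together with the time-level bookkeeping: one must track that $+\dt\,\algs\Phii n k l$ and $-\dt\,\algs\Phii n {k-1} l$ both become $\dt\,\Phii(w(t^n+c_l\dt))$, so that the \emph{full} quadrature $\II_l$ --- and not a truncated version of it --- is what remains, and that the carried-over term is correctly identified with $w(t^n)$ via $c_s=1$. Once these are settled, the corrector case reduces cleanly to the quadrature Lemma and the predictor case to a routine Taylor expansion, so I expect no further essential difficulty.
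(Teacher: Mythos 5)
Your proposal is correct. A direct comparison is limited because the paper does not actually write out a proof of La.~\ref{la:en}: it only states that the argument ``goes along the same lines as in'' \cite{SealSchuetz19}, i.e., the route hinted at in the preceding paragraph (plain second-order IMEX--Taylor for $k=0$, deferred-correction/SDC-type techniques for $k>0$). Your argument takes a more elementary and self-contained route, and for the corrector blocks it is in fact sharper than the stated bound: since $E^n$ is defined by inserting the exact solution into \emph{every} slot, and $W^n$ carries the identical values $w(t^{n-1}+c_l\dt)$ in all $\km+1$ blocks, the implicit stabilization differences cancel exactly, and with $c_s=1$ the carried-over entry of $AW^{n-1}$ is precisely $w(t^{n-1})$; what survives is the stage-quadrature residual
\begin{align*}
 w(t^{n-1}) + \II_l\bigl(\Phi(w(t^{n-1}+c_1\dt)),\ldots,\Phi(w(t^{n-1}+c_s\dt))\bigr) - w(t^{n-1}+c_l\dt),
\end{align*}
which by the stage order $q$ of the Hermite--Birkhoff quadrature is $\O(\dt^{q+1})$ \emph{uniformly in} $k\geq 1$, so the graded bound $\O(\dt^{\min\{2+k,q\}+1})$ holds a fortiori. (Your reading of the quadrature lemma on the subintervals $[t^n,t^n+c_l\dt]$ is the correct one; the integral bound in the paper's lemma should indeed be $t^n+c_l\dt$ rather than $t^{n+1}$.) This makes explicit something the paper leaves implicit: the $k$-graded structure in La.~\ref{la:en} is not a property of the consistency error itself but is merely what the error recursion of La.~\ref{la:epsil} consumes; the order buildup per correction comes from the coupling $\dt\,\eps_{k-1}^n$ there, not from a $k$-dependent truncation error. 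Your predictor case (Taylor expansion in $\tau=c_l\dt$, with the $\O(\tau^2)$ term of $\Phii(w(t^n+\tau))$ combining with the $-\tfrac{\tau^2}{2}\dPhii$ term to reconstruct $\tfrac{\tau^2}{2}w''$) matches the paper's remark that $k=0$ is straightforward second-order IMEX--Taylor, and your handling of the stage $l=1$ and of the recursion closure at $k=\km-1$ is correct, since both only ever read exact values that coincide with $w(t^{n-1})$.
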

\begin{proof}
The proof goes along the same lines as in \cite{SealSchuetz19} and is hence omitted. 

\qed
\end{proof}
The following lemma, although straightforward to prove, is of utmost importance to the convergence analysis. 
\begin{lemma}
 The recursion for $Z^n$ given in \eqref{eq:zn} can be explicitly written as: 
 \begin{align}
   \label{eq:znexact}
   \begin{split}
   Z^n = & \dt \sum_{j=0}^{n-1} A^j B_E \left(\Phie(Y^{n-j})-\Phie(W^{n-j})\right) 
      \\ &+ \dt \sum_{j=0}^{n-1} A^j B_I \left(\Phii(Y^{n-j})-\Phii(W^{n-j})\right) \\
      & + \frac{\dt^2}{2}   \sum_{j=0}^{n-1} A^j \tilde B_E \left(\dPhie(Y^{n-j})-\dPhie(W^{n-j})\right) 
      \\ &+ \frac{\dt^2}{2} \sum_{j=0}^{n-1} A^j \tilde B_I \left(\dPhii(Y^{n-j})-\dPhii(W^{n-j})\right)
    \\ &+ \sum_{j=0}^{n-1} A^j E^{n-j}. 
   \end{split}
 \end{align}
\end{lemma}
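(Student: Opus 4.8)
The plan is to read \eqref{eq:zn} as nothing more than a first-order linear recursion with constant coefficient matrix $A$ and a known forcing term, and then to unroll it. To this end I would first collect all the terms at level $n$ other than $A Z^{n-1}$ into a single vector
\[
 G^n := \dt B_E\left(\Phie(Y^{n})-\Phie(W^{n})\right) + \dt B_I\left(\Phii(Y^{n})-\Phii(W^{n})\right) + \frac{\dt^2}{2}\tilde B_E\left(\dPhie(Y^{n})-\dPhie(W^{n})\right) + \frac{\dt^2}{2}\tilde B_I\left(\dPhii(Y^{n})-\dPhii(W^{n})\right) + E^n,
\]
so that \eqref{eq:zn} becomes simply $Z^n = A Z^{n-1} + G^n$ for $n \geq 1$. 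With this abbreviation the claim \eqref{eq:znexact} reads $Z^n = \sum_{j=0}^{n-1} A^j G^{n-j}$; re-expanding $G^{n-j}$ into its five constituent pieces recovers the five sums displayed in the statement, so it suffices to prove this compact identity.

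I would establish it by induction on $n$. For the base case, note that by the definitions one has $W^0 := Y^0$, hence $Z^0 = Y^0 - W^0 = 0$; since the right-hand side of \eqref{eq:znexact} is an empty sum when $n=0$, the identity holds there (equivalently, for $n=1$ one gets $Z^1 = A Z^0 + G^1 = G^1 = A^0 G^1$). The only facts used here are the initial conditions seeding the solver and the matching definition of $W^0$.

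For the inductive step, assume $Z^{n-1} = \sum_{j=0}^{n-2} A^j G^{n-1-j}$. Substituting into $Z^n = A Z^{n-1} + G^n$ and using linearity of multiplication by the constant matrix $A$ gives
\[
 Z^n = A \sum_{j=0}^{n-2} A^j G^{n-1-j} + G^n = \sum_{j=0}^{n-2} A^{j+1} G^{n-1-j} + G^n .
\]
Re-indexing the first sum with $i = j+1$ turns it into $\sum_{i=1}^{n-1} A^i G^{n-i}$, and absorbing $G^n$ as the missing $i=0$ summand yields $Z^n = \sum_{i=0}^{n-1} A^i G^{n-i}$, which is exactly the claim. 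Expanding $G$ back out into its five terms completes the argument.

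I do not expect a genuine obstacle here: because $A$, $B_E$, $B_I$, $\tilde B_E$, $\tilde B_I$ are constant matrices, the recursion is exactly linear and the nonlinearity of the problem is entirely sequestered inside the differences $\Phie(Y^{n})-\Phie(W^{n})$, etc., which are merely carried along unchanged. The only points requiring care are the bookkeeping in the re-indexing and, more importantly, the initial condition $Z^0 = 0$ --- without the definition $W^0 := Y^0$ an extra $A^n Z^0$ term would survive and the closed form would not close. This is precisely why the lemma, though elementary, is of central importance to what follows: it converts the implicit one-step defect recursion into an \emph{explicit} sum in which the consistency errors $E^{n-j}$ appear separately, so that the decay estimate of Lemma~\ref{la:en} and the uniform bounds of Corollary~\ref{cor:boundedness} can subsequently be applied term by term.
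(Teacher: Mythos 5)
Your proof is correct and follows essentially the same route as the paper's: an induction over $n$ on the recursion \eqref{eq:zn}, crucially using that $Z^0 = Y^0 - W^0 = 0$ by the definition $W^0 := Y^0$. The paper merely asserts this argument in one line; your write-up fills in the bookkeeping (the grouping into $G^n$ and the re-indexing) without any deviation in substance.
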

\begin{proof}
This can be easily proved using \eqref{eq:zn} and an induction over $n$. Note that one has to take into account that $Z^0 = 0$ by definition. 

\qed
\end{proof}

In order to show convergence, we need the following result on the analytical solution to a linear recursion equation: 
\begin{lemma}\label{la:recursion}
 Consider the following linear recursion: 
 \begin{align} 
  \label{eq:recursion}
  x_n = \alpha (x_0 + \ldots + x_{n-1}) + \beta_n, \qquad n \geq 1,
 \end{align}
with $\alpha, \beta_n \in \R$ given for all $n \in \N$. (As indicated, $\beta_n$ is allowed to depend on $n$, while $\alpha$ is not.) Also $x_0$ is supposed to be known. 
The analytical solution to this recursion is given by
 \begin{align} 
  \label{eq:analytics}
  x_n = \alpha (\alpha+1)^{n-1}x_0 + \sum_{j=1}^{n-1} \beta_j \alpha (1+\alpha)^{n-j-1} + \beta_n.
 \end{align}
\end{lemma}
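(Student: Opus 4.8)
The plan is to reduce the given recursion to a standard first-order linear recursion by passing to partial sums. I would define $S_n := \sum_{i=0}^n x_i$ for $n \geq 0$, so that $S_0 = x_0$ and the recursion \eqref{eq:recursion} reads $x_n = \alpha S_{n-1} + \beta_n$ for $n \geq 1$. Since $S_n = S_{n-1} + x_n$, substituting this expression for $x_n$ yields the clean one-term recursion
\begin{align*}
S_n = (1+\alpha) S_{n-1} + \beta_n, \qquad n \geq 1,
\end{align*}
with initial value $S_0 = x_0$. This is the crux of the argument: the cumulative-sum substitution removes the awkward dependence on \emph{all} past iterates and replaces it with a dependence on the single previous partial sum, turning the problem into a textbook linear recursion.

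Next I would solve this first-order recursion in closed form. A routine induction (equivalently, the variation-of-constants formula for scalar linear recursions) gives
\begin{align*}
S_n = (1+\alpha)^n x_0 + \sum_{j=1}^n (1+\alpha)^{n-j} \beta_j, \qquad n \geq 0.
\end{align*}
The base case $n=0$ is immediate, and the inductive step follows by multiplying the hypothesis by $(1+\alpha)$ and adding $\beta_n$, after which the geometric factors line up and the sum extends by one term.

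Finally, I would recover $x_n$ from the identity $x_n = \alpha S_{n-1} + \beta_n$, valid for $n \geq 1$. Inserting the closed form for $S_{n-1}$ gives
\begin{align*}
x_n = \alpha (1+\alpha)^{n-1} x_0 + \alpha \sum_{j=1}^{n-1} (1+\alpha)^{n-1-j} \beta_j + \beta_n,
\end{align*}
which, after rewriting $(1+\alpha)^{n-1-j} = (1+\alpha)^{n-j-1}$ and reordering the factors inside the sum, is exactly \eqref{eq:analytics}. It is worth checking the endpoint $n=1$ separately: there the sum is empty and the formula collapses to $x_1 = \alpha x_0 + \beta_1$, in agreement with \eqref{eq:recursion}.

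As for difficulty, there is no genuine obstacle once the partial-sum trick is spotted; the only thing requiring care is the bookkeeping of summation indices and exponents, so that the shift from $S_{n-1}$ to $x_n$ produces precisely the exponent $n-j-1$ claimed in \eqref{eq:analytics} rather than an off-by-one variant. An alternative but more tedious route is a direct induction on the stated formula \eqref{eq:analytics}, which forces one to split off the $j=n-1$ term and re-sum at each step — exactly the index juggling that the partial-sum reformulation sidesteps.
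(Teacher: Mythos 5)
Your proof is correct, but it takes a genuinely different route from the paper. The paper proves the closed form \eqref{eq:analytics} directly by strong induction: it assumes the formula for all indices below $N$, substitutes into the recursion, and then evaluates the resulting terms by summing a geometric series and interchanging a double summation (exactly the ``index juggling'' you allude to at the end). Your argument instead passes to the partial sums $S_n = \sum_{i=0}^n x_i$, observes that they satisfy the one-term recursion $S_n = (1+\alpha)S_{n-1} + \beta_n$, solves that by a routine induction, and recovers $x_n = \alpha S_{n-1} + \beta_n$. The substitution trick buys you a structurally simpler induction (no double sums, no re-summation at each step) and, as a byproduct, an explicit formula for the partial sums themselves; the paper's approach buys directness, in that it verifies the stated formula without introducing an auxiliary sequence, at the cost of heavier bookkeeping. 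Both arguments are elementary and complete, and your endpoint check at $n=1$ matches the paper's base case.
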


\begin{proof} 
 We proceed by induction.  For $n=1$, we have $x_1 = \alpha x_0 + \beta_1$, which agrees with formula \eqref{eq:analytics}.
 Now assume that \eqref{eq:analytics} is correct for all $n < N$. Then, there holds: 
 \begin{align*} 
  x_N &= \alpha \sum_{n=0}^{N-1} x_n \\
  &= \alpha x_0 + \alpha \sum_{n=1}^{N-1} \left(\alpha (\alpha+1)^{n-1}x_0 + \sum_{j=1}^{n-1} \beta_j \alpha (1+\alpha)^{n-j-1} + \beta_n\right) + \beta_N \\
  &= \underbrace{\alpha x_0 + \alpha \sum_{n=1}^{N-1} \alpha (\alpha+1)^{n-1}x_0}_{=:I} + \underbrace{\alpha \sum_{n=1}^{N-1} \sum_{j=1}^{n-1} \beta_j \alpha (1+\alpha)^{n-j-1} + \alpha \sum_{n=1}^{N-1} \beta_n}_{=:II} + \beta_N. 
 \end{align*}
 We compute the terms separately: 
 \begin{align*} 
  I &= \alpha x_0 \left(1 + \sum_{n=0}^{N-2} \alpha (\alpha + 1)^n \right) = \alpha x_0 \left(1  + \alpha \frac{(\alpha+1)^{N-1}-1}{\alpha}\right) \\ &= \alpha x_0 (\alpha+1)^{N-1}.
 \end{align*}
 Now for $II$, there holds via a switching of the summation that
 \begin{align*} 
  II &= \alpha \sum_{n=1}^{N-1} \sum_{j=1}^{n-1} \beta_j \alpha (1+\alpha)^{n-j-1} + \alpha\sum_{n=1}^{N-1} \beta_n\\ 
     &= \alpha \sum_{j=1}^{N-2} \beta_j \sum_{n=j+1}^{N-1} \alpha (1+\alpha)^{n-j-1} + \alpha \sum_{n=1}^{N-1} \beta_n \\
     &= \alpha \sum_{j=1}^{N-2} \beta_j \left({(1+\alpha)^{N-j-1}}-1\right) + \alpha \sum_{n=1}^{N-1} \beta_n = \alpha \sum_{j=1}^{N-1} \beta_j (1 + \alpha)^{N-j-1}.
 \end{align*}
 All together, this shows that also $x_N$ can be formed according to \eqref{eq:analytics}. The statement is hence true. 

\qed
\end{proof}

Using the above results, we can immediately show that the method is second-order convergent: 
\begin{theorem}\label{thm:convergence}
 Given that $\dt$ is sufficiently small, and given the assumptions on the functions $\Phi$, $\Phii$ and $\Phie$ as done in Assumption \ref{ass:lipschitz}, there is a constant $C$ such that 
 \begin{align*} 
  Z^n \leq C \dt^2, 
 \end{align*}
 and hence, the method is second-order convergent. 
\end{theorem}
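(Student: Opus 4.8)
The plan is to take infinity-norms throughout the explicit error representation \eqref{eq:znexact} and reduce the estimate to the scalar linear recursion already solved in Lemma~\ref{la:recursion}. Writing $a_n := \|Z^n\|$, I would first bound the four ``flux-difference'' sums. By Corollary~\ref{cor:boundedness} the matrix products $\|A^j B_E\|$, $\|A^j B_I\|$, $\|A^j \tilde B_E\|$, $\|A^j \tilde B_I\|$ are all bounded by a single constant $C$, and by the Lipschitz continuity of Assumption~\ref{ass:lipschitz} each difference such as $\|\Phie(Y^{n-j})-\Phie(W^{n-j})\|$ is bounded by $L\|Z^{n-j}\|$. Reindexing with $m=n-j$ turns the four sums into $\alpha\sum_{m=1}^{n} a_m$, where $\alpha = 2CL\dt + CL\dt^2 = \O(\dt)$ collects the two first-order and two half-$\dt^2$ contributions.

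For the consistency-error term I would use the power-boundedness $\|A^j\|\le 1$ (from the preceding lemma) together with Lemma~\ref{la:en}. The slowest block is the predictor $k=0$, which is $\O(\dt^3)$, so $\|E^{n-j}\| = \O(\dt^3)$ uniformly, and the sum over the $n \le N = \Tend/\dt$ terms contributes at most $N\cdot\O(\dt^3) = \O(\dt^2)$. Collecting all contributions yields an inequality of the form $a_n \le \alpha \sum_{m=1}^{n} a_m + \beta_n$ with $\beta_n = \O(\dt^2)$.

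The key technical step is that this sum still contains $a_n$ itself (the $j=0$, i.e.\ $m=n$, term), so it does not yet fit Lemma~\ref{la:recursion}. For $\dt$ small enough that $\alpha<1$, I would move the $a_n$ term to the left and divide by $1-\alpha$, obtaining $a_n \le \tilde\alpha \sum_{m=0}^{n-1} a_m + \tilde\beta_n$ with $\tilde\alpha = \alpha/(1-\alpha) = \O(\dt)$ and $\tilde\beta_n = \beta_n/(1-\alpha) = \O(\dt^2)$; here the $m=0$ term is inserted for free since $a_0 = \|Z^0\| = 0$ by definition. Comparing $a_n$ against the majorizing equality recursion with the same $\tilde\alpha$ and the constant right-hand side $\tilde\beta := \max_m \tilde\beta_m$ (a simple induction shows $a_n$ is dominated by its solution), Lemma~\ref{la:recursion} supplies the closed form, which with vanishing initial data collapses to $\tilde\beta(1+\tilde\alpha)^{n-1}$.

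Finally, a Gronwall-type estimate closes the argument: since $\tilde\alpha \le K\dt$ for some constant $K$ and $n \le N = \Tend/\dt$, we have $(1+\tilde\alpha)^{n-1} \le (1+K\dt)^{N} \le e^{K\Tend}$, a bound independent of $\dt$. Hence $a_n \le \tilde\beta\, e^{K\Tend} = \O(\dt^2)$, which is the claimed second-order convergence. The main obstacle is precisely the self-referential $a_n$ appearing inside the sum; absorbing it requires $\dt$ small, after which the prefactor $1/(1-\alpha)$ stays bounded. Everything else is routine, and the uniform-in-$\dt$ control of $(1+\tilde\alpha)^{n-1}$ hinges on $\alpha$ scaling like $\dt$, so that $n\tilde\alpha$ remains $\O(1)$.
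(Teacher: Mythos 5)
Your proposal is correct and follows essentially the same route as the paper's own proof: taking infinity norms in \eqref{eq:znexact}, invoking Corollary~\ref{cor:boundedness} and the Lipschitz bounds, absorbing the self-referential $j=0$ term for small $\dt$, reducing to the recursion of Lemma~\ref{la:recursion} with the consistency term controlled by Lemma~\ref{la:en}, and closing with the $(1+C\dt)^n \le e^{C\Tend}$ estimate. The only (immaterial) difference is that you majorize the inhomogeneity by a constant $\tilde\beta$ before applying the closed form, whereas the paper keeps the $n$-dependent $\beta_j$ inside the formula and estimates afterwards.
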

\begin{proof}
 Note that due to Corollary \ref{cor:boundedness}, we can estimate terms like $\|A^j B_E\|$ against fixed constants. 
 Using the expression \eqref{eq:znexact}, we can hence compute 
 \begin{align*} 
  \|Z^n\| \leq &\ \dt L_{\Phie} \sum_{j=0}^{n-1}  \|A^j B_E\|  \|Z^{n-j}\| 
             + \dt L_{\Phii}   \sum_{j=0}^{n-1}  \|A^j B_I\| \|Z^{n-j}\| 
            \\&+ \frac{\dt^2}{2} L_{\dPhie}   \sum_{j=0}^{n-1}  \|A^j \tilde B_E\|\|Z^{n-j}\| 
             + \frac{\dt^2}{2} L_{\dPhii} \sum_{j=0}^{n-1}  \|A^j \tilde B_I\|  \|Z^{n-j}\| 
             \\&+ \sum_{j=0}^{n-1} \|A^j E^{n-j}\| \\
          \leq \ &C\dt \sum_{j=0}^{n-1}  \|Z^{n-j}\| + \sum_{j=0}^{n-1} \|E^{n-j}\|.
 \end{align*}
 Given that $C \dt$ is smaller than one, we get the inequality 
 \begin{align*} 
  \|Z^n\| \leq \ &C\dt \sum_{j=1}^{n-1}  \|Z^{n-j}\| + \sum_{j=0}^{n-1} \|E^{n-j}\|,
 \end{align*}
 with another constant $C$ that does not depend on $\dt$ or $N$. Note that summation of the first term begins at $j=1$.
 Now there holds that $\|Z^n\| \leq \eps_n$, with $\eps_0 = 0$ and $\eps_n$, $n > 0$, defined as 
 \begin{align*} 
  \eps_n = C \dt (\eps_0 + \ldots + \eps_{n-1}) + \sum_{j=0}^{n-1} \|E^{n-j}\|. 
 \end{align*}
 This is exactly of form \eqref{eq:recursion}, and has hence the analytical solution 
 \begin{align*} 
  \eps_n = \sum_{j=1}^{n-1} C \dt \left(\sum_{k=0}^{j-1} \|E^{n-j}\|\right) (1 + C\dt)^{n-j-1} + \sum_{j=0}^{n-1} \|E^{n-j}\|. 
 \end{align*}
 We can estimate $(1+C\dt)^{n-j-1}$ against $e^{C \Tend}$, and because of La. \ref{la:en}, there then holds that 
 \begin{align*} 
  \eps_n \leq C \dt^2.
 \end{align*}
 Note that the property $\sum_{k=0}^{j-1} \|E^{n-j}\|$ can be estimated against $C\dt^2$. Note also that constants are subject to change. They do however never depend on $N$ or $\dt$. 

\qed
\end{proof}

Second order convergence is of course not completely what we expect from Alg. \ref{alg:mdode}. In the following, we explore the nature of the higher orders in some more detail.

\begin{remark} 
 The structure of the matrix $A$ as given in \eqref{eq:A} is very interesting: it ``pushes elements in vectors downwards." To facilitate the understanding of the following analysis, we show different exponents of $A$. Obviously, $A^0 = \Id$, the identiy matrix, and $A^1$ is given in \eqref{eq:A}. There holds, always given that $\km$ is large enough:
 \begin{align*} 
  A^2 = \left( \begin{matrix} 
              \EE & 0 & 0 & 0 & 0 & \cdots & 0& 0 & 0 \\
              0 & 0 & 0 & \EE & 0 & \cdots & 0& 0 & 0 \\
              0 & 0 & 0 & 0 & \EE & \cdots & 0& 0 & 0 \\
              \vdots & \vdots & \vdots & \vdots & \vdots & \ddots &\vdots & \vdots & \vdots \\
              0 & 0 & 0 & 0 & 0 & \cdots & \EE  & 0 & 0 \\
              0 & 0 & 0 & 0 & 0 & \cdots & 0 & \EE & 0 \\
              0 & 0 & 0 & 0 & 0 & \cdots & 0& 0 & \EE \\
              0 & 0 & 0 & 0 & 0 & \cdots & 0& 0 & \EE \\
              0 & 0 & 0 & 0 & 0 &  \cdots& 0 & 0 & \EE
             \end{matrix} \right), \qquad 
  A^3 = \left( \begin{matrix} 
              \EE & 0 & 0 & 0 & 0 & \cdots & 0& 0 & 0 \\
              0 & 0 & 0 & 0 & \EE & \cdots & 0& 0 & 0 \\
              0 & 0 & 0 & 0 & 0 & \cdots & 0& 0 & 0 \\
              \vdots & \vdots & \vdots & \vdots & \vdots & \ddots & \vdots & \vdots \\
              0 & 0 & 0 & 0 & 0 & \cdots & \EE& 0 & 0 \\
              0 & 0 & 0 & 0 & 0 & \cdots & 0& \EE & 0 \\
              0 & 0 & 0 & 0 & 0 & \cdots & 0& 0 & \EE \\
              0 & 0 & 0 & 0 & 0 & \cdots & 0& 0 & \EE \\
              0 & 0 & 0 & 0 & 0 & \cdots & 0& 0 & \EE \\
              0 & 0 & 0 & 0 & 0 &  \cdots & 0& 0 & \EE
             \end{matrix} \right), \qquad \ldots              
 \end{align*}
 Eventually, there is a limit matrix $A^{\infty}$, given as 
 \begin{align*} 
 A^{\infty} = \left( \begin{matrix} 
              \EE & 0 & 0 & 0 & 0 & \cdots & 0& 0 & 0 \\
              0 & 0 & 0 & 0 & 0 & \cdots & 0& 0 & \EE \\
              0 & 0 & 0 & 0 & 0 & \cdots & 0& 0 & \EE \\
              \vdots & \vdots & \vdots & \vdots & \vdots & \ddots & \vdots & \vdots \\
              0 & 0 & 0 & 0 & 0 & \cdots & 0& 0 & \EE \\
              0 & 0 & 0 & 0 & 0 & \cdots & 0& 0 & \EE \\
              0 & 0 & 0 & 0 & 0 & \cdots & 0& 0 & \EE \\
              0 & 0 & 0 & 0 & 0 & \cdots & 0& 0 & \EE \\
              0 & 0 & 0 & 0 & 0 & \cdots & 0& 0 & \EE \\
              0 & 0 & 0 & 0 & 0 &  \cdots & 0& 0 & \EE
             \end{matrix} \right).
 \end{align*}
 More formally, we have $A^j = A^{\infty}$ for all $j \geq \km -1$. This structure is important, because it significantly simplifies the convergence analysis. Essentially, it states that most contributions to the overall error come from higher corrections. Intuitively, this is desirable, because higher corrections are supposed to have smaller error contributions, at least asymptotically. 
\end{remark}

\begin{lemma}\label{la:epsil}
Subdivide $Z^n$ into $\km+1$ blocks of size $s$ each, similarly to La. \ref{la:en}. 
Define $\eps_k^n$, $0 \leq k \leq \km$ to be the infinity-norm of the $k-$th block. For $n = 0$, there is no error, and hence
\begin{align*} 
 \eps_k^0 = 0, \qquad 0 \leq k \leq \km. 
\end{align*}
Furthermore, for $k = 0$, the underlying consistency analysis is trivial, and hence $\eps_0^n$ can be bound by a positive constant $C$ times $\dt^2$, i.e., 
\begin{align*} 
 \eps_0^n \leq C \dt^2, \qquad 1 \leq n \leq N.
\end{align*}
The constant $C$ depends of course on the analytical solution, the fluxes $\Phi$, the initial conditions $w_0$ and the like, but it does not depend on $\dt$ or $n$. 
Furthermore, there is a constant $C$ such that there holds for $1 \leq k < \km$ and $1 \leq n \leq N$:
\begin{alignat}{2}
 \label{eq:epsnk}
 \eps^n_k    &\leq C \dt \left( \eps_k^n + \eps_{k-1}^n\right) + C \dt \sum_{j=1}^{n-1} \sum_{h=k}^{\km} \eps_h^{n-j} + C\dt^{\min\{2+k,q\}}, \\
 \label{eq:epsnkmax}
 \eps^n_{\km}&\leq C \dt \sum_{j=0}^{n-1} \left(\eps_{\km-1}^{n-j} + \eps_{\km}^{n-j}\right) + C\dt^{\min\{2+\km,q\}}.
\end{alignat}
\end{lemma}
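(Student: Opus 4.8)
The plan is to read off the $k$-th block of the closed-form error representation \eqref{eq:znexact} and to estimate it term by term, using two structural facts: the ``downward push'' of $A^j$ described in the remark preceding the lemma, and the block-sparsity of the coefficient matrices $B_E,B_I,\tilde B_E,\tilde B_I$ inherited from Algorithm \ref{alg:mdode}. The block $k=0$ is produced solely by the predictor \eqref{eq:predict}, a fixed second-order IMEX--Taylor step, and is handled separately: its bound $\eps_0^n\le C\dt^2$ follows exactly as in the proof of Theorem \ref{thm:convergence}, with $E^{n,[0]}=\O(\dt^3)$ supplied by Lemma \ref{la:en}. Crucially, the first block row of $A$ is self-referential, so the predictor's explicit previous-step evaluations stay confined to block $0$ and never pollute the corrections, which isolates the genuine work to the range $1\le k\le\km$.

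For those blocks I would first record the block-sparsity: reading \eqref{eq:correct}, the row producing block $k'\ge1$ couples only to iteration $k'$ (the implicit diagonal contribution of $\Phii,\dPhii$) and to iteration $k'-1$ (the quadrature $\II$ together with the subtracted $\Phii,\dPhii$). Hence $B_I,\tilde B_I$ are block-bidiagonal and $B_E,\tilde B_E$ are block-subdiagonal. Combined with the fact, established in the preceding remark, that the $k$-th block row of $A^j$ is a single copy of $\EE$ in block column $m(j):=\min\{k+j,\km\}$ (and the identity for $j=0$), every product $(A^jB_\bullet v)_{[k]}$ reduces to $\EE$ acting on the blocks $m(j)-1$ and $m(j)$ of $v$. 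Since $\|\EE\|=1$, the nonzero sub-blocks are uniformly bounded by Corollary \ref{cor:boundedness}, and $\Phie,\Phii$ and their derivatives are blockwise Lipschitz (Assumption \ref{ass:lipschitz}), each such product is at most $C\dt(\eps_{m(j)-1}^{n-j}+\eps_{m(j)}^{n-j})$, with $\dt^2$ replacing $\dt$ for the tilde terms. The $j=0$ term has $m(0)=k$ and yields the local part $C\dt(\eps_k^n+\eps_{k-1}^n)$; for $j\ge1$ one has $m(j)\ge k+1$, so the indices lie in $\{k,\dots,\km\}$ and give the history part $C\dt\sum_{j=1}^{n-1}\sum_{h=k}^{\km}\eps_h^{n-j}$.

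It remains to bound the consistency contribution $\sum_{j=0}^{n-1}(A^jE^{n-j})_{[k]}$. The $j=0$ term equals $E^{n,[k]}=\O(\dt^{\min\{2+k,q\}+1})$ by Lemma \ref{la:en}, while each $j\ge1$ term evaluates $E^{n-j,[m(j)]}$ with $m(j)\ge k+1$, hence is $\O(\dt^{\min\{2+k+1,q\}+1})$ or smaller; summing at most $N=\Tend/\dt$ such terms loses precisely one power of $\dt$ and leaves $\O(\dt^{\min\{2+k,q\}})$, which together with the previous pieces establishes \eqref{eq:epsnk}. For $k=\km$ the computation is the same with one simplification: the last block row of $A^j$ saturates immediately, $m(j)=\min\{\km+j,\km\}=\km$ for every $j\ge0$, so all terms (including $j=0$) couple only to blocks $\km-1$ and $\km$, producing the single sum $C\dt\sum_{j=0}^{n-1}(\eps_{\km-1}^{n-j}+\eps_{\km}^{n-j})$, and each consistency block $E^{n-j,[\km]}=\O(\dt^{\min\{2+\km,q\}+1})$ sums to $C\dt^{\min\{2+\km,q\}}$, giving \eqref{eq:epsnkmax}.

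I expect the main obstacle to be the index bookkeeping in the consistency sum: one must verify that enlarging $\{m(j)-1,m(j)\}$ to the full tail $\{k,\dots,\km\}$ is harmless and, above all, that the saturated, $\O(N)$-many terms do not spoil the one-power loss — that is, that the worst exponent $\min\{2+\km,q\}$ arising from saturation still dominates the claimed $\dt^{\min\{2+k,q\}}$, which is exactly the monotonicity $\min\{2+\km,q\}\ge\min\{2+k,q\}$. Establishing the precise block-sparsity of the four coefficient matrices from \eqref{eq:correct}, and confirming that the predictor's explicit previous-step terms remain quarantined in block $0$, is the other place where care is needed.
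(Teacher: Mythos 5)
Your proposal is correct and follows essentially the same route as the paper's proof: both start from the closed-form representation \eqref{eq:znexact}, exploit the ``downward-push'' structure of $A^j$ together with the block-bidiagonal/subdiagonal sparsity of $B_I,\tilde B_I,B_E,\tilde B_E$, split off the $j=0$ term to obtain $C\dt(\eps_k^n+\eps_{k-1}^n)$, bound the $j\ge 1$ terms generously by $C\dt\sum_{j=1}^{n-1}\sum_{h=k}^{\km}\eps_h^{n-j}$ (collapsing to blocks $\km-1,\km$ when $k=\km$), and control the consistency sum via Lemma \ref{la:en}, losing one power of $\dt$ over the $N$ terms. Your explicit bookkeeping with $m(j)=\min\{k+j,\km\}$ and the blockwise sparsity derivation is in fact slightly more precise than the paper's terser wording, but it is the same argument.
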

\begin{remark} Before proving this lemma, we point out a couple of salient features:
\begin{itemize}
 \item Note that $\eps^n_{\km}$ has a global (i.e., over \emph{all} $n$) dependence on $\eps^n_{\km-1}$, while all the other $\eps^n_k$ do not have this kind of dependence on $\eps^n_{k-1}$. 
 \item We have chosen to take the same constant $C$ for both the expression of $\eps_0^n$ as well as for all the terms in the inequality for $\eps_k^n$. This can be done without loss of generality, as we take the largest of all these constants. 
\end{itemize}
 
\end{remark}
\begin{proof}
To prove \eqref{eq:epsnk}, we consider the analytical expression of $Z^n$ given in \eqref{eq:znexact}. Let us first treat the last term in \eqref{eq:znexact}, $\sum_{j=0}^{n-1} A^j E^{n-j}$. Due to La. \ref{la:en}, we know that there is a constant $C$ such that $E^{n-j,[k]} \leq C \dt^{\min\{2+k,q\}+1}$. There holds 
\begin{align*} 
 \left(\sum_{j=0}^{n-1} A^j E^{n-j}\right)^{[k]} \leq N C \dt^{\min\{2+k,q\}+1} \leq \Tend C \dt^{\min\{2+k,q\}}.
\end{align*}
By $(\cdot)^{[k]}$, we denote the $k-$the block of a vector, again, $0 \leq k \leq \km$. We have used the fact that $\dt = \frac{\Tend}{N}$ and that $A^j$ ``pushes elements downwards," so there will be no $\dt$ contributions of lower orders than the ones given here. This explains why we carefully chose the red terms in Alg. \ref{alg:mdode} the way we did.

In a similar way, we treat the other terms. They can all be handled alike, so we only consider the term 
\begin{align*} 
 &\dt \sum_{j=0}^{n-1} A^j B_I \left(\Phii(Y^{n-j}) - \Phii(W^{n-j)}\right) \\ = &\dt B_I \left(\Phii(Y^{n}) - \Phii(W^{n)}\right) + \dt \sum_{j=1}^{n-1} A^j B_I\left(\Phii(Y^{n-j}) - \Phii(W^{n-j)}\right).
\end{align*}
Note that the first term couples errors at correction level $k-1$ to those of $k$. Estimated, this gives, independent of whether $k=\km$ or not, a contribution of
\begin{align*} 
 C \dt \left(\eps_k^n + \eps_{k-1}^n\right).
\end{align*}
For $k < \km$, due to the peculiar structure of $A^j$, $A^jB_I$ remains a block-matrix; on the $k-$th block-row, only two blocks are filled.\footnote{Let us clarify what we mean by block-rows and block-columns: The matrices we are dealing with here are of size $s \cdot (\km+1) \times s \cdot (\km+1)$. They can hence be subdivided into $(\km+1)^2$ blocks of size $s\times s$. According to the notation in Alg. \ref{alg:mdode}, we begin counting by $k=0$. Block-row $k$ means hence in the big matrix the rows from $k(s+1) + 1$ to $(k+1)(s+1)$.}
The block-columns thereof are $j$ and $j+1$, with $j \geq k$. A generous estimate yields the contribution 
\begin{align*} 
 C \dt \sum_{j=1}^{n-1} \sum_{h=k}^{\km} \eps_h^{n-j}. 
\end{align*}
(In fact, for $j \geq \km -1$, the block columns are $\km-1$ and $\km$, so sharper estimates could be used.) 
For $k=\km$, the non-zero block-columns are $\km-1$ and $\km$. This then gives the contribution 
\begin{align*}
 C \dt \sum_{j=1}^{n-1} \left(\eps_{\km-1}^{n-j} + \eps_{\km}^{n-j}\right).
\end{align*}
%
%
%
%
%
%
This yields the desired result.

\qed
\end{proof}

We do not have a proof for the following statement, which is why we formulate it as a proposition. 
\begin{proposition}
\label{hyp:order}
From the results in La.~\ref{la:epsil}, we conjecture that the method is convergent with the $k-$dependent orders.  That is, we postulate that each of the correction steps satisfy
\begin{align*}
  e^n_k &= \O\left(\dt^{\min\{2+k,q\}}\right),   &\quad k &< \km,
\end{align*}
and that the final correction step satisfies
\begin{align*}
	e^n_{\km} &= \O\left(\dt^{\min\{1+\km,q\}}\right), &\quad k &= \km. 
\end{align*}
\end{proposition}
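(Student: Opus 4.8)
The plan is to turn the two families of recursive inequalities \eqref{eq:epsnk}--\eqref{eq:epsnkmax} into genuine bounds by a discrete Gronwall argument, carried out as an induction on the time level $n$ with an exponential weight. Concretely, I would posit the ansatz $\eps_k^n \leq D_k (1+\lambda\dt)^n \dt^{p_k}$, with $p_k := \min\{2+k,q\}$ for $k<\km$ and $p_{\km}:=\min\{1+\km,q\}$, for constants $D_k$ and a weight $\lambda$ to be fixed, and prove it by induction on $n$ (the case $n=0$ being trivial since $\eps_k^0=0$). Since $(1+\lambda\dt)^n \leq e^{\lambda\Tend}$, this immediately yields the claimed orders $\eps_k^n = \O(\dt^{p_k})$ with $n$-independent constants $C_k := D_k e^{\lambda\Tend}$, and in particular the one-order-lower exponent $\min\{1+\km,q\}$ at $k=\km$, which is forced by the identity $p_{\km}=p_{\km-1}$: the last correction merely shares its order with the penultimate one.

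Within the inductive step I would process the correction levels in ascending order $k=0,1,\ldots,\km$, which is exactly what the dependency structure of the recursions permits, since the only current-time coupling is the lower-triangular one $\eps_k^n \leftarrow \eps_{k-1}^n$, all higher-level contributions sitting at strictly earlier times. The base level $k=0$ is given outright ($\eps_0^n\leq C\dt^2$, so $D_0=C$). For $k\geq1$ I insert the hypotheses into \eqref{eq:epsnk}/\eqref{eq:epsnkmax} and sort the right-hand side by $\dt$-order: the explicit local term and the current-time lower-level term $\eps_{k-1}^n$ both sit at order exactly $p_k$ (using $p_{k-1}+1\geq p_k$); the current-time self-term $C\dt\,\eps_k^n$ is absorbed on the left for $\dt$ small; the earlier-time self-term $C\dt\sum_{j\geq1}\eps_k^{n-j}$ produces, via $\sum_m \dt(1+\lambda\dt)^m \leq (1+\lambda\dt)^n/\lambda$, the Gronwall contribution $\tfrac{C}{\lambda}D_k(1+\lambda\dt)^n\dt^{p_k}$; and the earlier-time higher-level terms $\eps_h^{n-j}$, $h>k$, carry an extra power of $\dt$ (hence are absorbable) precisely when the orders are strictly separated. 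Collecting everything produces a finite system of constant inequalities $D \geq G(\lambda)\,D + f$, where the forcing $f$ depends only on lower-level constants and $G(\lambda)$ is a nonnegative gain matrix whose surviving entries are $\O(1/\lambda)$.

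The one place where ascending elimination is not enough --- and, I expect, the crux of the whole argument --- is where several correction levels share the same order and therefore couple bidirectionally at earlier times. This happens at the top pair $(\km-1,\km)$, since $p_{\km}=p_{\km-1}$, and throughout any saturated block on which $p_k=q$; there the higher-level contributions are not $\dt$-suppressed and $G(\lambda)$ acquires genuine off-diagonal mass linking each such level to the others. The resolution is to choose $\lambda$ large enough that $\rho(G(\lambda))<1$ (possible because every such entry is $\O(1/\lambda)$ and the blocks are of fixed, finite size for fixed $\km,q$); then $I-G(\lambda)$ is invertible with nonnegative inverse, and the finite system has a nonnegative solution $\{D_k\}$, obtained by ascending recursion on the strictly-separated levels and by solving the coupled block simultaneously. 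This is the vector analogue of the scalar recursion solved in La.~\ref{la:recursion}, to which the strictly-separated levels reduce directly. I would single out \eqref{eq:epsnkmax}'s global (sum from $j=0$) dependence of $\eps_{\km}$ on $\eps_{\km-1}$ as the mechanism that pins $\eps_{\km}$ to order $p_{\km-1}=\min\{1+\km,q\}$ rather than gaining a further order.

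Two honest caveats on the main obstacle. First, the contraction $\rho(G(\lambda))<1$ must be established with constants that are uniform in $n$ and $\dt$ (they may, and will, depend on $\km,q,\Tend$ and the Lipschitz data); checking that a single weight $\lambda$ can be fixed once and for all without the block constants blowing up is the delicate bookkeeping at the heart of the proof. Second, this all rests on the recursions \eqref{eq:epsnk}--\eqref{eq:epsnkmax}, which La.~\ref{la:epsil} derived through admittedly ``generous'' estimates; I would first want to sharpen those estimates enough to guarantee that the equal-order entries of $G(\lambda)$ really are $\O(1/\lambda)$, since a lossy estimate could spuriously enlarge the equal-order coupling and defeat the contraction --- which may well be why the statement is currently only conjectured.
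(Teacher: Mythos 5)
There is no paper proof to compare against here: the authors state explicitly, immediately before the Proposition, that they do not have a proof of it, and everything the paper offers in support is numerical evidence plus a heuristic remark (\emph{assuming} the orders are non-decreasing in $k$, the coupling $\dt\sum_{j=1}^{n-1}\sum_{h=k}^{\km}\eps_h^{n-j}$ scales as $\O(\eps_k^n)$, the term $\dt\,\eps_{k-1}^n$ scales as $\eps_k^n$ when consecutive orders differ by one, and the forcing $\dt^{\min\{2+k,q\}}$ then dictates the order). Your proposal is therefore not a variant of the paper's argument but an attempt to supply the missing proof, and as far as I can check its skeleton closes. With the weighted ansatz $\eps_k^n\leq D_k(1+\lambda\dt)^n\dt^{p_k}$ (where $p_k:=\min\{2+k,q\}$ for $k<\km$ and $p_{\km}:=\min\{1+\km,q\}$), the current-time terms are handled exactly as you say (absorption of $C\dt\,\eps_k^n$, ascending sweep for $\eps_{k-1}^n$ using $p_{k-1}+1\geq p_k$), and every earlier-time coupling in \eqref{eq:epsnk}--\eqref{eq:epsnkmax} carries the per-step weight $C\dt$, so the geometric bound $\sum_{j\geq1}(1+\lambda\dt)^{-j}\leq(\lambda\dt)^{-1}$ turns each of its gains into $C/\lambda$. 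The resulting constant system $D\geq G(\lambda)D+f$ then has a strictly lower-triangular $\O(1)$ part (nilpotent) plus a remainder of size $\O(1/\lambda)$, so $\rho(G(\lambda))<1$ once $\lambda$ exceeds a threshold depending only on $C$ and $\km$, and $D=(I-G(\lambda))^{-1}f\geq 0$ closes the induction, giving $\eps_k^n\leq D_k e^{\lambda\Tend}\dt^{p_k}$. Your reading of why $k=\km$ loses an order is also the right one: \eqref{eq:epsnkmax} couples $\eps_{\km}$ to $\eps_{\km-1}$ through a sum over \emph{all} earlier time levels, which contributes at order $p_{\km-1}=\min\{1+\km,q\}$ and cannot be beaten within these bounds.

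One correction: your second caveat is unfounded, and you should not let it stall the write-up. The ``generous'' estimates in La.~\ref{la:epsil} affect which levels couple and the size of the constant $C$ --- which the lemma asserts is independent of $n$ and $\dt$ --- but not the $\lambda$-scaling of the gains: every earlier-time entry of $G(\lambda)$ is $C/\lambda$ no matter how lossy the lemma's derivation was, so the contraction can always be enforced by enlarging $\lambda$; what the generosity costs is sharpness of the constants $D_k$, not the orders. The only genuine remaining work is the finite bookkeeping in your first caveat, and that does go through along the lines you describe (a single $\lambda$, fixed once in terms of $C$ and $\km$, suffices, with $\dt$ small enough that $C\dt\leq 1/2$). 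If you write this out carefully, you will have proved something the paper deliberately leaves as a conjecture; the paper's own rigorous convergence result stops at second order (Thm.~\ref{thm:convergence}).
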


\begin{remark}
 Please note that the last correction step does not increase the order. This is different to the behavior of the algorithm in~\cite{SealSchuetz19}; it is a consequence of the fact that we have modified the values at time level $t^n$ to make the algorithm ready for parallelism. 
\end{remark}

\begin{remark}
 Although we only present numerical results that support Proposition~\ref{hyp:order} -- and we only know from Thm. \ref{thm:convergence} that the method is at-least second order convergent -- we can very well see the underlying structure from Eqs.~\eqref{eq:epsnk}-\eqref{eq:epsnkmax}: For $k < \km$, $\eps_k^n$ essentially possesses the contributions $\dt \eps_{k-1}^n$, $\dt \sum_{j=1}^{n-1}\sum_{h=k}^{\km} \eps_h^{n-j}$ and $\dt^{\min\{2+k,q\}}$. (We have neglected the constants here.) If we assume that for increasing $k$, the order in $\dt$ of $\eps_k^n$ would also increase or at least not decrease, the contribution $\dt \sum_{j=1}^{n-1}\sum_{h=k}^{\km} \eps_h^{n-j}$ scales as $\O(\eps_k^n)$. Furthermore, if the order between $\eps_{k}^n$ and $\eps_{k-1}^n$ differs by one, then $\dt \eps_{k-1}^n$ scales again as $\eps_k^n$. The final constant term $\dt^{\min\{2+k,q\}}$ then gives the overall order. 
\end{remark}

\section{Parallel implementation}\label{sec:ParallelStrategy}

The method presented in Alg.~\ref{alg:mdode} has been carefully designed in such a way that it can be implemented in a parallel setting. The ideas we use are similar to the ones shown in~\cite{ChriOng10} -- and in fact, the much older ideas presented in~\cite{MirankerLiniger1967} -- and are roughly based on the fact that dependencies are chosen in such a way that pipelining becomes possible. The following observations are key to being able to parallelize this solver: 
\begin{itemize} 
 \item The predictor, i.e., the values $\algs w n 0 l$ that correspond to zeroth iteration number $k = 0$, do \emph{not} depend on the values of any other correction step.  They only depend on the final value $\algs w {n-1} 0 s$ obtained by the \emph{predictor} in the previous time step. 
 
 \item For $1 \leq k < \km$, the $(k)-$th corrector step, i.e., the values $\algs w n k l$, depend on the $(k-1)$-st corrector step at time level $n$, as well as on the next correction at the previous time step, $\algs w {n-1} {k+1} s$. 
 
 \item The $(\km)$-th corrector step, $\algs w n \km l$ depends only on the $(\km-1)$-st corrector step at time level $n$ as well as the final correction at the previous time level, $\algs w {n-1} {\km} s$.
\end{itemize}

These dependencies are illustrated in more detail on the left hand side of Fig.~\ref{fig:timeparallel}.  On the $x-$axis, the time instances $n$, $n+1, \ldots$, are indicated, while on the $y-$axis, the correction iterates $k$ and the predictor ($k=0$) are sketched. The circles at position $(n, k)$ correspond to the computation of $\algs w n k l$ for all $1 \leq l \leq s$. Circles with the same number on it can be computed in parallel, while those with a higher number have to wait for those with a lower number to finish. The arrows indicate direct dependencies, required for the calculation of $\algs{w}{n}{k}{l}$. 
Given the dependencies, it also makes sense to always group the correction iterates $2k$ and $2k+1$ on the same process, since one would otherwise create idle processor time. In addition, in order to use computational resources as efficiently as possible, we also do this for the predictor, so the predictor $(k=0)$ is grouped together with the first correction iterate $k=1$, which would strictly speaking not be necessary. 
Red arrows then imply communication over processes. Note that communication is always unidirectional. For comparison, the dependencies of the original serial algorithm from~\cite{SealSchuetz19} are visualized in the center of Fig.~\ref{fig:timeparallel}. One can clearly see that without the modifications proposed in Alg.~\ref{alg:mdode}, the predictor and all correction steps depend on the $(\km)$-th iteration at time level $n-1$.
\begin{figure}
	\centering
	\setlength{\tabcolsep}{-0.5em}
	\begin{tabular}{ccc}
	  \scriptsize HO-Parallel & \scriptsize Original & \scriptsize LO-Parallel\\[-0.5em]
  	\includegraphics[width=0.33\textwidth]{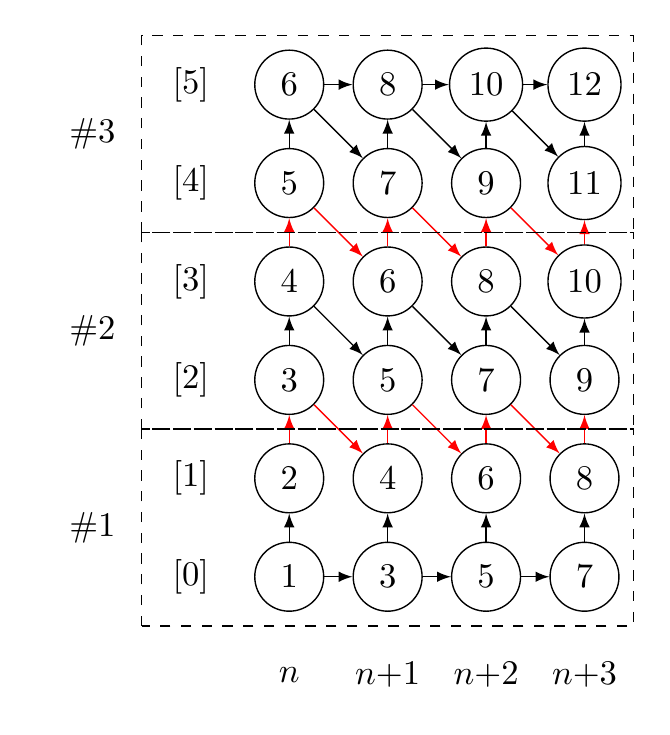}&
  	\includegraphics[width=0.33\textwidth]{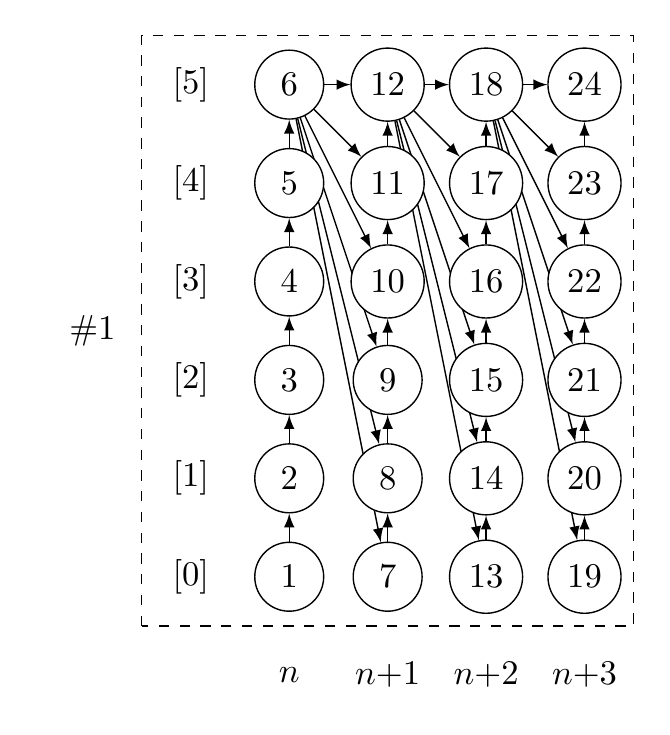}&
  	\includegraphics[width=0.33\textwidth]{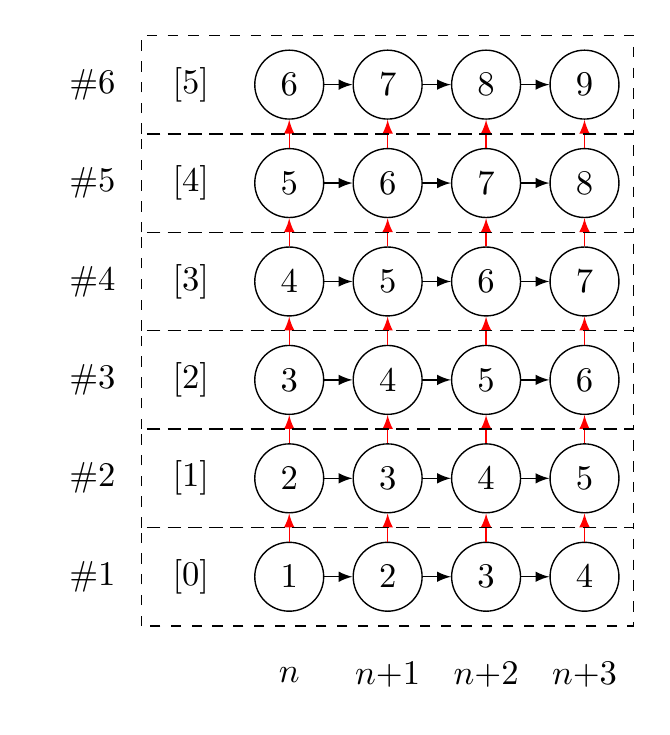}
	\end{tabular}
	\caption{Detailed view on the dependencies of the parallelization flavors. Here we include an illustrative example with a total of four time steps and $\km = 5$ total iterations.
	On the $x-$axis, time instances $n$, $n+1, \ldots$ are indicated, while on the $y-$axis, the correction iterates $k$ and the predictor ($k=0$) are sketched. The circles at position $(n, k)$ correspond to the computation of $\algs w n k l$ for all $1 \leq l \leq s$. Circles with the same number on it can be computed in parallel, while those with a higher number have to wait for those with a lower number to finish. The arrows indicate direct dependencies required for the calculation of $\algs{w}{n}{k}{l}$, with red arrows indicating that communication between processors is needed.
	Left: Higher order parallel-in-time MD scheme. 
	Middle: Original serial algorithm proposed in~\cite{SealSchuetz19}. Right: Low order parallel-in-time MD scheme, enabling the use of more processors. }\label{fig:timeparallel}
\end{figure}

The High-Order Parallel (HO-Parallel) version is designed to deliver parallel speedup while retaining high-order accuracy. This is the method described in Alg.~\ref{alg:mdode}, but here we describe one such grouping of processors that yields a parallel solver.  Theoretically, this speedup can be estimated as follows:
Denote the number of timesteps again by $N$. The unparallelized algorithm has to perform $N \cdot (\km+1)$ operation blocks (the computation of $\algs w n k l$ for given $n$, $k$ and for all $1 \leq l \leq s$). Each process in the parallelized version has to perform $2N$ operations, and the last process has to wait $\km - 1$ cycles before it can start. This gives, neglecting all other sources of imbalance and overhead such as communication and the like, a maximum theoretical speedup of
\begin{equation}
\label{eq:alg1-scaling}
	\frac{N \cdot (\km+1)}{2N + \km -1} \rightarrow \frac{\km+1}{2}, \qquad N \rightarrow \infty.
\end{equation}

Please note that this is the speedup in comparison to letting the algorithm run in its unparallelized version, it is not the theoretical speedup in comparison to the underlying Runge-Kutta method, as is done, e.g., in~\cite{EmmettMinion12}. This speedup is harder to determine, in particular in the IMEX setting that we follow here, because it depends on the convergence of the correction iterates as well as on the (nonlinear) algebraic solves, which can differ tremendously from the fully implicit Runge-Kutta method and the  semi-implicit correction iterates used here.

Grouping the processors has the obvious disadvantage that it reduces the parallelization capabilities, and therefore if attaining parallel speed is of chief concern, we suggest the Low-Order Parallel (LO-Parallel) method described in the right frame of Fig.~\ref{fig:timeparallel}.  In short, the basic idea is to dedicate an entire process to each correction step and let each correction run completely independent from the other corrections insofar as they need to wait for the previous correction to finish in a queue like fashion.  That is, the \textbf{LO-Parallel} version changes the red terms in Alg.~\ref{alg:mdode} from $\algs w {n-1} {k+2} s$ to $\algs w {n-1}{k+1} s$:
\begin{align}
\begin{split}
\label{eq:lo-parallel}
\algs w {n} {k+1} 1 &:= {\algs w {n-1} {k+1} s}, \\
\algs w {n} {k+1} l &:= {\algs w {n-1} {k+1} s}  \\ 
&
+ {\dt} \left( \algs {\Phii} {n} {k+1} l - \algs {\Phii} n {k} l\right)
- \frac{\dt^2}{2} \left(\algs {\dPhii} {n} {k+1} l- \algs {\dPhii} {n} {k} l\right)
\\ & + \II_l(\algs \Phi{n}{k} 0, \algs \Phi n {k} 1, \ldots, \algs \Phi n {k} s).
\end{split}
\end{align}
This means that on correction iterate $k+1$, we only take the previous $(k)-$th correction iterate into account, and not as in the novel algorithm, the $(k+2)$-nd iterate.  Provided we lag each of the corrections sufficiently through a startup procedure, then each of the correctors can follow up and clean up the previous iterate at the same cost of the predictor.  In comparison to the serial evaluation of Alg.~\ref{alg:mdode}, the speedup would be 
\begin{equation}
\frac{N \cdot (\km+1)}{N+(\km+1)} \to \km+1, \quad N \to \infty
\end{equation}
under ideal circumstances.  Unfortunately, this modfification reduces the overall method to a second-order method, and therefore we do not recommend this LO-Parallel strategy unless parallelization is of utmost importance.

\section{Numerical results}\label{sec:NumRes}
In this section, we first experimentally validate the theoretical findings of Sec.~\ref{sec:ConvergenceAnalysis} with sample scalar ODE test cases. Here, we show that the desired orders of accuracy are reached after the predictor and each of the correction steps. We then increase the complexity of our test cases to systems and stiff problems. There we study the influence of choosing one of the algorithms presented in Fig.~\ref{fig:timeparallel}. It is shown that the modifications of the original algorithm from~\cite{SealSchuetz19} described in Alg.~\ref{alg:mdode} only have a small influence on the obtained solution. Finally, it is shown that the proposed method converges to the limiting Runge-Kutta method given in Def.~\ref{def:RKlimit} even for very stiff problems.

\subsection{ODE for Convergence Testing}
\pgfplotstableset{col sep=comma}
We start by considering a scalar model problem
\begin{align}\label{eq:ODE_conv}
	w'(t) = -w^{-\frac{5}{2}}, \qquad w_0 = 1,
\end{align}
to validate the order of convergence of the introduced methods.
The analytical solution of this ODE is $w(t)=\left(-\frac{7}{2}t+w_0^{7/2}\right)^{2/7}$; the error is evaluated at $\Tend=0.25$. For this test problem we introduce an artificial IMEX splitting via 
\begin{align*}
  \Phie(w)=\alpha w'(t), \qquad \Phii(w)=(1-\alpha) w'(t), 
\end{align*}
for $\alpha = 0.2$. 

\begin{figure}
	\centering
	\begin{tabular}{cc}
	    \ifthenelse{\boolean{compilefromscratch}}
	    {
            \tikzsetnextfilename{rkrk4_kmax4_convtest_IMEX_eps1}
            \begin{tikzpicture}[scale=0.65]
            \begin{loglogaxis}[cycle list name=green,xlabel={$\Delta t$},ylabel={$\|w-w_h\|_2$} ,grid=major,legend style={at={(0.98,0.02)},anchor= south east,font=\footnotesize},title={$\method{4}{3}$},label style={font=\large},title style={font=\large},legend cell align={left},ymin=1e-16,ymax=1e-1]
            \addplot table[skip first n=1,x expr={10.5/\thisrowno{0}}, y expr={\thisrowno{1}}] {../Code/results/results_imexmd_parallel_rkrk4_kmax4_convtest_IMEX_eps1.csv};
            \addplot table[skip first n=1,x expr={10.5/\thisrowno{0}}, y expr={\thisrowno{2}}] {../Code/results/results_imexmd_parallel_rkrk4_kmax4_convtest_IMEX_eps1.csv};
            \addplot table[skip first n=1,x expr={10.5/\thisrowno{0}}, y expr={\thisrowno{3}}] {../Code/results/results_imexmd_parallel_rkrk4_kmax4_convtest_IMEX_eps1.csv};
            \addplot table[skip first n=1,x expr={10.5/\thisrowno{0}}, y expr={\thisrowno{4}}] {../Code/results/results_imexmd_parallel_rkrk4_kmax4_convtest_IMEX_eps1.csv};
            \addplot[color=black,thick,fill = white,forget plot] coordinates {(6e-2,3.0e-5) (6e-2,0.5^2*3.0e-5) (3e-2,0.5^2*3.0e-5) (6e-2,3.0e-5)}; \node at (axis cs: 0.07, 1.5e-5){$2$};
            \addplot[color=black,thick,fill = white,forget plot] coordinates {(6e-2,1.5e-7) (6e-2,0.5^3*1.5e-7) (3e-2,0.5^3*1.5e-7) (6e-2,1.5e-7)}; \node at (axis cs: 0.07, 4.0e-8){$3$};
            \addplot[color=black,thick,fill = white,forget plot] coordinates {(6e-2,1.0e-9) (6e-2,0.5^4*1.0e-9) (3e-2,0.5^4*1.0e-9) (6e-2,1.0e-9)}; \node at (axis cs: 0.07, 2.0e-10){$4$};
            \legend{$k=0$,$k=1$,$k=2$,$k=3$,$k=4$,$k=5$,$k=6$,$k=7$,$k=8$,$k=9$}
            \end{loglogaxis}
            \end{tikzpicture}&
            \tikzsetnextfilename{rkrk4_kmax10_convtest_IMEX_eps1}
            \begin{tikzpicture}[scale=0.65]
            \begin{loglogaxis}[cycle list name=green,xlabel={$\Delta t$},grid=major,legend style={at={(0.98,0.02)},anchor= south east,font=\footnotesize},title={$\method{4}{9}$},label style={font=\large},title style={font=\large},legend cell align={left},ymin=1e-16,ymax=1e-1]
            \addplot table[skip first n=1,x expr={10.5/\thisrowno{0}}, y expr={\thisrowno{1}}] {../Code/results/results_imexmd_parallel_rkrk4_kmax10_convtest_IMEX_eps1.csv};
            \addplot table[skip first n=1,x expr={10.5/\thisrowno{0}}, y expr={\thisrowno{2}}] {../Code/results/results_imexmd_parallel_rkrk4_kmax10_convtest_IMEX_eps1.csv};
            \addplot table[skip first n=1,x expr={10.5/\thisrowno{0}}, y expr={\thisrowno{3}}] {../Code/results/results_imexmd_parallel_rkrk4_kmax10_convtest_IMEX_eps1.csv};
            \addplot table[skip first n=1,x expr={10.5/\thisrowno{0}}, y expr={\thisrowno{4}}] {../Code/results/results_imexmd_parallel_rkrk4_kmax10_convtest_IMEX_eps1.csv};
            \addplot table[skip first n=1,x expr={10.5/\thisrowno{0}}, y expr={\thisrowno{5}}] {../Code/results/results_imexmd_parallel_rkrk4_kmax10_convtest_IMEX_eps1.csv};
            \addplot table[skip first n=1,x expr={10.5/\thisrowno{0}}, y expr={\thisrowno{6}}] {../Code/results/results_imexmd_parallel_rkrk4_kmax10_convtest_eps1.csv};
            \addplot table[skip first n=1,x expr={10.5/\thisrowno{0}}, y expr={\thisrowno{7}}] {../Code/results/results_imexmd_parallel_rkrk4_kmax10_convtest_IMEX_eps1.csv};
            \addplot table[skip first n=1,x expr={10.5/\thisrowno{0}}, y expr={\thisrowno{8}}] {../Code/results/results_imexmd_parallel_rkrk4_kmax10_convtest_IMEX_eps1.csv};
            \addplot table[skip first n=1,x expr={10.5/\thisrowno{0}}, y expr={\thisrowno{9}}] {../Code/results/results_imexmd_parallel_rkrk4_kmax10_convtest_IMEX_eps1.csv};
            \addplot table[skip first n=1,x expr={10.5/\thisrowno{0}}, y expr={\thisrowno{10}}] {../Code/results/results_imexmd_parallel_rkrk4_kmax10_convtest_IMEX_eps1.csv};
            \addplot[color=black,thick,fill = white,forget plot] coordinates {(6e-2,3.0e-5) (6e-2,0.5^2*3.0e-5) (3e-2,0.5^2*3.0e-5) (6e-2,3.0e-5)}; \node at (axis cs: 0.07, 1.5e-5){$2$};
            \addplot[color=black,thick,fill = white,forget plot] coordinates {(6e-2,1.5e-7) (6e-2,0.5^3*1.5e-7) (3e-2,0.5^3*1.5e-7) (6e-2,1.5e-7)}; \node at (axis cs: 0.07, 4.0e-8){$3$};
            \addplot[color=black,thick,fill = white,forget plot] coordinates {(6e-2,1.0e-9) (6e-2,0.5^4*1.0e-9) (3e-2,0.5^4*1.0e-9) (6e-2,1.0e-9)}; \node at (axis cs: 0.07, 2.0e-10){$4$};
            \legend{$k=0$,$k=1$,$k=2$,$k=3$,$k=4$,$k=5$,$k=6$,$k=7$,$k=8$,$k=9$}
            \end{loglogaxis}
            \end{tikzpicture}\\
            \tikzsetnextfilename{rkrk6_kmax4_convtest_IMEX_eps1}
            \begin{tikzpicture}[scale=0.65]
            \begin{loglogaxis}[cycle list name=green,xlabel={$\Delta t$},ylabel={$\|w-w_h\|_2$} ,grid=major,legend style={at={(0.98,0.02)},anchor= south east,font=\footnotesize},title={$\method{6}{3}$},label style={font=\large},title style={font=\large},legend cell align={left},ymin=1e-16,ymax=1e-1]
            \addplot table[skip first n=1,x expr={10.5/\thisrowno{0}}, y expr={\thisrowno{1}}] {../Code/results/results_imexmd_parallel_rkrk6_kmax4_convtest_IMEX_eps1.csv};
            \addplot table[skip first n=1,x expr={10.5/\thisrowno{0}}, y expr={\thisrowno{2}}] {../Code/results/results_imexmd_parallel_rkrk6_kmax4_convtest_IMEX_eps1.csv};
            \addplot table[skip first n=1,x expr={10.5/\thisrowno{0}}, y expr={\thisrowno{3}}] {../Code/results/results_imexmd_parallel_rkrk6_kmax4_convtest_IMEX_eps1.csv};
            \addplot table[skip first n=1,x expr={10.5/\thisrowno{0}}, y expr={\thisrowno{4}}] {../Code/results/results_imexmd_parallel_rkrk6_kmax4_convtest_IMEX_eps1.csv};
            \addplot[color=black,thick,fill = white,forget plot] coordinates {(6e-2,3.0e-5) (6e-2,0.5^2*3.0e-5) (3e-2,0.5^2*3.0e-5) (6e-2,3.0e-5)}; \node at (axis cs: 0.07, 1.5e-5){$2$};
            \addplot[color=black,thick,fill = white,forget plot] coordinates {(6e-2,1.5e-7) (6e-2,0.5^3*1.5e-7) (3e-2,0.5^3*1.5e-7) (6e-2,1.5e-7)}; \node at (axis cs: 0.07, 4.0e-8){$3$};
            \addplot[color=black,thick,fill = white,forget plot] coordinates {(6e-2,1.0e-9) (6e-2,0.5^4*1.0e-9) (3e-2,0.5^4*1.0e-9) (6e-2,1.0e-9)}; \node at (axis cs: 0.07, 2.0e-10){$4$};
            \legend{$k=0$,$k=1$,$k=2$,$k=3$,$k=4$,$k=5$,$k=6$,$k=7$,$k=8$,$k=9$}
            \end{loglogaxis}
            \end{tikzpicture}&
            \tikzsetnextfilename{rkrk6_kmax10_convtest_IMEX_eps1}
            \begin{tikzpicture}[scale=0.65]
            \begin{loglogaxis}[cycle list name=green,xlabel={$\Delta t$},grid=major,legend style={at={(0.98,0.02)},anchor= south east,font=\footnotesize},title={$\method{6}{9}$},label style={font=\large},title style={font=\large},legend cell align={left},ymin=1e-16,ymax=1e-1]
            \addplot table[skip first n=1,x expr={10.5/\thisrowno{0}}, y expr={\thisrowno{1}}] {../Code/results/results_imexmd_parallel_rkrk6_kmax10_convtest_IMEX_eps1.csv};
            \addplot table[skip first n=1,x expr={10.5/\thisrowno{0}}, y expr={\thisrowno{2}}] {../Code/results/results_imexmd_parallel_rkrk6_kmax10_convtest_IMEX_eps1.csv};
            \addplot table[skip first n=1,x expr={10.5/\thisrowno{0}}, y expr={\thisrowno{3}}] {../Code/results/results_imexmd_parallel_rkrk6_kmax10_convtest_IMEX_eps1.csv};
            \addplot table[skip first n=1,x expr={10.5/\thisrowno{0}}, y expr={\thisrowno{4}}] {../Code/results/results_imexmd_parallel_rkrk6_kmax10_convtest_IMEX_eps1.csv};
            \addplot table[skip first n=1,x expr={10.5/\thisrowno{0}}, y expr={\thisrowno{5}}] {../Code/results/results_imexmd_parallel_rkrk6_kmax10_convtest_IMEX_eps1.csv};
            \addplot table[skip first n=1,x expr={10.5/\thisrowno{0}}, y expr={\thisrowno{6}}] {../Code/results/results_imexmd_parallel_rkrk6_kmax10_convtest_IMEX_eps1.csv};
            \addplot table[skip first n=1,x expr={10.5/\thisrowno{0}}, y expr={\thisrowno{7}}] {../Code/results/results_imexmd_parallel_rkrk6_kmax10_convtest_IMEX_eps1.csv};
            \addplot table[skip first n=1,x expr={10.5/\thisrowno{0}}, y expr={\thisrowno{8}}] {../Code/results/results_imexmd_parallel_rkrk6_kmax10_convtest_IMEX_eps1.csv};
            \addplot table[skip first n=1,x expr={10.5/\thisrowno{0}}, y expr={\thisrowno{9}}] {../Code/results/results_imexmd_parallel_rkrk6_kmax10_convtest_IMEX_eps1.csv};
            \addplot table[skip first n=1,x expr={10.5/\thisrowno{0}}, y expr={\thisrowno{10}}] {../Code/results/results_imexmd_parallel_rkrk6_kmax10_convtest_IMEX_eps1.csv};
            \addplot[color=black,thick,fill = white,forget plot] coordinates {(6e-2,3.0e-5) (6e-2,0.5^2*3.0e-5) (3e-2,0.5^2*3.0e-5) (6e-2,3.0e-5)}; \node at (axis cs: 0.07, 1.5e-5){$2$};
            \addplot[color=black,thick,fill = white,forget plot] coordinates {(6e-2,1.5e-7) (6e-2,0.5^3*1.5e-7) (3e-2,0.5^3*1.5e-7) (6e-2,1.5e-7)}; \node at (axis cs: 0.07, 4.0e-8){$3$};
            \addplot[color=black,thick,fill = white,forget plot] coordinates {(6e-2,5.0e-9) (6e-2,0.5^4*5.0e-9) (3e-2,0.5^4*5.0e-9) (6e-2,5.0e-9)}; \node at (axis cs: 0.07, 1.0e-9){$4$};
            \addplot[color=black,thick,fill = white,forget plot] coordinates {(6e-2,1.5e-10) (6e-2,0.5^5*1.5e-10) (3e-2,0.5^5*1.5e-10) (6e-2,1.5e-10)}; \node at (axis cs: 0.07, 2.0e-11){$5$};
            \addplot[color=black,thick,fill = white,forget plot] coordinates {(1.2e-1,1.0e-12) (1.2e-1,0.5^6*1.0e-12) (6e-2,0.5^6*1.0e-12) (1.2e-1,1.0e-12)}; \node at (axis cs: 0.14, 9.0e-14){$6$};
            \legend{$k=0$,$k=1$,$k=2$,$k=3$,$k=4$,$k=5$,$k=6$,$k=7$,$k=8$,$k=9$}
            \end{loglogaxis}
            \end{tikzpicture}\\
            \tikzsetnextfilename{rkrk8_kmax4_convtest_IMEX_eps1}
            \begin{tikzpicture}[scale=0.65]
            \begin{loglogaxis}[cycle list name=green,xlabel={$\Delta t$},ylabel={$\|w-w_h\|_2$} ,grid=major,legend style={at={(0.98,0.02)},anchor= south east,font=\footnotesize},title={$\method{8}{3}$},label style={font=\large},title style={font=\large},legend cell align={left},ymin=1e-16,ymax=1e-1]
            \addplot table[skip first n=1,x expr={10.5/\thisrowno{0}}, y expr={\thisrowno{1}}] {../Code/results/results_imexmd_parallel_rkrk8_kmax4_convtest_IMEX_eps1.csv};
            \addplot table[skip first n=1,x expr={10.5/\thisrowno{0}}, y expr={\thisrowno{2}}] {../Code/results/results_imexmd_parallel_rkrk8_kmax4_convtest_IMEX_eps1.csv};
            \addplot table[skip first n=1,x expr={10.5/\thisrowno{0}}, y expr={\thisrowno{3}}] {../Code/results/results_imexmd_parallel_rkrk8_kmax4_convtest_IMEX_eps1.csv};
            \addplot table[skip first n=1,x expr={10.5/\thisrowno{0}}, y expr={\thisrowno{4}}] {../Code/results/results_imexmd_parallel_rkrk8_kmax4_convtest_IMEX_eps1.csv};
            \addplot[color=black,thick,fill = white,forget plot] coordinates {(6e-2,3.0e-5) (6e-2,0.5^2*3.0e-5) (3e-2,0.5^2*3.0e-5) (6e-2,3.0e-5)}; \node at (axis cs: 0.07, 1.5e-5){$2$};
            \addplot[color=black,thick,fill = white,forget plot] coordinates {(6e-2,1.5e-7) (6e-2,0.5^3*1.5e-7) (3e-2,0.5^3*1.5e-7) (6e-2,1.5e-7)}; \node at (axis cs: 0.07, 4.0e-8){$3$};
            \addplot[color=black,thick,fill = white,forget plot] coordinates {(6e-2,1.0e-9) (6e-2,0.5^4*1.0e-9) (3e-2,0.5^4*1.0e-9) (6e-2,1.0e-9)}; \node at (axis cs: 0.07, 2.0e-10){$4$};
            \legend{$k=0$,$k=1$,$k=2$,$k=3$,$k=4$,$k=5$,$k=6$,$k=7$,$k=8$,$k=9$}
            \end{loglogaxis}
            \end{tikzpicture}&
            \tikzsetnextfilename{rkrk8_kmax10_convtest_IMEX_eps1}
            \begin{tikzpicture}[scale=0.65]
            \begin{loglogaxis}[cycle list name=green,xlabel={$\Delta t$},grid=major,legend style={at={(0.98,0.02)},anchor= south east,font=\footnotesize},title={$\method{8}{9}$},label style={font=\large},title style={font=\large},legend cell align={left},ymin=1e-16,ymax=1e-1]
            \addplot table[skip first n=1,x expr={10.5/\thisrowno{0}}, y expr={\thisrowno{1}}] {../Code/results/results_imexmd_parallel_rkrk8_kmax10_convtest_IMEX_eps1.csv};
            \addplot table[skip first n=1,x expr={10.5/\thisrowno{0}}, y expr={\thisrowno{2}}] {../Code/results/results_imexmd_parallel_rkrk8_kmax10_convtest_IMEX_eps1.csv};
            \addplot table[skip first n=1,x expr={10.5/\thisrowno{0}}, y expr={\thisrowno{3}}] {../Code/results/results_imexmd_parallel_rkrk8_kmax10_convtest_IMEX_eps1.csv};
            \addplot table[skip first n=1,x expr={10.5/\thisrowno{0}}, y expr={\thisrowno{4}}] {../Code/results/results_imexmd_parallel_rkrk8_kmax10_convtest_IMEX_eps1.csv};
            \addplot table[skip first n=1,x expr={10.5/\thisrowno{0}}, y expr={\thisrowno{5}}] {../Code/results/results_imexmd_parallel_rkrk8_kmax10_convtest_IMEX_eps1.csv};
            \addplot table[skip first n=1,x expr={10.5/\thisrowno{0}}, y expr={\thisrowno{6}}] {../Code/results/results_imexmd_parallel_rkrk8_kmax10_convtest_IMEX_eps1.csv};
            \addplot table[skip first n=1,x expr={10.5/\thisrowno{0}}, y expr={\thisrowno{7}}] {../Code/results/results_imexmd_parallel_rkrk8_kmax10_convtest_IMEX_eps1.csv};
            \addplot table[skip first n=1,x expr={10.5/\thisrowno{0}}, y expr={\thisrowno{8}}] {../Code/results/results_imexmd_parallel_rkrk8_kmax10_convtest_IMEX_eps1.csv};
            \addplot table[skip first n=1,x expr={10.5/\thisrowno{0}}, y expr={\thisrowno{9}}] {../Code/results/results_imexmd_parallel_rkrk8_kmax10_convtest_IMEX_eps1.csv};
            \addplot table[skip first n=1,x expr={10.5/\thisrowno{0}}, y expr={\thisrowno{10}}] {../Code/results/results_imexmd_parallel_rkrk8_kmax10_convtest_IMEX_eps1.csv};
            \addplot[color=black,thick,fill = white,forget plot] coordinates {(6e-2,3.0e-5) (6e-2,0.5^2*3.0e-5) (3e-2,0.5^2*3.0e-5) (6e-2,3.0e-5)}; \node at (axis cs: 0.07, 1.5e-5){$2$};
            \addplot[color=black,thick,fill = white,forget plot] coordinates {(6e-2,1.5e-7) (6e-2,0.5^3*1.5e-7) (3e-2,0.5^3*1.5e-7) (6e-2,1.5e-7)}; \node at (axis cs: 0.07, 4.0e-8){$3$};
            \addplot[color=black,thick,fill = white,forget plot] coordinates {(6e-2,5.0e-9) (6e-2,0.5^4*5.0e-9) (3e-2,0.5^4*5.0e-9) (6e-2,5.0e-9)}; \node at (axis cs: 0.07, 1.0e-9){$4$};
            \addplot[color=black,thick,fill = white,forget plot] coordinates {(6e-2,1.5e-10) (6e-2,0.5^5*1.5e-10) (3e-2,0.5^5*1.5e-10) (6e-2,1.5e-10)}; \node at (axis cs: 0.07, 2.0e-11){$5$};
            \addplot[color=black,thick,fill = white,forget plot] coordinates {(6e-2,3.0e-12) (6e-2,0.5^6*3.0e-12) (3e-2,0.5^6*3.0e-12) (6e-2,3.0e-12)}; \node at (axis cs: 0.07, 3.0e-13){$6$};
            \addplot[color=black,thick,fill = white,forget plot] coordinates {(1.6e-1,4.0e-11) (1.6e-1,0.5^7*4.0e-11) (8e-2,0.5^7*4.0e-11) (1.6e-1,4.0e-11)}; \node at (axis cs: 0.14, 2.0e-12){$7$};
            \addplot[color=black,thick,fill = white,forget plot] coordinates {(4e-1,8.0e-12) (4e-1,0.5^8*8.0e-12) (2e-1,0.5^8*8.0e-12) (4e-1,8.0e-12)}; \node at (axis cs: 0.45, 4.0e-13){$8$};
            \legend{$k=0$,$k=1$,$k=2$,$k=3$,$k=4$,$k=5$,$k=6$,$k=7$,$k=8$,$k=9$}
            \end{loglogaxis}
            \end{tikzpicture}
        }
        {
          \includegraphics{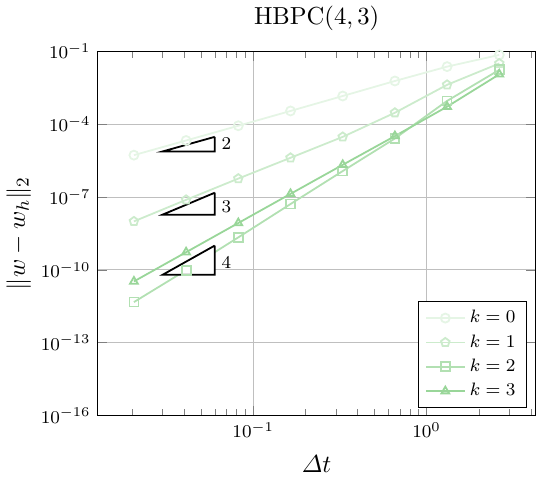}&
          \includegraphics{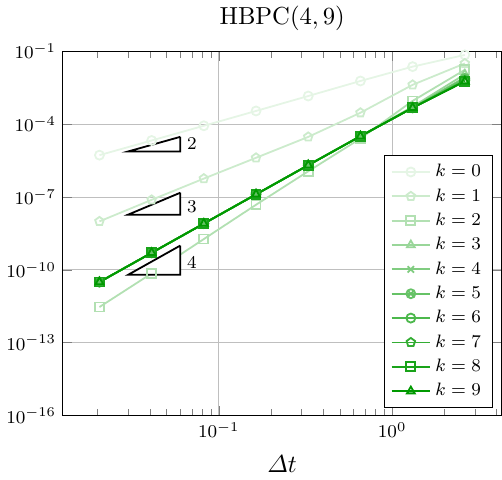}\\
          \includegraphics{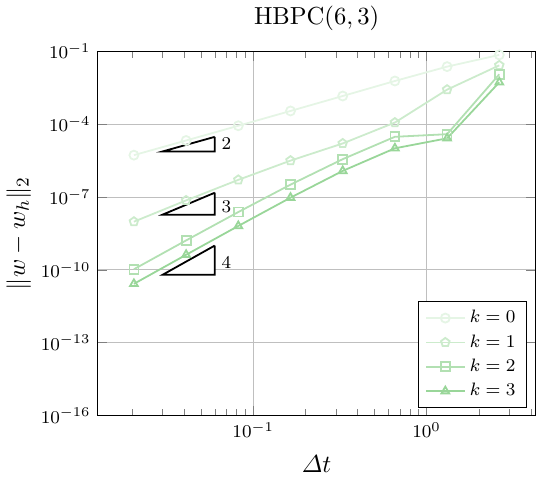}&
          \includegraphics{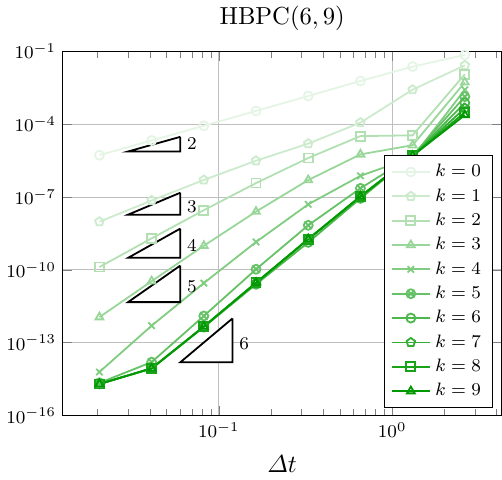}\\
          \includegraphics{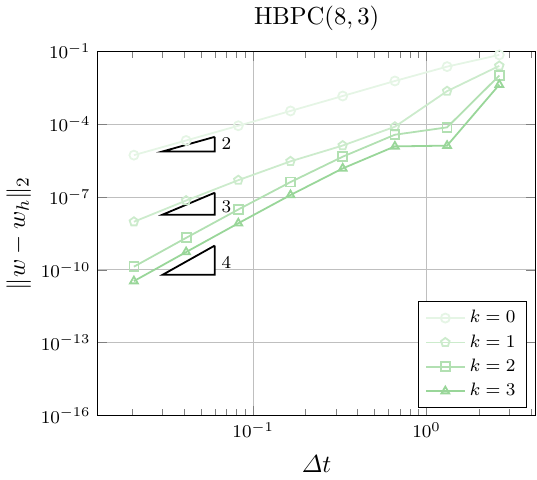}&
          \includegraphics{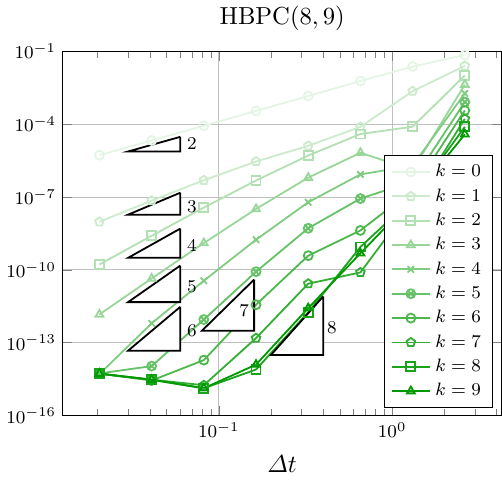}
        }
	\end{tabular}
	\caption{Error of the iterates of the parallel IMEX-MD schemes for the simple ODE problem $w' = -w^{-\frac5 2}$, see Eq.~\eqref{eq:ODE_conv}. The error is computed at $\Tend=0.25$. Different Runge-Kutta tables and $\km=3$ (left) and $\km=9$ (right) are used.}\label{fig:ConvTest}
\end{figure}

Fig.~\ref{fig:ConvTest} shows the convergence of the fourth-, sixth- and
eighth-order schemes after all correction steps. The figure shows that the
predictor step is second order accurate and the correction steps increase the
order of accuracy successively. With $\km=3$ fourth order of accuracy is
achievable and one can note that the last two correction steps are always of
the same order. Increasing $\km$ shows that with each correction step the
scheme picks up one order of accuracy - until the maximum order defined by the
quadrature rule is reached. Hence, the maximum achievable order is
$\min({\km+1},{\text{accuracy of quadrature rule}})$, see also Prop.~\ref{hyp:order}.

\subsection{Pareschi-Russo Problem}

Next, we consider the model problem introduced by Pareschi and Russo (PR)~\cite{pareschi2000implicit} to additionally consider the ability of treating stiff ODEs with the novel method. 
The system of ODEs is given by
\begin{align}\label{eq:PR_problem}
	 w_1'(t) &= -w_2, \qquad w_2'(t) = w_1 + \frac{\sin(w_1)-w_2}{\eps}, \qquad 
	 w_0 = \left(\frac\pi 2,1\right),
\end{align}
and is computed until $T_{\text{end}}=5$. To account for the stiff behavior, similar as it is done in~\cite{pareschi2000implicit}, we split the problem into a non-stiff, explicitly treated part $\Phie$ and a stiff, implicitly treated part $\Phii$
\begin{align*}
	\Phie=\begin{pmatrix}-w_2\\w_1\end{pmatrix},\qquad\Phii=\frac{1}{\eps}\begin{pmatrix}0\\\sin(w_1)-w_2\end{pmatrix}.
\end{align*}
We use this test problem to investigate the modifications of Alg.~\ref{alg:mdode} compared to the original one in~\cite{SealSchuetz19} and the straight-forward, but low-order parallelization variant. For an overview on the three different algorithms see Fig.~\ref{fig:timeparallel}.

\begin{figure}
	\centering
	\setlength{\tabcolsep}{0.1em}
	\begin{tabular}{cccc}
        \ifthenelse{\boolean{compilefromscratch}}{
        
        \tikzsetnextfilename{results_imexmd_serialrk4_kmax9_PR_IMEX_eps1}        
		\begin{tikzpicture}[scale=0.6]
		\begin{loglogaxis}[cycle list name=epslist,xlabel={$\Delta t$},ylabel={$\|w-w_h\|_2$} ,grid=major,legend style={at={(0.98,0.02)},anchor= south east,font=\footnotesize},
    title={ $\method{4}{9}$, $\varepsilon=10^0$},label style={font=\large},title style={font=\large},legend cell align={left},ymax=10,ymin=1e-14,x post scale=0.9]
		\addplot table[skip first n=1,x expr={5/\thisrowno{0}}, y expr={\thisrowno{1}}] {../Code/results/results_imexmd_serialrk4_kmax9_PR_IMEX_eps1.csv};
		\addplot table[skip first n=1,x expr={5/\thisrowno{0}}, y expr={\thisrowno{10}}] {../Code/results/results_imexmd_parallelrk4_kmax10_PR_IMEX_eps1.csv};
		\addplot table[skip first n=1,x expr={5/\thisrowno{0}}, y expr={\thisrowno{10}}] {../Code/results/results_imexmd_parallel_rkrk4_kmax10_PR_IMEX_eps1.csv};
		\addplot[color=black,thick,fill = white,forget plot] coordinates {(6e-2,1.5e-4) (6e-2,0.5^2*1.5e-4) (3e-2,0.5^2*1.5e-4) (6e-2,1.5e-4)}; \node at (axis cs: 0.075, 5.5e-5){$2$};
		\addplot[color=black,thick,fill = white,forget plot] coordinates {(6e-2,1.5e-8) (6e-2,0.5^4*1.5e-8) (3e-2,0.5^4*1.5e-8) (6e-2,1.5e-8)}; \node at (axis cs: 0.075, 3.5e-9){$4$};
		\legend{original, lo-parallel, ho-parallel}
		\end{loglogaxis}
		\end{tikzpicture}&
		\tikzsetnextfilename{results_imexmd_serialrk4_kmax9_PR_IMEX_eps0.01}
		\begin{tikzpicture}[scale=0.6]
		\begin{loglogaxis}[cycle list name=epslist,xlabel={$\Delta t$},grid=major,legend style={at={(0.98,0.02)},anchor= south east,font=\footnotesize},title={ $\method{4}{9}$, $\varepsilon=10^{-2}$},label style={font=\large},title style={font=\large},legend cell align={left},yticklabels={,,},ymax=10,ymin=1e-14,x post scale=0.9]
		\addplot table[skip first n=1,x expr={5/\thisrowno{0}}, y expr={\thisrowno{1}}] {../Code/results/results_imexmd_serialrk4_kmax9_PR_IMEX_eps0.01.csv};
		\addplot table[skip first n=1,x expr={5/\thisrowno{0}}, y expr={\thisrowno{10}}] {../Code/results/results_imexmd_parallelrk4_kmax10_PR_IMEX_eps0.01.csv};
		\addplot table[skip first n=1,x expr={5/\thisrowno{0}}, y expr={\thisrowno{10}}] {../Code/results/results_imexmd_parallel_rkrk4_kmax10_PR_IMEX_eps0.01.csv};
		\addplot[color=black,thick,fill = white,forget plot] coordinates {(6e-2,1.5e-3) (6e-2,0.5^2*1.5e-3) (3e-2,0.5^2*1.5e-3) (6e-2,1.5e-3)}; \node at (axis cs: 0.075, 5.5e-4){$2$};
		\addplot[color=black,thick,fill = white,forget plot] coordinates {(1.4e-1,1.5e-6) (1.4e-1,0.5^4*1.5e-6) (7e-2,0.5^4*1.5e-6) (1.4e-1,1.5e-6)}; \node at (axis cs: 0.16, 3.5e-7){$4$};
		\legend{original, lo-parallel, ho-parallel}
		\end{loglogaxis}
		\end{tikzpicture}&
		\tikzsetnextfilename{results_imexmd_serialrk4_kmax9_PR_IMEX_eps0.001}		
		\begin{tikzpicture}[scale=0.6]
		\begin{loglogaxis}[cycle list name=epslist,xlabel={$\Delta t$},grid=major,legend style={at={(0.98,0.02)},anchor= south east,font=\footnotesize},title={$\method{4}{9}$, $\varepsilon=10^{-3}$},label style={font=\large},title style={font=\large},legend cell align={left},yticklabels={,,},ymax=10,ymin=1e-14,x post scale=0.9]
		\addplot table[skip first n=1,x expr={5/\thisrowno{0}}, y expr={\thisrowno{1}}] {../Code/results/results_imexmd_serialrk4_kmax9_PR_IMEX_eps0.001.csv};
		\addplot table[skip first n=1,x expr={5/\thisrowno{0}}, y expr={\thisrowno{10}}] {../Code/results/results_imexmd_parallelrk4_kmax10_PR_IMEX_eps0.001.csv};
		\addplot table[skip first n=1,x expr={5/\thisrowno{0}}, y expr={\thisrowno{10}}] {../Code/results/results_imexmd_parallel_rkrk4_kmax10_PR_IMEX_eps0.001.csv};
		\addplot[color=black,thick,fill = white,forget plot] coordinates {(6e-2,1.5e-3) (6e-2,0.5^2*1.5e-3) (3e-2,0.5^2*1.5e-3) (6e-2,1.5e-3)}; \node at (axis cs: 0.075, 5.5e-4){$2$};
		\addplot[color=black,thick,fill = white,forget plot] coordinates {(2.2e-2,1.5e-7) (2.2e-2,0.5^4*1.5e-7) (1.1e-2,0.5^4*1.5e-7) (2.2e-2,1.5e-7)}; \node at (axis cs: 0.025, 3.5e-8){$4$};
		\legend{original, lo-parallel, ho-parallel}		
		\end{loglogaxis}
		\end{tikzpicture}\\
		\tikzsetnextfilename{results_imexmd_serialrk6_kmax9_PR_IMEX_eps1}	
		\begin{tikzpicture}[scale=0.6]
		\begin{loglogaxis}[cycle list name=epslist,xlabel={$\Delta t$},ylabel={$\|w-w_h\|_2$} ,grid=major,legend style={at={(0.98,0.02)},anchor= south east,font=\footnotesize},title={ $\method{6}{9}$, $\varepsilon=10^0$},label style={font=\large},title style={font=\large},legend cell align={left},ymax=10,ymin=1e-14,x post scale=0.9]
		\addplot table[skip first n=1,x expr={5/\thisrowno{0}}, y expr={\thisrowno{1}}] {../Code/results/results_imexmd_serialrk6_kmax9_PR_IMEX_eps1.csv};
		\addplot table[skip first n=1,x expr={5/\thisrowno{0}}, y expr={\thisrowno{10}}] {../Code/results/results_imexmd_parallelrk6_kmax10_PR_IMEX_eps1.csv};
		\addplot table[skip first n=1,x expr={5/\thisrowno{0}}, y expr={\thisrowno{10}}] {../Code/results/results_imexmd_parallel_rkrk6_kmax10_PR_IMEX_eps1.csv};
		\addplot[color=black,thick,fill = white,forget plot] coordinates {(6e-2,1.5e-4) (6e-2,0.5^2*1.5e-4) (3e-2,0.5^2*1.5e-4) (6e-2,1.5e-4)}; \node at (axis cs: 0.075, 5.5e-5){$2$};
		\addplot[color=black,thick,fill = white,forget plot] coordinates {(8e-1,6.5e-7) (8e-1,0.5^6*6.5e-7) (4.0e-1,0.5^6*6.5e-7) (8e-1,6.5e-7)}; \node at (axis cs: 0.95, 4.0e-8){$6$};
		\legend{original, lo-parallel, ho-parallel}
		\end{loglogaxis}
		\end{tikzpicture}&
		\tikzsetnextfilename{results_imexmd_serialrk6_kmax9_PR_IMEX_eps0.01}
		\begin{tikzpicture}[scale=0.6]
		\begin{loglogaxis}[cycle list name=epslist,xlabel={$\Delta t$},grid=major,legend style={at={(0.98,0.02)},anchor= south east,font=\footnotesize},title={ $\method{6}{9}$, $\varepsilon=10^{-2}$},label style={font=\large},title style={font=\large},legend cell align={left},yticklabels={,,},ymax=10,ymin=1e-14,x post scale=0.9]
		\addplot table[skip first n=1,x expr={5/\thisrowno{0}}, y expr={\thisrowno{1}}] {../Code/results/results_imexmd_serialrk6_kmax9_PR_IMEX_eps0.01.csv};
		\addplot table[skip first n=1,x expr={5/\thisrowno{0}}, y expr={\thisrowno{10}}] {../Code/results/results_imexmd_parallelrk6_kmax10_PR_IMEX_eps0.01.csv};
		\addplot table[skip first n=1,x expr={5/\thisrowno{0}}, y expr={\thisrowno{10}}] {../Code/results/results_imexmd_parallel_rkrk6_kmax10_PR_IMEX_eps0.01.csv};
		\addplot[color=black,thick,fill = white,forget plot] coordinates {(6e-2,1.5e-3) (6e-2,0.5^2*1.5e-3) (3e-2,0.5^2*1.5e-3) (6e-2,1.5e-3)}; \node at (axis cs: 0.075, 5.5e-4){$2$};
		\addplot[color=black,thick,fill = white,forget plot] coordinates {(4e-2,4.5e-9) (4e-2,0.5^3*4.5e-9) (2.0e-2,0.5^3*4.5e-9) (4e-2,4.5e-9)}; \node at (axis cs: 5e-2, 1.4e-9){$3$};
		\legend{original, lo-parallel, ho-parallel}
		\end{loglogaxis}
		\end{tikzpicture}&
		\tikzsetnextfilename{results_imexmd_serialrk6_kmax9_PR_IMEX_eps0.001}
		\begin{tikzpicture}[scale=0.6]
		\begin{loglogaxis}[cycle list name=epslist,xlabel={$\Delta t$},grid=major,legend style={at={(0.98,0.02)},anchor= south east,font=\footnotesize},title={ $\method{6}{9}$, $\varepsilon=10^{-3}$},label style={font=\large},title style={font=\large},legend cell align={left},yticklabels={,,},ymax=10,ymin=1e-14,x post scale=0.9]
		\addplot table[skip first n=1,x expr={5/\thisrowno{0}}, y expr={\thisrowno{1}}] {../Code/results/results_imexmd_serialrk6_kmax9_PR_IMEX_eps0.001.csv};
		\addplot table[skip first n=1,x expr={5/\thisrowno{0}}, y expr={\thisrowno{10}}] {../Code/results/results_imexmd_parallelrk6_kmax10_PR_IMEX_eps0.001.csv};
		\addplot table[skip first n=1,x expr={5/\thisrowno{0}}, y expr={\thisrowno{10}}] {../Code/results/results_imexmd_parallel_rkrk6_kmax10_PR_IMEX_eps0.001.csv};
		\addplot[color=black,thick,fill = white,forget plot] coordinates {(6e-2,1.5e-3) (6e-2,0.5^2*1.5e-3) (3e-2,0.5^2*1.5e-3) (6e-2,1.5e-3)}; \node at (axis cs: 0.075, 5.5e-4){$2$};
		\addplot[color=black,thick,fill = white,forget plot] coordinates {(6e-2,5.5e-7) (6e-2,0.5^3*5.5e-7) (3e-2,0.5^3*5.5e-7) (6e-2,5.5e-7)}; \node at (axis cs: 0.075, 1.5e-7){$3$};
		\addplot[color=black,thick,fill = white,forget plot] coordinates {(2.2e-2,4.5e-8) (2.2e-2,0.5^6*4.5e-8) (1.1e-2,0.5^6*4.5e-8) (2.2e-2,4.5e-8)}; \node at (axis cs: 0.025, 3.0e-9){$6$};
		\legend{original, lo-parallel, ho-parallel}
		\end{loglogaxis}
		\end{tikzpicture}\\
		
		\tikzsetnextfilename{results_imexmd_serialrk8_kmax9_PR_IMEX_eps1} 
		\begin{tikzpicture}[scale=0.6]
		\begin{loglogaxis}[cycle list name=epslist,xlabel={$\Delta t$},ylabel={$\|w-w_h\|_2$},grid=major,legend style={at={(0.98,0.02)},anchor= south east,font=\footnotesize},title={ $\method{8}{9}$, $\varepsilon=10^0$},label style={font=\large},title style={font=\large},legend cell align={left},ymax=10,ymin=1e-14,x post scale=0.9]
		\addplot table[skip first n=1,x expr={5/\thisrowno{0}}, y expr={\thisrowno{1}}] {../Code/results/results_imexmd_serialrk8_kmax9_PR_IMEX_eps1.csv};
		\addplot table[skip first n=1,x expr={5/\thisrowno{0}}, y expr={\thisrowno{10}}] {../Code/results/results_imexmd_parallelrk8_kmax10_PR_IMEX_eps1.csv};
		\addplot table[skip first n=1,x expr={5/\thisrowno{0}}, y expr={\thisrowno{10}}] {../Code/results/results_imexmd_parallel_rkrk8_kmax10_PR_IMEX_eps1.csv};
		\addplot[color=black,thick,fill = white,forget plot] coordinates {(6e-2,1.5e-4) (6e-2,0.5^2*1.5e-4) (3e-2,0.5^2*1.5e-4) (6e-2,1.5e-4)}; \node at (axis cs: 0.075, 5.5e-5){$2$};
		\addplot[color=black,thick,fill = white,forget plot] coordinates {(5.5e-1,1.5e-7) (5.5e-1,0.5^8*1.5e-7) (2.75e-1,0.5^8*1.5e-7) (5.5e-1,1.5e-7)}; \node at (axis cs: 0.48, 7.0e-9){$8$};
		\legend{original, lo-parallel, ho-parallel}
		\end{loglogaxis}
		\end{tikzpicture}&
		\tikzsetnextfilename{results_imexmd_serialrk8_kmax9_PR_IMEX_eps0.01}
		\begin{tikzpicture}[scale=0.6]
		\begin{loglogaxis}[cycle list name=epslist,xlabel={$\Delta t$},grid=major,legend style={at={(0.98,0.02)},anchor= south east,font=\footnotesize},title={ $\method{8}{9}$, $\varepsilon=10^{-2}$},label style={font=\large},title style={font=\large},legend cell align={left},yticklabels={,,},ymax=10,ymin=1e-14,x post scale=0.9]
		\addplot table[skip first n=1,x expr={5/\thisrowno{0}}, y expr={\thisrowno{1}}] {../Code/results/results_imexmd_serialrk8_kmax9_PR_IMEX_eps0.01.csv};
		\addplot table[skip first n=1,x expr={5/\thisrowno{0}}, y expr={\thisrowno{10}}] {../Code/results/results_imexmd_parallelrk8_kmax10_PR_IMEX_eps0.01.csv};
		\addplot table[skip first n=1,x expr={5/\thisrowno{0}}, y expr={\thisrowno{10}}] {../Code/results/results_imexmd_parallel_rkrk8_kmax10_PR_IMEX_eps0.01.csv};
		\addplot[color=black,thick,fill = white,forget plot] coordinates {(6e-2,1.5e-3) (6e-2,0.5^2*1.5e-3) (3e-2,0.5^2*1.5e-3) (6e-2,1.5e-3)}; \node at (axis cs: 0.075, 7.0e-4){$2$};
		\addplot[color=black,thick,fill = white,forget plot] coordinates {(5.5e-2,1.5e-8) (5.5e-2,0.5^3*1.5e-8) (2.75e-2,0.5^3*1.5e-8) (5.5e-2,1.5e-8)}; \node at (axis cs: 0.065, 5.0e-9){$3$};
		\legend{original, lo-parallel, ho-parallel}
		\end{loglogaxis}
		\end{tikzpicture}&
		\tikzsetnextfilename{results_imexmd_serialrk8_kmax9_PR_IMEX_eps0.001}
		\begin{tikzpicture}[scale=0.6]
		\begin{loglogaxis}[cycle list name=epslist,xlabel={$\Delta t$},grid=major,legend style={at={(0.98,0.02)},anchor= south east,font=\footnotesize},title={ $\method{8}{9}$, $\varepsilon=10^{-3}$},label style={font=\large},title style={font=\large},legend cell align={left},yticklabels={,,},ymax=10,ymin=1e-14,x post scale=0.9]
		\addplot table[skip first n=1,x expr={5/\thisrowno{0}}, y expr={\thisrowno{1}}] {../Code/results/results_imexmd_serialrk8_kmax9_PR_IMEX_eps0.001.csv};
		\addplot table[skip first n=1,x expr={5/\thisrowno{0}}, y expr={\thisrowno{10}}] {../Code/results/results_imexmd_parallelrk8_kmax10_PR_IMEX_eps0.001.csv};
		\addplot table[skip first n=1,x expr={5/\thisrowno{0}}, y expr={\thisrowno{10}}] {../Code/results/results_imexmd_parallel_rkrk8_kmax10_PR_IMEX_eps0.001.csv};
		\addplot[color=black,thick,fill = white,forget plot] coordinates {(6e-2,5.5e-4) (6e-2,0.5^2*5.5e-4) (3e-2,0.5^2*5.5e-4) (6e-2,5.5e-4)}; \node at (axis cs: 0.075, 1.5e-4){$2$};
		\addplot[color=black,thick,fill = white,forget plot] coordinates {(5e-2,1.0e-7) (5e-2,0.5^6*1.0e-7) (2.5e-2,0.5^6*1.0e-7) (5e-2,1.0e-7)}; \node at (axis cs: 0.06, 7.0e-9){$6$};
		\legend{original, lo-parallel, ho-parallel}		
		\end{loglogaxis}
		\end{tikzpicture}
		}
		{
    \includegraphics{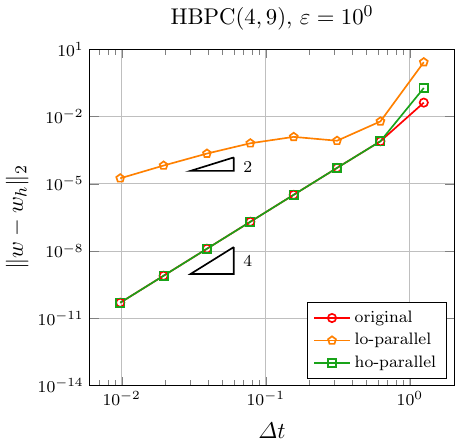}&
		\includegraphics{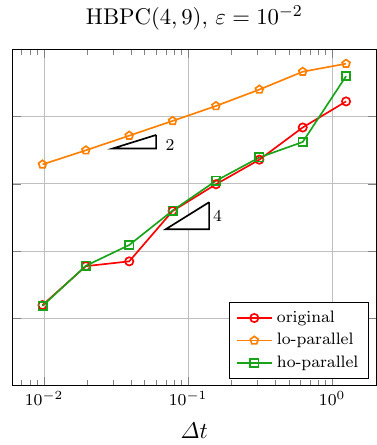}&
		\includegraphics{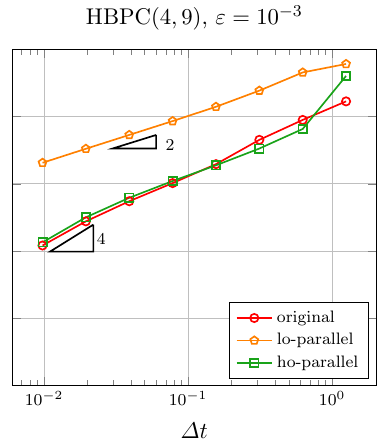}		\\
		\includegraphics{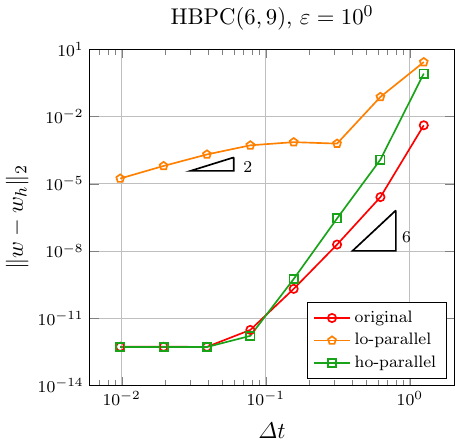}	&	
		\includegraphics{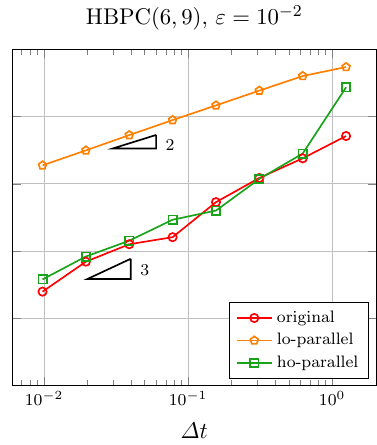}	&	
		\includegraphics{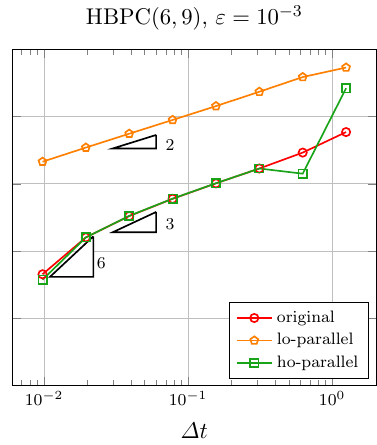} \\
		\includegraphics{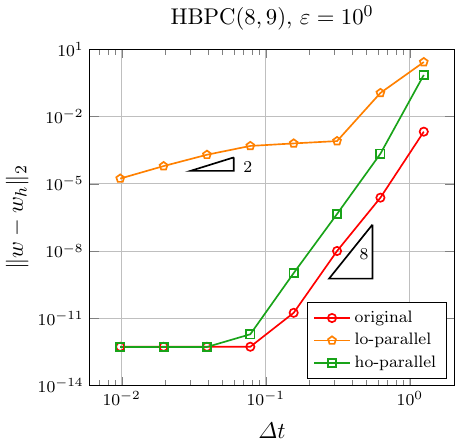} 	&	
		\includegraphics{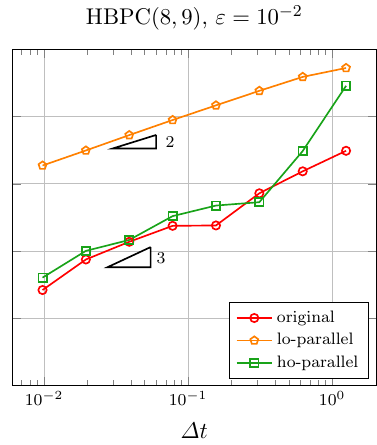}	&	
		\includegraphics{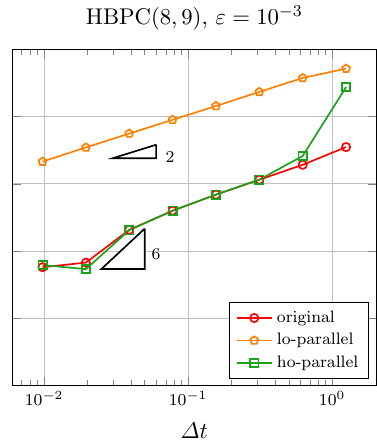}		
		}
	\end{tabular}
	\caption{Error for Pareschi-Russo IMEX problem~\cite{pareschi2000implicit}, see Eq.~\eqref{eq:PR_problem}, at $\Tend=5$ with $4^{th}$ (top), $6^{th}$ (middle) and $8^{th}$ (bottom) order IMEX-MD scheme with $\km=9$ with different stiffness parameters $\eps\in\{10^0,10^{-2},10^{-3}\}$.}\label{fig:PR_comparison}
\end{figure}

In order to investigate the different behaviors for stiff and non-stiff problems, $\eps$ in Eq.~\eqref{eq:PR_problem} is varied from $\eps=1$ to $\eps=10^{-3}$. The comparison is done for all three quadrature rules described in Eq.~\eqref{eq:butcher4}--\eqref{eq:butcher8}. A fixed $\km=9$ is used.
In Fig.~\ref{fig:PR_comparison} the results of the calculations are summarized. One can clearly see that the parallel low-order scheme is always second order accurate, regardless of the equipped quadrature rule and stiffness of the problem (orange lines). Additionally, the error of the parallel-low order scheme is always higher than with the other schemes. Considering the high-order parallel and the original serial scheme, the figure shows that the modifications introduced in Alg.~\ref{alg:mdode} compared to the original algorithm only slightly influence the obtained error and have no effect on the achieved order (compare red and green lines). Nevertheless, some differences between Alg.~\ref{alg:mdode} and the original algorithm are present, mostly showing up for large timesteps and non-stiff problems. Here, the original algorithm gives slightly better results. This is most probably due to the very few timesteps used for large $\Delta t$ (the coarsest resolution uses only four timesteps). Differently to the original algorithm, the information of the last corrector step has not yet reached the first corrector step in those settings, see Fig.~\ref{fig:timeparallel} for illustration. This reduces the accuracy of the algorithm if only a few timesteps are performed. Moreover, in the original algorithm a better $w^n$ is used for all steps $k\ne\km$ what naturally has a favorable influence on the solution.

One can see that while for the fourth order scheme no order reduction can be observed for stiff problems, $\method{6}{\km}$ and $\method{8}{\km}$ show order reduction for an increasing stiffness. In~\cite{SealSchuetz19} it has been shown, that increasing the number of correction steps can cure this problem. Therefore, we perform a convergence study with respect to the number of correction steps $\km$ for the stiffest setting $\eps=10^{-3}$ with $\method{6}{\km}$ and $\method{8}{\km}$.
Fig.~\ref{fig:PR_kmax} shows how the increase of corrector steps improves the accuracy of the obtained results. It is shown numerically that for an increasing $\km$, the predictor-corrector scheme described in Alg.~\ref{alg:mdode} converges towards the limit method, i.e. the fully coupled two-derivative Runge-Kutta method, see Def.~\ref{def:RKlimit}.

Summarizing, the presented simulations of the PR problem show that the modifications of the predictor-corrector scheme from~\cite{SealSchuetz19} described in Alg.~\ref{alg:mdode} have only a slight influence on the solution quality. Using the straightforward possibility to parallelize the scheme (Fig.~\ref{fig:timeparallel} right) is clearly inferior to the original method and Alg.~\ref{alg:mdode} even for stiff problems. Moreover, for the high order methods, it gets obvious that for stiff problems it can be beneficial to increase the number of corrector steps $\km$.
\begin{figure}
	\centering
	\begin{tabular}{cc}
        \ifthenelse{\boolean{compilefromscratch}}{
        
        \tikzsetnextfilename{results_imexmd_serialrk6_kmax2_PR_IMEX_eps0.001}
		\begin{tikzpicture}[scale=0.65]
		\begin{loglogaxis}[cycle list name=epslist,xlabel={$\Delta t$},ylabel={$\|w-w_h\|_2$} ,grid=major,legend style={at={(0.02,0.98)},anchor= north west,font=\footnotesize},title={PR-IMEX, $\varepsilon=10^{-3}$, $\method{6}{\km}$},label style={font=\large},title style={font=\large},legend cell align={left}]
		\addplot table[skip first n=1,x expr={5/\thisrowno{0}}, y expr={\thisrowno{1}}] {../Code/results/results_imexmd_serialrk6_kmax2_PR_IMEX_eps0.001.csv};
		\addplot table[skip first n=1,x expr={5/\thisrowno{0}}, y expr={\thisrowno{1}}] {../Code/results/results_imexmd_serialrk6_kmax10_PR_IMEX_eps0.001.csv};
		\addplot table[skip first n=1,x expr={5/\thisrowno{0}}, y expr={\thisrowno{1}}] {../Code/results/results_imexmd_serialrk6_kmax100_PR_IMEX_eps0.001.csv};
		\addplot table[skip first n=1,x expr={5/\thisrowno{0}}, y expr={\thisrowno{1}}] {../Code/results/results_imexmd_serialrk6_kmax200_PR_IMEX_eps0.001.csv};
		\addplot table[skip first n=1,x expr={5/\thisrowno{0}}, y expr={\thisrowno{1}}] {../Code/results/results_imexmd_serialrk6_kmax1000_PR_IMEX_eps0.001.csv};
		\addplot table[skip first n=1,x expr={5/\thisrowno{0}}, y expr={\thisrowno{1}}] {../Code/results/RungeKutta/results_imexmd_rkrk6_kmax1_PR_eps0.001.csv};
		\addplot[color=black,thick,fill = white,forget plot] coordinates {(1.4e-1,6.5e-9) (1.4e-1,0.5^6*6.5e-9) (7e-2,0.5^6*6.5e-9) (1.4e-1,6.5e-9)}; \node at (axis cs: 0.12, 4.0e-10){$6$};
		\legend{$\km=2$,$\km=10$,$\km=100$,$\km=200$,$\km=1000$,limit}
		\end{loglogaxis}
		\end{tikzpicture}&
		\tikzsetnextfilename{results_imexmd_serialrk8_kmax2_PR_IMEX_eps0.001}
		\begin{tikzpicture}[scale=0.65]
		\begin{loglogaxis}[cycle list name=epslist,xlabel={$\Delta t$},grid=major,legend style={at={(0.02,0.98)},anchor= north west,font=\footnotesize},title={PR-IMEX, $\varepsilon=10^{-3}$, $\method{8}{\km}$},label style={font=\large},title style={font=\large},legend cell align={left}]
		\addplot table[skip first n=1,x expr={5/\thisrowno{0}}, y expr={\thisrowno{1}}] {../Code/results/results_imexmd_serialrk8_kmax2_PR_IMEX_eps0.001.csv};
		\addplot table[skip first n=1,x expr={5/\thisrowno{0}}, y expr={\thisrowno{1}}] {../Code/results/results_imexmd_serialrk8_kmax10_PR_IMEX_eps0.001.csv};
		\addplot table[skip first n=1,x expr={5/\thisrowno{0}}, y expr={\thisrowno{1}}] {../Code/results/results_imexmd_serialrk8_kmax100_PR_IMEX_eps0.001.csv};
		\addplot table[skip first n=1,x expr={5/\thisrowno{0}}, y expr={\thisrowno{1}}] {../Code/results/results_imexmd_serialrk8_kmax200_PR_IMEX_eps0.001.csv};
		\addplot table[skip first n=1,x expr={5/\thisrowno{0}}, y expr={\thisrowno{1}}] {../Code/results/results_imexmd_serialrk8_kmax2000_PR_IMEX_eps0.001.csv};
		\addplot table[skip first n=1,x expr={5/\thisrowno{0}}, y expr={\thisrowno{1}}] {../Code/results/RungeKutta/results_imexmd_rkrk8_kmax1_PR_eps0.001.csv};
		\addplot[color=black,thick,fill = white,forget plot] coordinates {(1.4e-1,6.5e-10) (1.4e-1,0.5^8*6.5e-10) (7e-2,0.5^8*6.5e-10) (1.4e-1,6.5e-10)}; \node at (axis cs: 0.12, 4.0e-11){$8$};
		\legend{$\km=2$,$\km=10$,$\km=100$,$\km=200$,$\km=2000$,limit}
		\end{loglogaxis}
		\end{tikzpicture}
		}
		{
        \includegraphics{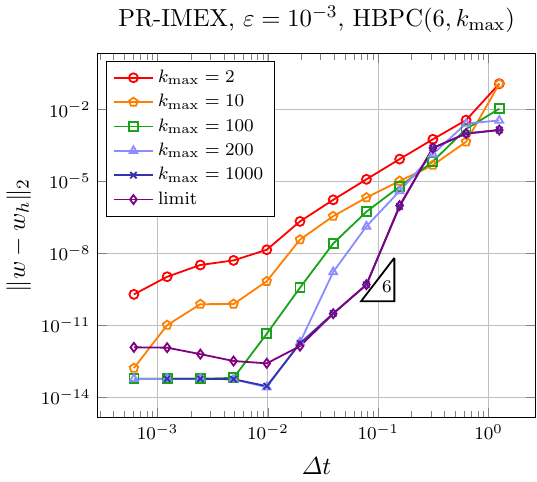}&
		\includegraphics{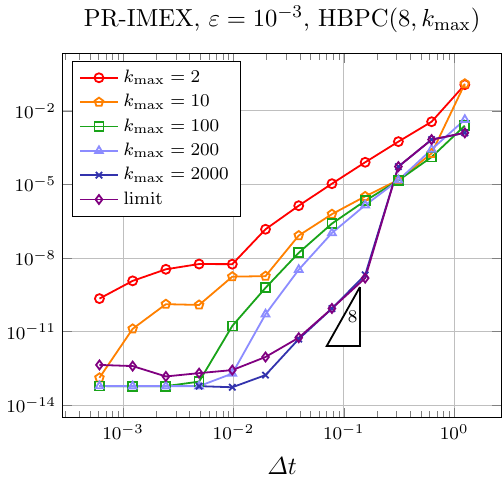}
		}
	\end{tabular}
	\caption{Influence of $\km$ iterations on error of PR-IMEX problem with $\varepsilon=10^{-3}$ for Alg.~\ref{alg:mdode} with $6^{th}$ order (left) and $8^{th}$ order (right) quadrature formula. Limit denotes the corresponding sixth- or eigth-order Runge-Kutta method as given in Def.~\ref{def:RKlimit}. 
	}\label{fig:PR_kmax}
\end{figure}

\subsection{Van-der-Pol Equation}
The van-der-Pol equation (vdP) allows us to study the convergence properties for very stiff problems in more detail. It is given by
\begin{align}\label{eq:Vdp}
	w_1'(t) &= w_2, \qquad w_2'(t) = \frac{(1-w_1^2)w_2 - w_1}{\eps}, \qquad w_0 = \left(2, \frac{-2}{3} + \frac{10}{81} \eps \right),
\end{align}
with $T_{\text{end}}=0.5$. Again, we split the equation into a non-stiff explicitly treated part $\Phie$ and a stiff, implicitly treated part $\Phii$ via
\begin{align*}
	\Phie=\begin{pmatrix}w_2\\0\end{pmatrix},\qquad\Phii=\frac{1}{\eps}\begin{pmatrix}0\\(1-w_1^2)w_2-w_1\end{pmatrix}.
\end{align*}
We use this test problem to study the convergence properties with a typical number of correction steps one would use for practical applications for stiff and non-stiff problems. The calculations are performed on one Intel skylake node with $36$ cores. We compare calculations with using all processors on the node ($\km=71$), to half the number of the processors on the node ($\km=35$).  We choose the minimum number of iterations to obtain the desired order for non-stiff problems, ($\km=3$, $\km=5$ and $\km=7$, respectively for the $4^{th}$, $6^{th}$ and $8^{th}$ order method).
Similar to the previously performed evaluations with the PR test problem, we compare the obtained results with $\km\rightarrow\infty$ and the limiting method from Def.~\ref{def:RKlimit}. 

In order to compute the limiting method as $\km \rightarrow \infty$, we compare the numerical errors w.r.t. the exact solution of simulations with $\km$ and $\km/2$. If the relative difference between those two errors differs by more than $1\%$, $\km$ is increased by a factor of two and the calculation is repeated. That means, repeat the simulation with $2\km$ if
\begin{align*}
	\frac{\left|\left(\|w-w_h\|_2\right)_{\km}-\left(\|w-w_h\|_2|\right)_{\km/2}\right|}{\left(\|w-w_h\|_2\right)_{\km}}>0.01,
\end{align*}
else use the solution with corresponding $\km$ as the ``converged'' solution. With this investigation we can experimentally show that our method converges towards the limiting Runge-Kutta method, i.e. the algorithm does not diverge.
\begin{remark}
	Note that a similar strategy can be pursued to obtain an adaptation criterion for the IMEX-MD scheme. Since the exact solution is not known for general problems, one could think about evaluating the change of the numerical solution after each iteration. If the iterates do not differ more than a user-defined threshold, no further correction steps are performed. Also timestep adaptation could be taken into account. We leave the detailed analysis of an adaptation of Alg.~\ref{alg:mdode} in this direction to future investigations.
\end{remark}

\begin{figure}
	\centering
	\ifthenelse{\boolean{compilefromscratch}}{
	\begin{tabular}{cccc}
        \tikzsetnextfilename{limit1}
		\begin{tikzpicture}[scale=0.5]
		\begin{loglogaxis}[cycle list name=epslist,xlabel={$\Delta t$},ylabel={$\|w-w_h\|_2$} ,grid=major,legend style={at={(0.02,0.98)},anchor= north west,font=\footnotesize},title={ $\method{6}{5}$},label style={font=\large},title style={font=\large},legend cell align={left},ymin=1e-13,ymax=1e-3]
		\addplot table[skip first n=1,x expr={0.5/\thisrowno{0}}, y expr={\thisrowno{6}}] {../Code/results/results_imexmd_parallel_rkrk6_kmax6_vdP_IMEX_eps0.1.csv};
		\addplot table[skip first n=1,x expr={0.5/\thisrowno{0}}, y expr={\thisrowno{6}}] {../Code/results/results_imexmd_parallel_rkrk6_kmax6_vdP_IMEX_eps0.01.csv};
		\addplot table[skip first n=1,x expr={0.5/\thisrowno{0}}, y expr={\thisrowno{6}}] {../Code/results/results_imexmd_parallel_rkrk6_kmax6_vdP_IMEX_eps0.001.csv};
		\addplot table[skip first n=1,x expr={0.5/\thisrowno{0}}, y expr={\thisrowno{6}}] {../Code/results/results_imexmd_parallel_rkrk6_kmax6_vdP_IMEX_eps0.0001.csv};
		\addplot table[skip first n=1,x expr={0.5/\thisrowno{0}}, y expr={\thisrowno{6}}] {../Code/results/results_imexmd_parallel_rkrk6_kmax6_vdP_IMEX_eps1e-05.csv};
		\addplot[color=black,thick,fill = white,forget plot] coordinates {(6e-2,1.0e-9) (6e-2,0.5^6*1.0e-9) (3e-2,0.5^6*1.0e-9) (6e-2,1.0e-9)}; \node at (axis cs: 0.075, 1.5e-10){$6$};
		\legend{$\eps=10^{-1}$,$\eps=10^{-2}$,$\eps=10^{-3}$,$\eps=10^{-4}$,$\eps=10^{-5}$}
		\end{loglogaxis}
		\end{tikzpicture}&
		\tikzsetnextfilename{limit2}
		\begin{tikzpicture}[scale=0.5]
		\begin{loglogaxis}[cycle list name=epslist,xlabel={$\Delta t$},grid=major,legend style={at={(0.02,0.98)},anchor= north west,font=\footnotesize},title={ $\method{6}{35}$},label style={font=\large},title style={font=\large},legend cell align={left},ymin=1e-13,ymax=1e-3]
		\addplot table[skip first n=1,x expr={0.5/\thisrowno{0}}, y expr={\thisrowno{36}}] {../Code/results/results_imexmd_parallel_rkrk6_kmax36_vdP_IMEX_eps0.1.csv};
		\addplot table[skip first n=1,x expr={0.5/\thisrowno{0}}, y expr={\thisrowno{36}}] {../Code/results/results_imexmd_parallel_rkrk6_kmax36_vdP_IMEX_eps0.01.csv};
		\addplot table[skip first n=1,x expr={0.5/\thisrowno{0}}, y expr={\thisrowno{36}}] {../Code/results/results_imexmd_parallel_rkrk6_kmax36_vdP_IMEX_eps0.001.csv};
		\addplot table[skip first n=1,x expr={0.5/\thisrowno{0}}, y expr={\thisrowno{36}}] {../Code/results/results_imexmd_parallel_rkrk6_kmax36_vdP_IMEX_eps0.0001.csv};
		\addplot table[skip first n=1,x expr={0.5/\thisrowno{0}}, y expr={\thisrowno{36}}] {../Code/results/results_imexmd_parallel_rkrk6_kmax36_vdP_IMEX_eps1e-05.csv};
		\addplot[color=black,thick,fill = white,forget plot] coordinates {(6e-2,1.0e-9) (6e-2,0.5^6*1.0e-9) (3e-2,0.5^6*1.0e-9) (6e-2,1.0e-9)}; \node at (axis cs: 0.075, 1.5e-10){$6$};
		\legend{$\eps=10^{-1}$,$\eps=10^{-2}$,$\eps=10^{-3}$,$\eps=10^{-4}$,$\eps=10^{-5}$}
		\end{loglogaxis}
		\end{tikzpicture}&
		\tikzsetnextfilename{limit3}
		\begin{tikzpicture}[scale=0.5]
		\begin{loglogaxis}[cycle list name=epslist,xlabel={$\Delta t$},grid=major,legend style={at={(0.02,0.98)},anchor= north west,font=\footnotesize},title={ $\method{6}{71}$},label style={font=\large},title style={font=\large},legend cell align={left},ymin=1e-13,ymax=1e-3]
		\addplot table[skip first n=1,x expr={0.5/\thisrowno{0}}, y expr={\thisrowno{72}}] {../Code/results/results_imexmd_parallel_rkrk6_kmax72_vdP_IMEX_eps0.1.csv};
		\addplot table[skip first n=1,x expr={0.5/\thisrowno{0}}, y expr={\thisrowno{72}}] {../Code/results/results_imexmd_parallel_rkrk6_kmax72_vdP_IMEX_eps0.01.csv};
		\addplot table[skip first n=1,x expr={0.5/\thisrowno{0}}, y expr={\thisrowno{72}}] {../Code/results/results_imexmd_parallel_rkrk6_kmax72_vdP_IMEX_eps0.001.csv};
		\addplot table[skip first n=1,x expr={0.5/\thisrowno{0}}, y expr={\thisrowno{72}}] {../Code/results/results_imexmd_parallel_rkrk6_kmax72_vdP_IMEX_eps0.0001.csv};
		\addplot table[skip first n=1,x expr={0.5/\thisrowno{0}}, y expr={\thisrowno{72}}] {../Code/results/results_imexmd_parallel_rkrk6_kmax72_vdP_IMEX_eps1e-05.csv};
		\addplot[color=black,thick,fill = white,forget plot] coordinates {(6e-2,1.0e-9) (6e-2,0.5^6*1.0e-9) (3e-2,0.5^6*1.0e-9) (6e-2,1.0e-9)}; \node at (axis cs: 0.075, 1.5e-10){$6$};
		\legend{$\eps=10^{-1}$,$\eps=10^{-2}$,$\eps=10^{-3}$,$\eps=10^{-4}$,$\eps=10^{-5}$}
		\end{loglogaxis}
		\end{tikzpicture}\\
		\tikzsetnextfilename{limit4}
		\begin{tikzpicture}[scale=0.5]
		\begin{loglogaxis}[cycle list name=epslist,xlabel={$\Delta t$},ylabel={$\|w-w_h\|_2$} ,grid=major,legend style={at={(0.02,0.98)},anchor= north west,font=\footnotesize},title={ $\method{6}{\infty}$},label style={font=\large},title style={font=\large},legend cell align={left},ymin=1e-13,ymax=1e-3]
		\addplot table[skip first n=1,x expr={0.5/\thisrowno{0}}, y expr={\thisrowno{1}}] {../Code/results/results_imexmd_serialrk6_limit_vdP_IMEX_eps0.1.csv};
		\addplot table[skip first n=1,x expr={0.5/\thisrowno{0}}, y expr={\thisrowno{1}}] {../Code/results/results_imexmd_serialrk6_limit_vdP_IMEX_eps0.01.csv};
		\addplot table[skip first n=1,x expr={0.5/\thisrowno{0}}, y expr={\thisrowno{1}}] {../Code/results/results_imexmd_serialrk6_limit_vdP_IMEX_eps0.001.csv};
		\addplot table[skip first n=1,x expr={0.5/\thisrowno{0}}, y expr={\thisrowno{1}}] {../Code/results/results_imexmd_serialrk6_limit_vdP_IMEX_eps0.0001.csv};
		\addplot table[skip first n=1,x expr={0.5/\thisrowno{0}}, y expr={\thisrowno{1}}] {../Code/results/results_imexmd_serialrk6_limit_vdP_IMEX_eps1e-05.csv};
		\addplot[color=black,thick,fill = white,forget plot] coordinates {(6e-2,1.0e-9) (6e-2,0.5^6*1.0e-9) (3e-2,0.5^6*1.0e-9) (6e-2,1.0e-9)}; \node at (axis cs: 0.075, 1.5e-10){$6$};
		\legend{$\eps=10^{-1}$,$\eps=10^{-2}$,$\eps=10^{-3}$,$\eps=10^{-4}$,$\eps=10^{-5}$}
		\end{loglogaxis}
		\end{tikzpicture}&
		\tikzsetnextfilename{limit5}
		\begin{tikzpicture}[scale=0.5]
		\begin{loglogaxis}[cycle list name=epslist,xlabel={$\Delta t$},grid=major,legend style={at={(0.02,0.98)},anchor= north west,font=\footnotesize},title={ $6^{th}$ order, limiting RK-method},label style={font=\large},title style={font=\large},legend cell align={left},ymin=1e-13,ymax=1e-3]
		\addplot table[skip first n=1,x expr={0.5/\thisrowno{0}}, y expr={\thisrowno{1}}] {../Code/results/RungeKutta/results_imexmd_rkrk6_kmax1_vdP_eps0.1.csv};
		\addplot table[skip first n=1,x expr={0.5/\thisrowno{0}}, y expr={\thisrowno{1}}] {../Code/results/RungeKutta/results_imexmd_rkrk6_kmax1_vdP_eps0.01.csv};
		\addplot table[skip first n=1,x expr={0.5/\thisrowno{0}}, y expr={\thisrowno{1}}] {../Code/results/RungeKutta/results_imexmd_rkrk6_kmax1_vdP_eps0.001.csv};
		\addplot table[skip first n=1,x expr={0.5/\thisrowno{0}}, y expr={\thisrowno{1}}] {../Code/results/RungeKutta/results_imexmd_rkrk6_kmax1_vdP_eps0.0001.csv};
		\addplot table[skip first n=1,x expr={0.5/\thisrowno{0}}, y expr={\thisrowno{1}}] {../Code/results/RungeKutta/results_imexmd_rkrk6_kmax1_vdP_eps1e-05.csv};
		\addplot[color=black,thick,fill = white,forget plot] coordinates {(6e-2,1.0e-9) (6e-2,0.5^6*1.0e-9) (3e-2,0.5^6*1.0e-9) (6e-2,1.0e-9)}; \node at (axis cs: 0.075, 1.5e-10){$6$};
		\legend{$\eps=10^{-1}$,$\eps=10^{-2}$,$\eps=10^{-3}$,$\eps=10^{-4}$,$\eps=10^{-5}$}
		\end{loglogaxis}
		\end{tikzpicture}
		\end{tabular}
        }
        {
		\begin{tabular}{ccc}
            \includegraphics{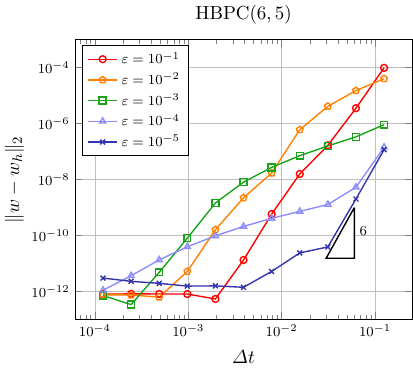}&
			\includegraphics{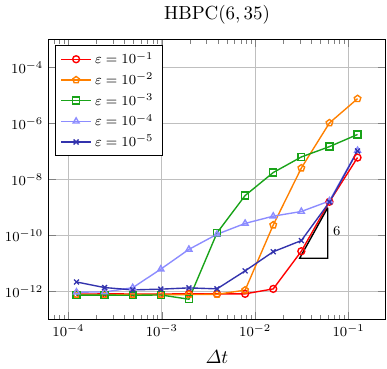}&
			\includegraphics{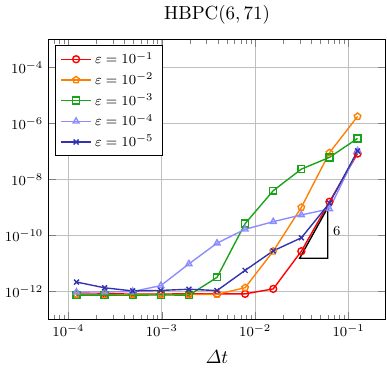}
		\end{tabular}
		\begin{tabular}{cc}
			\includegraphics{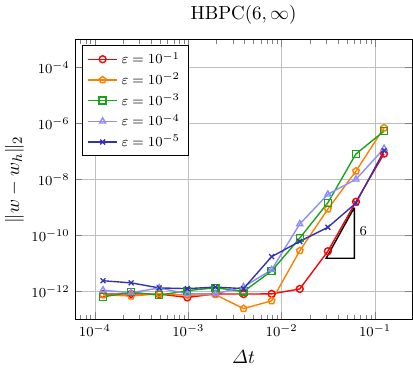}&
			\includegraphics{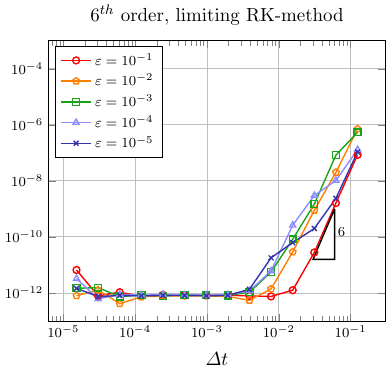}
		\end{tabular}
        }
	\caption{Error for van-der-Pol equation at $\Tend=0.5$ of simulations with $\method{6}{\km}$. Orders 4 and 6 are omitted for brevity, but show similar behavior.  In each image, we increase the level of stiffness of the problem by shrinking $\eps$, and in each frame we consider the impact of increasing the total number of correction steps $\km$.
	The last picture (second on the second line) shows the results of solving the fully coupled limiting Runge-Kutta method, (cf. Def.~\ref{def:RKlimit}). The first image on the second line shows the numerically determined limit if $\km \rightarrow \infty$. This is in excellent agreement with the underlying Runge-Kutta method in the right frame, as we expect to be the case when the iterates converge. }\label{fig:Vdp_eps}
\end{figure}



In Fig.~\ref{fig:Vdp_eps} the results of the simulation are summarized for the $6^{th}$-order method. For an increasing stiffness, order reduction can be observed. This can, at least partially, be cured by increasing the number of correction steps; perfect convergence cannot be observed. This is consistent with the limiting method and one can see that with $\km\rightarrow\infty$ the limiting method is reached. The results also extend to the $4^{th}$- and $8^{th}$-order scheme (not shown here for the sake of brevity): small values of $\km$ produce order reduction for some regimes. The effect can be mitigated by increasing the number of iterates with $\km \to \infty$. 
For the $4^{th}$-order limiting-scheme, order reduction is completely ruled out, which is not the case for $6^{th}$ and $8^{th}$-order schemes.

\section{Improved algorithm and scaling results}\label{sec:ModificationsAlg}
Given the flexibility of the solver, there are several opportunities to enhance the proposed Alg.~\ref{alg:mdode}. In this subsection, we present one such variation that improves the accuracy and scalability of the solver by making a total of two modifications:

\begin{itemize}
	\item \textbf{Improved predictor.}  The predictor in Alg.~\ref{alg:mdode} is completely independent of the correction steps, as the used initial condition is given by the previously computed predictor results.  A minor modification would be to use the more accurate result from the first (or higher) correction from the previous time step. This is advantageous whenever the main computational load lies on the predictor.  This could happen, e.g., if the number of Newton iterations for the predictor is higher than for the corrector, which is often observed in practice.  Moreover, the predictor could, in principle, be leveraged as the cornerstone for adaptive timestepping, which is crucial for many large-scale applications. 
	%
	Based on our observations, we find we can accomplish this and have a predictor that is overall  third-order accurete in time, which reduces the total number of corrections required.
	\item \textbf{Improved quadrature.}  The $l-$th stage of the $(k)$-th corrector with $l > 1$ uses a
	 quadrature rule $\II_l$ in Alg.~\ref{alg:mdode} that is still fully computed with values from correction step $k$, although for $\iota < l$, the values $\algs w n {k+1} \iota$ are readily available. If we consider a Gauss-Seidel type approach, similar to the one found in \cite{Crockatt2018}, we change this so that in $\II_l$, we replace $\algs w n {k} \iota$ by $\algs w n {k+1} \iota$ for $\iota < l$. Here, we show that that this not only leads to a more accurate scheme, but it is also lower storage.
\end{itemize}
We formalize the proposed changes to Alg.~\ref{alg:mdode} as its own algorithm.

\begin{algorithm}[$\methodt{q,\km}$]\label{alg:mdodejonas}
	To advance the solution to Eq.~\eqref{eq:ode} in time, we compute values $\algs w n {k } l$. The meaning of the parameters $n$, $k$ and $l$ is explained in Eq.~\eqref{eq:wnkl} and thereafter. 
	To account for the initial conditions, define 
	\begin{align*} 
	\algs w {-1} k s := w_0. 
	\end{align*}
	First, the values $\algs w n 0 l$ are filled using a straightforward second-order IMEX-Taylor method with an improved initial condition. 
	\begin{enumerate}
		\item \textbf{Predict.} Solve the following expression for $\algs w n 0 l$ and $1 \leq l \leq s$: 
		\begin{align} 
		\begin{split}
		\label{eq:predictmod}
		\algs w {n} 0 l &:= {\color{blue}\algs w {n-1} 1 s} \\
		&+ c_l\dt \left(\algs {\Phii} n 0 l + {\color{blue}\algs{\Phie} {n-1} 1 s}\right) 
		+ \frac{(c_l\dt)^2}{2} \left({\color{blue}\algs {\dPhie} {n-1} 1 s} - \algs {\dPhii} n 0 l\right).
		\end{split}
		\end{align}  
		The modified terms (${\algs w {n-1} 1 s}, {\algs{\Phie} {n-1} 1 s}$, and ${\algs{\dPhie} {n-1} 1 s}$, marked in blue) simply use one higher iterate than the one found in Alg.~\ref{alg:mdode}.  This makes the predictor third-order accurate due to its improved local truncation error.
		\item \textbf{Correct.} Next, the correction steps take place to fill the values of $\algs w n k l$ for $1 \leq k \leq \km$.  Solve the following for $\algs w {n} {k+1} l$, for each $2 \leq l \leq s$ and each $0 \leq k < k_{\max}$:
		\begin{align}
		\begin{split}
		\label{eq:correctmod}
		\algs w {n} {k+1} 1 &:= {\color{VioletRed}\algs w {n-1} {k+2} s}, \\
		\algs w {n} {k+1} l &:= {\color{VioletRed}\algs w {n-1} {k+2} s} \\ 
		&
		+ {\dt} \left( \algs {\Phii} {n} {k+1} l - \algs {\Phii} n {k} l\right) 
		- \frac{\dt^2}{2} \left(\algs {\dPhii} {n} {k+1} l- \algs {\dPhii} {n} {k} l\right)
		\\ 
		&+\; \II_l(\color{blue}\algs \Phi{n}{k+1} 0, \ldots,\algs \Phi n {k+1} {l-1},\color{black} \algs \Phi n {k} l, \ldots, \algs \Phi n {k} s)  
		\end{split}
		\end{align}
		The red term(s) ($\algs w {n-1} {k+2} s$) are the same modified values we implemented in Alg.~\ref{alg:mdode}.  
		The blue terms $\algs \Phi{n}{k+1} 0, \ldots,\algs \Phi n {k+1} {l-1}$ are evaluated at iterate $k+1$ instead of $k$.  This produces smaller errors and requires less storage.
		Again, if $k = k_{\max} -1$, then the $k+2$ in the red terms are replaced by $\km$ in order to close the recursion.
		
		\item \textbf{Update.} In order to retain a first-same-as-last property, we update the solution with
		\begin{align*} 
		w^{n+1} := \algs w n {k_{\max}} s.
		\end{align*}
	\end{enumerate}	
\end{algorithm}
The parallelization strategy suggested in Fig.~\ref{fig:timeparallel} remains largely unaltered due to the suggested grouping.  Given that the first thread operates on both the $k=0$ and the $k=1$ iterates, the predictor simply draws information from the $k=1$ iterate instead of the $k=0$ iterate.  The corrector also remains unaltered - in fact there is slightly looser coupling given that the quadrature rule uses information that is being updated.

In the following, we explore the impact of the further modifications to the algorithm numerically. We find that in most cases the modified algorithm behaves better than the standard one. Scaling results are therefore given on the basis of Alg.~\ref{alg:mdodejonas} only.

\subsection{Influence of the modifications}
We study the influence of the two modifications with previously considered Pareschi-Russo problem (Eq.~\eqref{eq:PR_problem}) with $\eps=1$. In Fig.~\ref{fig:PR_predictor} the results with Alg.~\ref{alg:mdode} and the modified Alg.~\ref{alg:mdodejonas} are compared side-by-side. Although the modification in the predictor and the corrector are presented simultaneously, we carry out a separated analysis of both changes in order to highlight the individual influence.
\pgfplotstableset{col sep=comma}
\begin{figure}
	\centering
	\begin{tabular}{cc}
        \ifthenelse{\boolean{compilefromscratch}}{
        
        \tikzsetnextfilename{results_imexmd_parallel_rkrk4_kmax10_PR_IMEX_eps1}
		\begin{tikzpicture}[scale=0.65]
			\begin{loglogaxis}[cycle list name=green,xlabel={$\Delta t$},ylabel={$\|w-w_h\|_2$} ,grid=major,legend style={at={(1.02,0.5)},anchor= west,font=\footnotesize},
			title={  $\method{4}{9}$, Alg.~1},
			label style={font=\large},title style={font=\large},legend cell align={left},ymin=1e-11,ymax=10]
			\addplot table[skip first n=1,x expr={2/\thisrowno{0}}, y expr={\thisrowno{1}}] {../Code/results/results_imexmd_parallel_rkrk4_kmax10_PR_IMEX_eps1.csv};
			\addplot table[skip first n=1,x expr={2/\thisrowno{0}}, y expr={\thisrowno{2}}] {../Code/results/results_imexmd_parallel_rkrk4_kmax10_PR_IMEX_eps1.csv};
			\addplot table[skip first n=1,x expr={2/\thisrowno{0}}, y expr={\thisrowno{3}}] {../Code/results/results_imexmd_parallel_rkrk4_kmax10_PR_IMEX_eps1.csv};
			\addplot table[skip first n=1,x expr={2/\thisrowno{0}}, y expr={\thisrowno{4}}] {../Code/results/results_imexmd_parallel_rkrk4_kmax10_PR_IMEX_eps1.csv};
			\addplot table[skip first n=1,x expr={2/\thisrowno{0}}, y expr={\thisrowno{5}}] {../Code/results/results_imexmd_parallel_rkrk4_kmax10_PR_IMEX_eps1.csv};
			\addplot table[skip first n=1,x expr={2/\thisrowno{0}}, y expr={\thisrowno{6}}] {../Code/results/results_imexmd_parallel_rkrk4_kmax10_PR_IMEX_eps1.csv};
			\addplot table[skip first n=1,x expr={2/\thisrowno{0}}, y expr={\thisrowno{7}}] {../Code/results/results_imexmd_parallel_rkrk4_kmax10_PR_IMEX_eps1.csv};
			\addplot table[skip first n=1,x expr={2/\thisrowno{0}}, y expr={\thisrowno{8}}] {../Code/results/results_imexmd_parallel_rkrk4_kmax10_PR_IMEX_eps1.csv};
			\addplot table[skip first n=1,x expr={2/\thisrowno{0}}, y expr={\thisrowno{9}}] {../Code/results/results_imexmd_parallel_rkrk4_kmax10_PR_IMEX_eps1.csv};
			\addplot table[skip first n=1,x expr={2/\thisrowno{0}}, y expr={\thisrowno{10}}] {../Code/results/results_imexmd_parallel_rkrk4_kmax10_PR_IMEX_eps1.csv};
			\addplot[color=black,thick,fill = white,forget plot] coordinates {(2e-2,4.5e-4) (2e-2,0.5^2*4.5e-4) (1e-2,0.5^2*4.5e-4) (2e-2,4.5e-4)}; \node at (axis cs: 2.3e-2, 2.0e-4){$2$};
			\addplot[color=black,thick,fill = white,forget plot] coordinates {(2e-2,1.5e-5) (2e-2,0.5^3*1.5e-5) (1e-2,0.5^3*1.5e-5) (2e-2,1.5e-5)}; \node at (axis cs: 1.8e-2, 4.0e-6){$3$};
			\addplot[color=black,thick,fill = white,forget plot] coordinates {(2e-2,6.5e-7) (2e-2,0.5^4*6.5e-7) (1e-2,0.5^4*6.5e-7) (2e-2,6.5e-7)}; \node at (axis cs: 1.8e-2, 1.2e-7){$4$};
			\addplot[color=black,thick,fill = white,forget plot] coordinates {(2e-2,1.5e-8) (2e-2,0.5^4*1.5e-8) (1e-2,0.5^4*1.5e-8) (2e-2,1.5e-8)}; \node at (axis cs: 1.8e-2, 3.0e-9){$4$};
			\end{loglogaxis}
		\end{tikzpicture}&
		\tikzsetnextfilename{results_imexmd_parallel_rkrk4_kmax10_PR_IMEX_eps1}
		\tikzsetnextfilename{results_alg2_imexmd_parallel_rkrk4_kmax10_PR_IMEX_eps1}
		\begin{tikzpicture}[scale=0.65]
			\begin{loglogaxis}[cycle list name=green,xlabel={$\Delta t$},grid=major,legend style={at={(1.02,0.5)},anchor= west,font=\footnotesize},
			title={  $\methodt{4}{9}$, Alg.~2},
			label style={font=\large},title style={font=\large},legend cell align={left},ymin=1e-11,ymax=10]
			\addplot table[skip first n=1,x expr={2/\thisrowno{0}}, y expr={\thisrowno{1}}] {../Code/results_alg2/results_imexmd_parallel_rkrk4_kmax10_PR_IMEX_eps1.csv};
			\addplot table[skip first n=1,x expr={2/\thisrowno{0}}, y expr={\thisrowno{2}}] {../Code/results_alg2/results_imexmd_parallel_rkrk4_kmax10_PR_IMEX_eps1.csv};
			\addplot table[skip first n=1,x expr={2/\thisrowno{0}}, y expr={\thisrowno{3}}] {../Code/results_alg2/results_imexmd_parallel_rkrk4_kmax10_PR_IMEX_eps1.csv};
			\addplot table[skip first n=1,x expr={2/\thisrowno{0}}, y expr={\thisrowno{4}}] {../Code/results_alg2/results_imexmd_parallel_rkrk4_kmax10_PR_IMEX_eps1.csv};
			\addplot table[skip first n=1,x expr={2/\thisrowno{0}}, y expr={\thisrowno{5}}] {../Code/results_alg2/results_imexmd_parallel_rkrk4_kmax10_PR_IMEX_eps1.csv};
			\addplot table[skip first n=1,x expr={2/\thisrowno{0}}, y expr={\thisrowno{6}}] {../Code/results_alg2/results_imexmd_parallel_rkrk4_kmax10_PR_IMEX_eps1.csv};
			\addplot table[skip first n=1,x expr={2/\thisrowno{0}}, y expr={\thisrowno{7}}] {../Code/results_alg2/results_imexmd_parallel_rkrk4_kmax10_PR_IMEX_eps1.csv};
			\addplot table[skip first n=1,x expr={2/\thisrowno{0}}, y expr={\thisrowno{8}}] {../Code/results_alg2/results_imexmd_parallel_rkrk4_kmax10_PR_IMEX_eps1.csv};
			\addplot table[skip first n=1,x expr={2/\thisrowno{0}}, y expr={\thisrowno{9}}] {../Code/results_alg2/results_imexmd_parallel_rkrk4_kmax10_PR_IMEX_eps1.csv};
			\addplot table[skip first n=1,x expr={2/\thisrowno{0}}, y expr={\thisrowno{10}}] {../Code/results_alg2/results_imexmd_parallel_rkrk4_kmax10_PR_IMEX_eps1.csv};
			\addplot[color=black,thick,fill = white,forget plot] coordinates {(2e-2,4.5e-6) (2e-2,0.5^3*4.5e-6) (1e-2,0.5^3*4.5e-6) (2e-2,4.5e-6)}; \node at (axis cs: 1.8e-2, 1.3e-6){$3$};
			\addplot[color=black,thick,fill = white,forget plot] coordinates {(2e-2,1.5e-8) (2e-2,0.5^4*1.5e-8) (1e-2,0.5^4*1.5e-8) (2e-2,1.5e-8)}; \node at (axis cs: 1.8e-2, 3.0e-9){$4$};
			\legend{$k=0$,$k=1$,$k=2$,$k=3$,$k=4$,$k=5$,$k=6$,$k=7$,$k=8$,$k=9$}
			\end{loglogaxis}
		\end{tikzpicture}\\
		\tikzsetnextfilename{results_imexmd_parallel_rkrk6_kmax10_PR_IMEX_eps1}
		\begin{tikzpicture}[scale=0.65]
			\begin{loglogaxis}[cycle list name=green,xlabel={$\Delta t$},ylabel={$\|w-w_h\|_2$} ,grid=major,legend style={at={(1.02,0.5)},anchor= west,font=\footnotesize},
			title={  $\method{6}{9}$, Alg.~1},
			label style={font=\large},title style={font=\large},legend cell align={left},ymin=1e-14,ymax=10]
			\addplot table[skip first n=1,x expr={2/\thisrowno{0}}, y expr={\thisrowno{1}}] {../Code/results/results_imexmd_parallel_rkrk6_kmax10_PR_IMEX_eps1.csv};
			\addplot table[skip first n=1,x expr={2/\thisrowno{0}}, y expr={\thisrowno{2}}] {../Code/results/results_imexmd_parallel_rkrk6_kmax10_PR_IMEX_eps1.csv};
			\addplot table[skip first n=1,x expr={2/\thisrowno{0}}, y expr={\thisrowno{3}}] {../Code/results/results_imexmd_parallel_rkrk6_kmax10_PR_IMEX_eps1.csv};
			\addplot table[skip first n=1,x expr={2/\thisrowno{0}}, y expr={\thisrowno{4}}] {../Code/results/results_imexmd_parallel_rkrk6_kmax10_PR_IMEX_eps1.csv};
			\addplot table[skip first n=1,x expr={2/\thisrowno{0}}, y expr={\thisrowno{5}}] {../Code/results/results_imexmd_parallel_rkrk6_kmax10_PR_IMEX_eps1.csv};
			\addplot table[skip first n=1,x expr={2/\thisrowno{0}}, y expr={\thisrowno{6}}] {../Code/results/results_imexmd_parallel_rkrk6_kmax10_PR_IMEX_eps1.csv};
			\addplot table[skip first n=1,x expr={2/\thisrowno{0}}, y expr={\thisrowno{7}}] {../Code/results/results_imexmd_parallel_rkrk6_kmax10_PR_IMEX_eps1.csv};
			\addplot table[skip first n=1,x expr={2/\thisrowno{0}}, y expr={\thisrowno{8}}] {../Code/results/results_imexmd_parallel_rkrk6_kmax10_PR_IMEX_eps1.csv};
			\addplot table[skip first n=1,x expr={2/\thisrowno{0}}, y expr={\thisrowno{9}}] {../Code/results/results_imexmd_parallel_rkrk6_kmax10_PR_IMEX_eps1.csv};
			\addplot table[skip first n=1,x expr={2/\thisrowno{0}}, y expr={\thisrowno{10}}] {../Code/results/results_imexmd_parallel_rkrk6_kmax10_PR_IMEX_eps1.csv};
			\addplot[color=black,thick,fill = white,forget plot] coordinates {(2e-2,4.5e-4) (2e-2,0.5^2*4.5e-4) (1e-2,0.5^2*4.5e-4) (2e-2,4.5e-4)}; \node at (axis cs: 2.3e-2, 2.0e-4){$2$};
			\addplot[color=black,thick,fill = white,forget plot] coordinates {(9e-2,1.1e-9) (9e-2,0.5^6*1.1e-9) (4.5e-2,0.5^6*1.1e-9) (9e-2,1.1e-9)}; \node at (axis cs: 0.08, 1.1e-10){$6$};
			\end{loglogaxis}
		\end{tikzpicture}&
		\tikzsetnextfilename{results_alg2_imexmd_parallel_rkrk6_kmax10_PR_IMEX_eps1}
		\begin{tikzpicture}[scale=0.65]
			\begin{loglogaxis}[cycle list name=green,xlabel={$\Delta t$},grid=major,legend style={at={(1.02,0.5)},anchor= west,font=\footnotesize},
			title={  $\methodt{6}{9}$, Alg.~2},
			label style={font=\large},title style={font=\large},legend cell align={left},ymin=1e-14,ymax=10]
			\addplot table[skip first n=1,x expr={2/\thisrowno{0}}, y expr={\thisrowno{1}}] {../Code/results_alg2/results_imexmd_parallel_rkrk6_kmax10_PR_IMEX_eps1.csv};
			\addplot table[skip first n=1,x expr={2/\thisrowno{0}}, y expr={\thisrowno{2}}] {../Code/results_alg2/results_imexmd_parallel_rkrk6_kmax10_PR_IMEX_eps1.csv};
			\addplot table[skip first n=1,x expr={2/\thisrowno{0}}, y expr={\thisrowno{3}}] {../Code/results_alg2/results_imexmd_parallel_rkrk6_kmax10_PR_IMEX_eps1.csv};
			\addplot table[skip first n=1,x expr={2/\thisrowno{0}}, y expr={\thisrowno{4}}] {../Code/results_alg2/results_imexmd_parallel_rkrk6_kmax10_PR_IMEX_eps1.csv};
			\addplot table[skip first n=1,x expr={2/\thisrowno{0}}, y expr={\thisrowno{5}}] {../Code/results_alg2/results_imexmd_parallel_rkrk6_kmax10_PR_IMEX_eps1.csv};
			\addplot table[skip first n=1,x expr={2/\thisrowno{0}}, y expr={\thisrowno{6}}] {../Code/results_alg2/results_imexmd_parallel_rkrk6_kmax10_PR_IMEX_eps1.csv};
			\addplot table[skip first n=1,x expr={2/\thisrowno{0}}, y expr={\thisrowno{7}}] {../Code/results_alg2/results_imexmd_parallel_rkrk6_kmax10_PR_IMEX_eps1.csv};
			\addplot table[skip first n=1,x expr={2/\thisrowno{0}}, y expr={\thisrowno{8}}] {../Code/results_alg2/results_imexmd_parallel_rkrk6_kmax10_PR_IMEX_eps1.csv};
			\addplot table[skip first n=1,x expr={2/\thisrowno{0}}, y expr={\thisrowno{9}}] {../Code/results_alg2/results_imexmd_parallel_rkrk6_kmax10_PR_IMEX_eps1.csv};
			\addplot table[skip first n=1,x expr={2/\thisrowno{0}}, y expr={\thisrowno{10}}] {../Code/results_alg2/results_imexmd_parallel_rkrk6_kmax10_PR_IMEX_eps1.csv};
			\addplot[color=black,thick,fill = white,forget plot] coordinates {(2e-2,4.5e-6) (2e-2,0.5^3*4.5e-6) (1e-2,0.5^3*4.5e-6) (2e-2,4.5e-6)}; \node at (axis cs: 1.8e-2, 1.3e-6){$3$};
			\addplot[color=black,thick,fill = white,forget plot] coordinates {(9e-2,1.1e-9) (9e-2,0.5^6*1.1e-9) (4.5e-2,0.5^6*1.1e-9) (9e-2,1.1e-9)}; \node at (axis cs: 0.08, 1.1e-10){$6$};
			\legend{$k=0$,$k=1$,$k=2$,$k=3$,$k=4$,$k=5$,$k=6$,$k=7$,$k=8$,$k=9$}
			\end{loglogaxis}
		\end{tikzpicture}\\
		\tikzsetnextfilename{results_imexmd_parallel_rkrk8_kmax10_PR_IMEX_eps1} 
		\begin{tikzpicture}[scale=0.65]
			\begin{loglogaxis}[cycle list name=green,xlabel={$\Delta t$},ylabel={$\|w-w_h\|_2$} ,grid=major,legend style={at={(1.02,0.5)},anchor= west,font=\footnotesize},
			title={  $\method{8}{9}$, Alg.~1},
			label style={font=\large},title style={font=\large},legend cell align={left},ymin=1e-14,ymax=10]
			\addplot table[skip first n=1,x expr={2/\thisrowno{0}}, y expr={\thisrowno{1}}] {../Code/results/results_imexmd_parallel_rkrk8_kmax10_PR_IMEX_eps1.csv};
			\addplot table[skip first n=1,x expr={2/\thisrowno{0}}, y expr={\thisrowno{2}}] {../Code/results/results_imexmd_parallel_rkrk8_kmax10_PR_IMEX_eps1.csv};
			\addplot table[skip first n=1,x expr={2/\thisrowno{0}}, y expr={\thisrowno{3}}] {../Code/results/results_imexmd_parallel_rkrk8_kmax10_PR_IMEX_eps1.csv};
			\addplot table[skip first n=1,x expr={2/\thisrowno{0}}, y expr={\thisrowno{4}}] {../Code/results/results_imexmd_parallel_rkrk8_kmax10_PR_IMEX_eps1.csv};
			\addplot table[skip first n=1,x expr={2/\thisrowno{0}}, y expr={\thisrowno{5}}] {../Code/results/results_imexmd_parallel_rkrk8_kmax10_PR_IMEX_eps1.csv};
			\addplot table[skip first n=1,x expr={2/\thisrowno{0}}, y expr={\thisrowno{6}}] {../Code/results/results_imexmd_parallel_rkrk8_kmax10_PR_IMEX_eps1.csv};
			\addplot table[skip first n=1,x expr={2/\thisrowno{0}}, y expr={\thisrowno{7}}] {../Code/results/results_imexmd_parallel_rkrk8_kmax10_PR_IMEX_eps1.csv};
			\addplot table[skip first n=1,x expr={2/\thisrowno{0}}, y expr={\thisrowno{8}}] {../Code/results/results_imexmd_parallel_rkrk8_kmax10_PR_IMEX_eps1.csv};
			\addplot table[skip first n=1,x expr={2/\thisrowno{0}}, y expr={\thisrowno{9}}] {../Code/results/results_imexmd_parallel_rkrk8_kmax10_PR_IMEX_eps1.csv};
			\addplot table[skip first n=1,x expr={2/\thisrowno{0}}, y expr={\thisrowno{10}}] {../Code/results/results_imexmd_parallel_rkrk8_kmax10_PR_IMEX_eps1.csv};
			\addplot[color=black,thick,fill = white,forget plot] coordinates {(2e-2,4.5e-4) (2e-2,0.5^2*4.5e-4) (1e-2,0.5^2*4.5e-4) (2e-2,4.5e-4)}; \node at (axis cs: 2.3e-2, 2.0e-4){$2$};
			\addplot[color=black,thick,fill = white,forget plot] coordinates {(1.2e-1,2.5e-9) (1.2e-1,0.5^8*2.5e-9) (6.0e-2,0.5^8*2.5e-9) (1.2e-1,2.5e-9)}; \node at (axis cs: 0.10, 1.0e-10){$8$};
			\end{loglogaxis}
		\end{tikzpicture}&
		\tikzsetnextfilename{results_alg2_imexmd_parallel_rkrk8_kmax10_PR_IMEX_eps1} 
		\begin{tikzpicture}[scale=0.65]
			\begin{loglogaxis}[cycle list name=green,xlabel={$\Delta t$},grid=major,legend style={at={(1.02,0.5)},anchor= west,font=\footnotesize},
			title={  $\methodt{8}{9}$, Alg.~2},
			label style={font=\large},title style={font=\large},legend cell align={left},ymin=1e-14,ymax=10]
			\addplot table[skip first n=1,x expr={2/\thisrowno{0}}, y expr={\thisrowno{1}}]
			{../Code/results_alg2/results_imexmd_parallel_rkrk8_kmax10_PR_IMEX_eps1.csv};
			\addplot table[skip first n=1,x expr={2/\thisrowno{0}}, y expr={\thisrowno{2}}] {../Code/results_alg2/results_imexmd_parallel_rkrk8_kmax10_PR_IMEX_eps1.csv};
			\addplot table[skip first n=1,x expr={2/\thisrowno{0}}, y expr={\thisrowno{3}}] {../Code/results_alg2/results_imexmd_parallel_rkrk8_kmax10_PR_IMEX_eps1.csv};
			\addplot table[skip first n=1,x expr={2/\thisrowno{0}}, y expr={\thisrowno{4}}] {../Code/results_alg2/results_imexmd_parallel_rkrk8_kmax10_PR_IMEX_eps1.csv};
			\addplot table[skip first n=1,x expr={2/\thisrowno{0}}, y expr={\thisrowno{5}}] {../Code/results_alg2/results_imexmd_parallel_rkrk8_kmax10_PR_IMEX_eps1.csv};
			\addplot table[skip first n=1,x expr={2/\thisrowno{0}}, y expr={\thisrowno{6}}] {../Code/results_alg2/results_imexmd_parallel_rkrk8_kmax10_PR_IMEX_eps1.csv};
			\addplot table[skip first n=1,x expr={2/\thisrowno{0}}, y expr={\thisrowno{7}}] {../Code/results_alg2/results_imexmd_parallel_rkrk8_kmax10_PR_IMEX_eps1.csv};
			\addplot table[skip first n=1,x expr={2/\thisrowno{0}}, y expr={\thisrowno{8}}] {../Code/results_alg2/results_imexmd_parallel_rkrk8_kmax10_PR_IMEX_eps1.csv};
			\addplot table[skip first n=1,x expr={2/\thisrowno{0}}, y expr={\thisrowno{9}}] {../Code/results_alg2/results_imexmd_parallel_rkrk8_kmax10_PR_IMEX_eps1.csv};
			\addplot table[skip first n=1,x expr={2/\thisrowno{0}}, y expr={\thisrowno{10}}] {../Code/results_alg2/results_imexmd_parallel_rkrk8_kmax10_PR_IMEX_eps1.csv};
			\addplot[color=black,thick,fill = white,forget plot] coordinates {(2e-2,4.5e-6) (2e-2,0.5^3*4.5e-6) (1e-2,0.5^3*4.5e-6) (2e-2,4.5e-6)}; \node at (axis cs: 1.8e-2, 1.3e-6){$3$};
			\addplot[color=black,thick,fill = white,forget plot] coordinates {(1.2e-1,2.5e-9) (1.2e-1,0.5^8*2.5e-9) (6.0e-2,0.5^8*2.5e-9) (1.2e-1,2.5e-9)}; \node at (axis cs: 0.10, 1.0e-10){$8$};
			\legend{$k=0$,$k=1$,$k=2$,$k=3$,$k=4$,$k=5$,$k=6$,$k=7$,$k=8$,$k=9$}
			\end{loglogaxis}
		\end{tikzpicture}
		}
		{
            \includegraphics{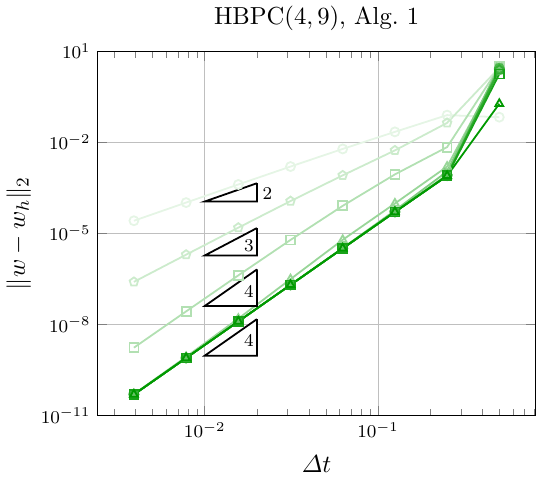}&
            \includegraphics{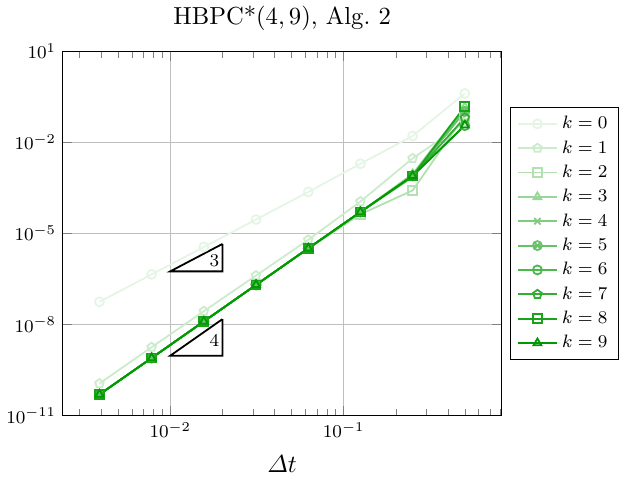}\\
            \includegraphics{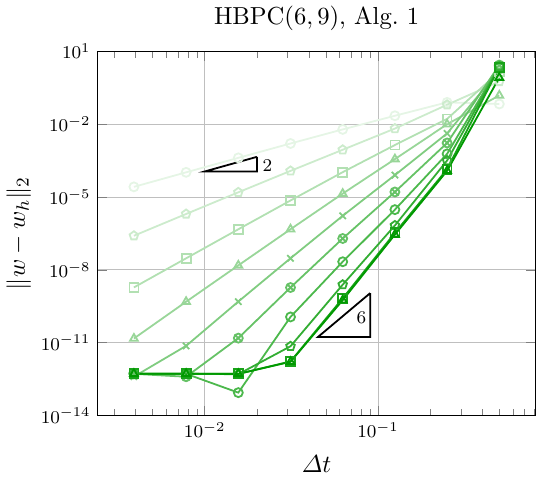}&
            \includegraphics{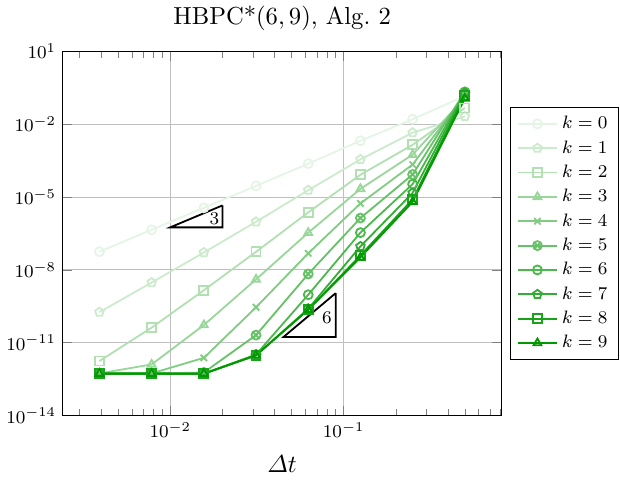} \\
            \includegraphics{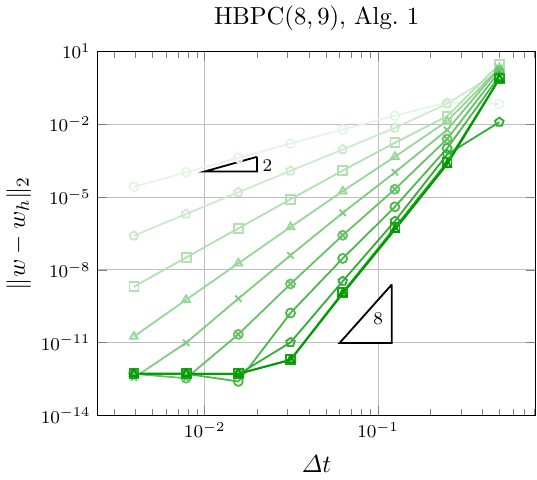}&
            \includegraphics{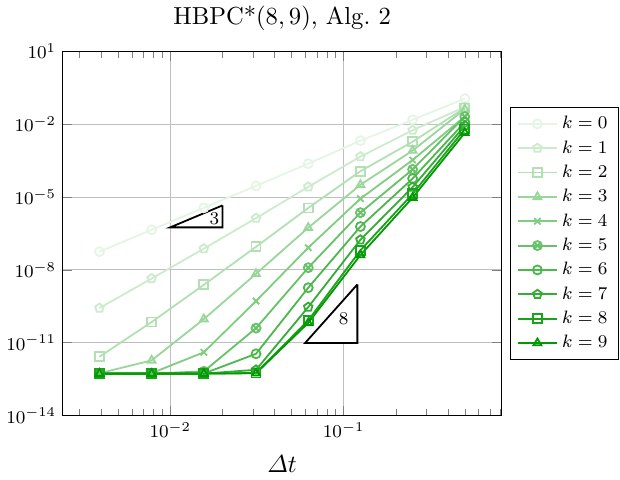} 
		}
	\end{tabular}
	\caption{Error for Pareschi-Russo IMEX problem~\cite{pareschi2000implicit} with $\eps=1$ at $\Tend=5$ with our IMEX-MD schemes. We use a fixed $\km = 9$ for all the simulations.  We display the impact of increasing the iterate number $k$ and the fact that each additional correction improves the overall order of accuracy by one, save the final one.  We compare side-by-side the analytically investigated Alg.~\ref{alg:mdode} (Left column) and the modified Alg.~\ref{alg:mdodejonas} (Right column).  The modifications made for Alg.~\ref{alg:mdodejonas} increase the order of the predictor and therefore the following correction steps by one until the maximum order of convergence is reached.  Moreover, the errors especially for large timesteps are significantly reduced with Alg.~\ref{alg:mdodejonas}.}\label{fig:PR_predictor}
\end{figure}

We make the following observations concerning the additional modifications:
\begin{itemize}
\item \textbf{Modification of the predictor.}
The modification of the predictor has the drawback that the predictor loses its independence from the correction steps as it now uses the solution of the first corrector step as the solution at $t^n$.
In Fig.~\ref{fig:PR_predictor} we see the influence of modifying the predictor step: the predictor's order of accuracy is increased from second to third order. This results in an additional order of the following correction steps, of course only until the maximum achievable order is reached. For the $6^{th}$- and $8^{th}$-order method we additionally observe improvements in the error found after the last iteration.

\item \textbf{Modification of the corrector.}
The primary advantage in the modification in the corrector is that less overall storage is required for the solver. Simply put, as the stage values of the previous iteration are replaced with the stage values from the current iteration $k+1$, no additional storage array for the old stage values is required. Moreover, potentially better values are used for the quadrature rule. We call this modification ``Gau\ss-Seidel style" as ``better" information is used as soon as it is available~\cite{Saad2003}. Note that similar ideas have been pursued for an integral deferred correction method in~\cite{Crockatt2018}.
\end{itemize}

In Fig.~\ref{fig:PR_predictor} we can also observe the influence of using the Gau\ss-Seidel style corrector by comparing the iterates to each other.  The improved quadrature significantly decreased the error. This effect is especially visible for large timesteps since each correction step improves the solution with $\mathcal{O}(\Delta t)$ (of course only until the maximum order of convergence is reached).  Although not shown here, this effect is even more pronounced for stiff problems. Moreover, the required amount of Newton iterations is significantly reduced for such cases that require large timesteps.  This could certainly be beneficial for multiscale problems.  Hence, using values of the predictor that are ``one correction step better'' as soon as they are available facilitates the solution of the non-linear problem and at the same time allows for more accurate results.

\subsection{Parallel performance}

We use the improved Alg.~\ref{alg:mdodejonas} to test the parallel performance of the proposed method.  In order to do this, we run Alg.~\ref{alg:mdodejonas} in both serial and parallel settings. The method is implemented in \texttt{MATLAB} using the parallel computing toolbox \cite{ParallelToolbox}. Calculations are done on one Intel skylake node with 2 Xeon Gold 6140 CPUs@2.3 GHz, with 18 cores each, provided by the Vlaams Supercomputing Centrum (VSC). 
The algorithm necessitates the solution of a non-linear system of equations 
\begin{align*}
F(\tilde{w}):=f(\tilde{w})-\text{rhs}=0,  
\end{align*}
with
\begin{align*}
	\tilde{w}&=\algs{w}{n}{0}{l},\quad f(\tilde{w})=\algs{w}{n}{0}{l}-c_l\Delta t\left(\algs{\Phii}{n}{0}{l}\right)+\frac{(c_l\Delta t)^2}{2}\left(\algs{\dPhii}{n}{0}{l}\right),\\
	\text{rhs}&=\algs{w}{n-1}{1}{s}+c_l\Delta t\left(\algs{\Phie}{n-1}{1}{s}\right)+\frac{(c_l\Delta t)^2}{2}\left(\algs{\dPhie}{n-1}{1}{s}\right),
\end{align*}
for the predictor and
\begin{align*}
	\tilde{w}&=\algs{w}{n}{k+1}{l},\quad f(\tilde{w})=\algs{w}{n}{k+1}{l}-\Delta t\left(\algs{\Phii}{n}{k+1}{l}\right)+\frac{\Delta t^2}{2}\left(\algs{\dPhii}{n}{k+1}{l}\right),\\
	\text{rhs}&=\algs{w}{n-1}{k+2}{s}-\Delta t\left(\algs{\Phii}{n}{k}{l}\right)+\frac{\Delta t^2}{2}\left(\algs{\dPhii}{n}{k}{l}\right)+\mathcal{I}_l,
\end{align*}
for the corrector. We choose to solve this using a damped Newton's method with starting point $\tilde{w}_0=\algs{w}{n-1}{1}{s}$ for the predictor and $\tilde{w}_0=\algs{w}{n-1}{k+2}{s}$ for the corrector. We define a relative convergence criterion of
\begin{align*}
	\frac{\|F(\tilde{w})\|_2}{\|F(\tilde{w}_0)\|_2}\le\eps_{\text{Newton}}=10^{-6}
\end{align*}
and an absolute convergence criterion of $\|F(\tilde{w})\|_2\le\eps'_{\text{Newton}}=10^{-14}$. A maximum of $1000$ iterations is allowed. Starting with a damping factor of $1$, the damping factor is halved if one Newton step's residual exceeds $0.9$ times the previous Newton step's residual. If none of the convergence criteria is met after $1000$ iterations, we mark the solver as converged. Typically, when this happens it is then due to machine accuracy, and not to a lack of convergence for Newton's method. 

The hierarchical structure of the method allows us to improve the iterative scheme by replacing the initial $\tilde{w}$ with $\algs{w}{n}{k}{l}$ for the correction steps. This improves the starting point of Newton's method and can reduce the amount of Newton iterations. We could also think about replacing $\tilde{w}_0$ with $\algs{w}{n}{k}{l}$. This would have an influence on the results presented below. Nevertheless, we observe that these changes do not significantly alter the reported results. Hence, we can only conclude that the proposed method is quite robust against such algorithmic variants.

\ifthenelse{\boolean{compilefromscratch}}{
\pgfplotstableread[col sep = comma] {../Code/results_alg2/results_imexmd_parallel_rkrk4_kmax4_PR_IMEX_eps1.csv}\PRaa
\pgfplotstablecreatecol[create col/copy column from table= {../Code/results_alg2/results_imexmd_serialrk4_kmax3_PR_IMEX_eps1.csv}{wallclocktime}]{wallclocktime_serial}{\PRaa}
\pgfplotstableread[col sep = comma] {../Code/results_alg2/results_imexmd_parallel_rkrk6_kmax4_PR_IMEX_eps1.csv}\PRab
\pgfplotstablecreatecol[create col/copy column from table= {../Code/results_alg2/results_imexmd_serialrk6_kmax3_PR_IMEX_eps1.csv}{wallclocktime}]{wallclocktime_serial}{\PRab}
\pgfplotstableread[col sep = comma] {../Code/results_alg2/results_imexmd_parallel_rkrk8_kmax4_PR_IMEX_eps1.csv}\PRac
\pgfplotstablecreatecol[create col/copy column from table= {../Code/results_alg2/results_imexmd_serialrk8_kmax3_PR_IMEX_eps1.csv}{wallclocktime}]{wallclocktime_serial}{\PRac}
\pgfplotstableread[col sep = comma] {../Code/results_alg2/results_imexmd_parallel_rkrk4_kmax4_vdP_IMEX_eps0.001.csv}\vdPaa
\pgfplotstablecreatecol[create col/copy column from table= {../Code/results_alg2/results_imexmd_serialrk4_kmax3_vdP_IMEX_eps0.001.csv}{wallclocktime}]{wallclocktime_serial}{\vdPaa}
\pgfplotstableread[col sep = comma] {../Code/results_alg2/results_imexmd_parallel_rkrk6_kmax4_vdP_IMEX_eps0.001.csv}\vdPab
\pgfplotstablecreatecol[create col/copy column from table= {../Code/results_alg2/results_imexmd_serialrk6_kmax3_vdP_IMEX_eps0.001.csv}{wallclocktime}]{wallclocktime_serial}{\vdPab}
\pgfplotstableread[col sep = comma] {../Code/results_alg2/results_imexmd_parallel_rkrk8_kmax4_vdP_IMEX_eps0.001.csv}\vdPac
\pgfplotstablecreatecol[create col/copy column from table= {../Code/results_alg2/results_imexmd_serialrk8_kmax3_vdP_IMEX_eps0.001.csv}{wallclocktime}]{wallclocktime_serial}{\vdPac}

\pgfplotstableread[col sep = comma] {../Code/results_alg2/results_imexmd_parallel_rkrk4_kmax8_PR_IMEX_eps1.csv}\PRba
\pgfplotstablecreatecol[create col/copy column from table= {../Code/results_alg2/results_imexmd_serialrk4_kmax7_PR_IMEX_eps1.csv}{wallclocktime}]{wallclocktime_serial}{\PRba}
\pgfplotstableread[col sep = comma] {../Code/results_alg2/results_imexmd_parallel_rkrk6_kmax8_PR_IMEX_eps1.csv}\PRbb
\pgfplotstablecreatecol[create col/copy column from table= {../Code/results_alg2/results_imexmd_serialrk6_kmax7_PR_IMEX_eps1.csv}{wallclocktime}]{wallclocktime_serial}{\PRbb}
\pgfplotstableread[col sep = comma] {../Code/results_alg2/results_imexmd_parallel_rkrk8_kmax8_PR_IMEX_eps1.csv}\PRbc
\pgfplotstablecreatecol[create col/copy column from table= {../Code/results_alg2/results_imexmd_serialrk8_kmax7_PR_IMEX_eps1.csv}{wallclocktime}]{wallclocktime_serial}{\PRbc}
\pgfplotstableread[col sep = comma] {../Code/results_alg2/results_imexmd_parallel_rkrk4_kmax8_vdP_IMEX_eps0.001.csv}\vdPba
\pgfplotstablecreatecol[create col/copy column from table= {../Code/results_alg2/results_imexmd_serialrk4_kmax7_vdP_IMEX_eps0.001.csv}{wallclocktime}]{wallclocktime_serial}{\vdPba}
\pgfplotstableread[col sep = comma] {../Code/results_alg2/results_imexmd_parallel_rkrk6_kmax8_vdP_IMEX_eps0.001.csv}\vdPbb
\pgfplotstablecreatecol[create col/copy column from table= {../Code/results_alg2/results_imexmd_serialrk6_kmax7_vdP_IMEX_eps0.001.csv}{wallclocktime}]{wallclocktime_serial}{\vdPbb}
\pgfplotstableread[col sep = comma] {../Code/results_alg2/results_imexmd_parallel_rkrk8_kmax8_vdP_IMEX_eps0.001.csv}\vdPbc
\pgfplotstablecreatecol[create col/copy column from table= {../Code/results_alg2/results_imexmd_serialrk8_kmax7_vdP_IMEX_eps0.001.csv}{wallclocktime}]{wallclocktime_serial}{\vdPbc}

\pgfplotstableread[col sep = comma] {../Code/results_alg2/results_imexmd_parallel_rkrk4_kmax36_PR_IMEX_eps1.csv}\PRca
\pgfplotstablecreatecol[create col/copy column from table= {../Code/results_alg2/results_imexmd_serialrk4_kmax35_PR_IMEX_eps1.csv}{wallclocktime}]{wallclocktime_serial}{\PRca}
\pgfplotstableread[col sep = comma] {../Code/results_alg2/results_imexmd_parallel_rkrk6_kmax36_PR_IMEX_eps1.csv}\PRcb
\pgfplotstablecreatecol[create col/copy column from table= {../Code/results_alg2/results_imexmd_serialrk6_kmax35_PR_IMEX_eps1.csv}{wallclocktime}]{wallclocktime_serial}{\PRcb}
\pgfplotstableread[col sep = comma] {../Code/results_alg2/results_imexmd_parallel_rkrk8_kmax36_PR_IMEX_eps1.csv}\PRcc
\pgfplotstablecreatecol[create col/copy column from table= {../Code/results_alg2/results_imexmd_serialrk8_kmax35_PR_IMEX_eps1.csv}{wallclocktime}]{wallclocktime_serial}{\PRcc}
\pgfplotstableread[col sep = comma] {../Code/results_alg2/results_imexmd_parallel_rkrk4_kmax36_vdP_IMEX_eps0.001.csv}\vdPca
\pgfplotstablecreatecol[create col/copy column from table= {../Code/results_alg2/results_imexmd_serialrk4_kmax35_vdP_IMEX_eps0.001.csv}{wallclocktime}]{wallclocktime_serial}{\vdPca}
\pgfplotstableread[col sep = comma] {../Code/results_alg2/results_imexmd_parallel_rkrk6_kmax36_vdP_IMEX_eps0.001.csv}\vdPcb
\pgfplotstablecreatecol[create col/copy column from table= {../Code/results_alg2/results_imexmd_serialrk6_kmax35_vdP_IMEX_eps0.001.csv}{wallclocktime}]{wallclocktime_serial}{\vdPcb}
\pgfplotstableread[col sep = comma] {../Code/results_alg2/results_imexmd_parallel_rkrk8_kmax36_vdP_IMEX_eps0.001.csv}\vdPcc
\pgfplotstablecreatecol[create col/copy column from table= {../Code/results_alg2/results_imexmd_serialrk8_kmax35_vdP_IMEX_eps0.001.csv}{wallclocktime}]{wallclocktime_serial}{\vdPcc}

\pgfplotstableread[col sep = comma] {../Code/results_alg2/results_imexmd_parallel_rkrk4_kmax72_PR_IMEX_eps1.csv}\PRda
\pgfplotstablecreatecol[create col/copy column from table= {../Code/results_alg2/results_imexmd_serialrk4_kmax71_PR_IMEX_eps1.csv}{wallclocktime}]{wallclocktime_serial}{\PRda}
\pgfplotstableread[col sep = comma] {../Code/results_alg2/results_imexmd_parallel_rkrk6_kmax72_PR_IMEX_eps1.csv}\PRdb
\pgfplotstablecreatecol[create col/copy column from table= {../Code/results_alg2/results_imexmd_serialrk6_kmax71_PR_IMEX_eps1.csv}{wallclocktime}]{wallclocktime_serial}{\PRdb}
\pgfplotstableread[col sep = comma] {../Code/results_alg2/results_imexmd_parallel_rkrk8_kmax72_PR_IMEX_eps1.csv}\PRdc
\pgfplotstablecreatecol[create col/copy column from table= {../Code/results_alg2/results_imexmd_serialrk8_kmax71_PR_IMEX_eps1.csv}{wallclocktime}]{wallclocktime_serial}{\PRdc}
\pgfplotstableread[col sep = comma] {../Code/results_alg2/results_imexmd_parallel_rkrk4_kmax72_vdP_IMEX_eps0.001.csv}\vdPda
\pgfplotstablecreatecol[create col/copy column from table= {../Code/results_alg2/results_imexmd_serialrk4_kmax71_vdP_IMEX_eps0.001.csv}{wallclocktime}]{wallclocktime_serial}{\vdPda}
\pgfplotstableread[col sep = comma] {../Code/results_alg2/results_imexmd_parallel_rkrk6_kmax72_vdP_IMEX_eps0.001.csv}\vdPdb
\pgfplotstablecreatecol[create col/copy column from table= {../Code/results_alg2/results_imexmd_serialrk6_kmax71_vdP_IMEX_eps0.001.csv}{wallclocktime}]{wallclocktime_serial}{\vdPdb}
\pgfplotstableread[col sep = comma] {../Code/results_alg2/results_imexmd_parallel_rkrk8_kmax72_vdP_IMEX_eps0.001.csv}\vdPdc
\pgfplotstablecreatecol[create col/copy column from table= {../Code/results_alg2/results_imexmd_serialrk8_kmax71_vdP_IMEX_eps0.001.csv}{wallclocktime}]{wallclocktime_serial}{\vdPdc}
}

\begin{figure}
	\centering
	\begin{tabular}{cc}
        \ifthenelse{\boolean{compilefromscratch}}{
        
        \tikzsetnextfilename{Scaling1}
		\begin{tikzpicture}[scale=0.6]
			\begin{semilogxaxis}[cycle list name=epslist,xlabel={$N$},ylabel={speed-up} ,grid=major,legend style={at={(0.98,0.02)},anchor= south east,font=\footnotesize},title={$k_\text{max}=3$},label style={font=\large},title style={font=\large},legend cell align={left}]
			
			\addplot table [skip first n=1,x expr={\thisrowno{0}}, y expr={(\thisrow{wallclocktime_serial}/ \thisrow{wallclocktime})}] {\PRaa};
			\addplot table [skip first n=1,x expr={\thisrowno{0}}, y expr={(\thisrow{wallclocktime_serial}/ \thisrow{wallclocktime})}] {\PRab};
			\addplot table [skip first n=1,x expr={\thisrowno{0}}, y expr={(\thisrow{wallclocktime_serial}/ \thisrow{wallclocktime})}] {\PRac};
			\pgfplotsset{cycle list shift=3}
			\addplot table [skip first n=1,x expr={\thisrowno{0}}, y expr={(\thisrow{wallclocktime_serial}/ \thisrow{wallclocktime})}] {\vdPaa};
			\addplot table [skip first n=1,x expr={\thisrowno{0}}, y expr={(\thisrow{wallclocktime_serial}/ \thisrow{wallclocktime})}] {\vdPab};
			\addplot table [skip first n=1,x expr={\thisrowno{0}}, y expr={(\thisrow{wallclocktime_serial}/ \thisrow{wallclocktime})}] {\vdPac};
			\addplot[black,thick] table [skip first n=1,x expr={\thisrowno{0}}, y expr={\thisrowno{0}*(3+1)/(2*\thisrowno{0}+3-1)}] {\PRaa};
			\end{semilogxaxis}
		\end{tikzpicture}&
		\tikzsetnextfilename{Scaling2}
		\begin{tikzpicture}[scale=0.6]
			\begin{semilogxaxis}[cycle list name=epslist,xlabel={$N$},grid=major,legend style={at={(1.02,0.5)},anchor= west,font=\footnotesize},title={$k_\text{max}=7$},label style={font=\large},title style={font=\large},legend cell align={left}]
			\addplot table [skip first n=1,x expr={\thisrowno{0}}, y expr={(\thisrow{wallclocktime_serial}/ \thisrow{wallclocktime})}] {\PRba};
			\addplot table [skip first n=1,x expr={\thisrowno{0}}, y expr={(\thisrow{wallclocktime_serial}/ \thisrow{wallclocktime})}] {\PRbb};
			\addplot table [skip first n=1,x expr={\thisrowno{0}}, y expr={(\thisrow{wallclocktime_serial}/ \thisrow{wallclocktime})}] {\PRbc};
			\pgfplotsset{cycle list shift=3}
			\addplot table [skip first n=1,x expr={\thisrowno{0}}, y expr={(\thisrow{wallclocktime_serial}/ \thisrow{wallclocktime})}] {\vdPba};
			\addplot table [skip first n=1,x expr={\thisrowno{0}}, y expr={(\thisrow{wallclocktime_serial}/ \thisrow{wallclocktime})}] {\vdPbb};
			\addplot table [skip first n=1,x expr={\thisrowno{0}}, y expr={(\thisrow{wallclocktime_serial}/ \thisrow{wallclocktime})}] {\vdPbc};			
			
			\addplot[black,thick] table [skip first n=1,x expr={\thisrowno{0}}, y expr={\thisrowno{0}*(7+1)/(2*\thisrowno{0}+7-1)}] {\PRba};
			\legend{PR $\eps=1$ $4^{th}$ order,PR $\eps=1$ $6^{th}$ order,PR $\eps=1$ $8^{th}$ order,vdP $\eps=10^{-3}$ $4^{th}$ order,vdP $\eps=10^{-3}$ $6^{th}$ order,vdP $\eps=10^{-3}$ $8^{th}$ order}
			\end{semilogxaxis}
		\end{tikzpicture}\\
		\tikzsetnextfilename{Scaling3}
		\begin{tikzpicture}[scale=0.6]
			\begin{semilogxaxis}[cycle list name=epslist,xlabel={$N$},ylabel={speed-up} ,grid=major,legend style={at={(0.02,0.98)},anchor= north west,font=\footnotesize},title={$k_\text{max}=35$},label style={font=\large},title style={font=\large},legend cell align={left}]
		
			\addplot table [skip first n=1,x expr={\thisrowno{0}}, y expr={(\thisrow{wallclocktime_serial}/ \thisrow{wallclocktime})}] {\PRca};
			\addplot table [skip first n=1,x expr={\thisrowno{0}}, y expr={(\thisrow{wallclocktime_serial}/ \thisrow{wallclocktime})}] {\PRcb};
			\addplot table [skip first n=1,x expr={\thisrowno{0}}, y expr={(\thisrow{wallclocktime_serial}/ \thisrow{wallclocktime})}] {\PRcc};
			\pgfplotsset{cycle list shift=3}
			\addplot table [skip first n=1,x expr={\thisrowno{0}}, y expr={(\thisrow{wallclocktime_serial}/ \thisrow{wallclocktime})}] {\vdPca};
			\addplot table [skip first n=1,x expr={\thisrowno{0}}, y expr={(\thisrow{wallclocktime_serial}/ \thisrow{wallclocktime})}] {\vdPcb};
			\addplot table [skip first n=1,x expr={\thisrowno{0}}, y expr={(\thisrow{wallclocktime_serial}/ \thisrow{wallclocktime})}] {\vdPcc};			
			
			\addplot[black,thick] table [skip first n=1,x expr={\thisrowno{0}}, y expr={\thisrowno{0}*(35+1)/(2*\thisrowno{0}+35-1)}] {\PRca};
			\end{semilogxaxis}
		\end{tikzpicture}&
		\tikzsetnextfilename{Scaling4}
		\begin{tikzpicture}[scale=0.6]
			\begin{semilogxaxis}[cycle list name=epslist,xlabel={$N$},grid=major,legend style={at={(1.02,0.5)},anchor= west,font=\footnotesize},title={$k_\text{max}=71$},label style={font=\large},title style={font=\large},legend cell align={left}]
			
			\addplot table [skip first n=1,x expr={\thisrowno{0}}, y expr={(\thisrow{wallclocktime_serial}/ \thisrow{wallclocktime})}] {\PRda};
			\addplot table [skip first n=1,x expr={\thisrowno{0}}, y expr={(\thisrow{wallclocktime_serial}/ \thisrow{wallclocktime})}] {\PRdb};
			\addplot table [skip first n=1,x expr={\thisrowno{0}}, y expr={(\thisrow{wallclocktime_serial}/ \thisrow{wallclocktime})}] {\PRdc};
			\pgfplotsset{cycle list shift=3}
			\addplot table [skip first n=1,x expr={\thisrowno{0}}, y expr={(\thisrow{wallclocktime_serial}/ \thisrow{wallclocktime})}] {\vdPda};
			\addplot table [skip first n=1,x expr={\thisrowno{0}}, y expr={(\thisrow{wallclocktime_serial}/ \thisrow{wallclocktime})}] {\vdPdb};
			\addplot table [skip first n=1,x expr={\thisrowno{0}}, y expr={(\thisrow{wallclocktime_serial}/ \thisrow{wallclocktime})}] {\vdPdc};			
			
			\addplot[black,thick] table [skip first n=1,x expr={\thisrowno{0}}, y expr={\thisrowno{0}*(71+1)/(2*\thisrowno{0}+71-1)}] {\PRda};
			\legend{PR $\eps=1$ $4^{th}$ order,PR $\eps=1$ $6^{th}$ order,PR $\eps=1$ $8^{th}$ order,vdP $\eps=10^{-3}$ $4^{th}$ order,vdP $\eps=10^{-3}$ $6^{th}$ order,vdP $\eps=10^{-3}$ $8^{th}$ order}
			\end{semilogxaxis}
		\end{tikzpicture}
		}
		{
            \includegraphics{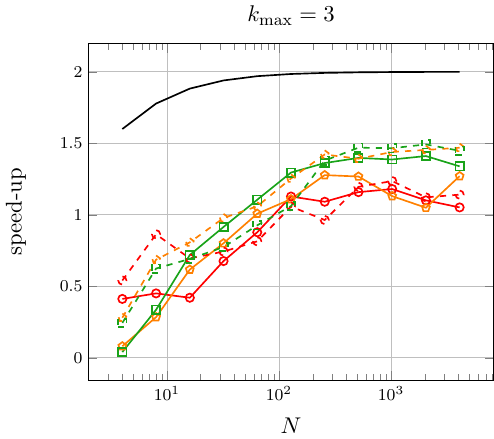}&
            \includegraphics{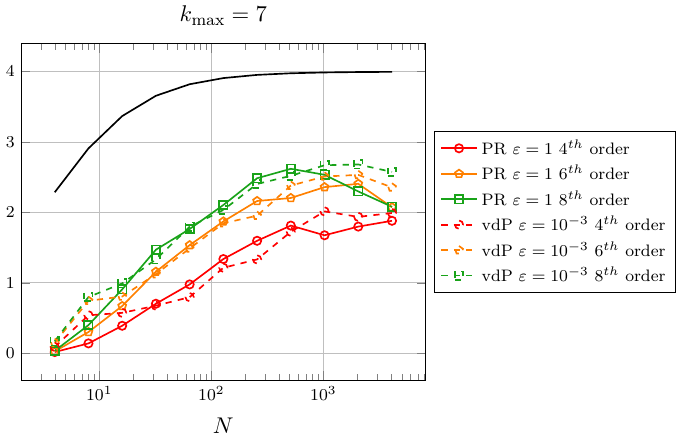}\\
            \includegraphics{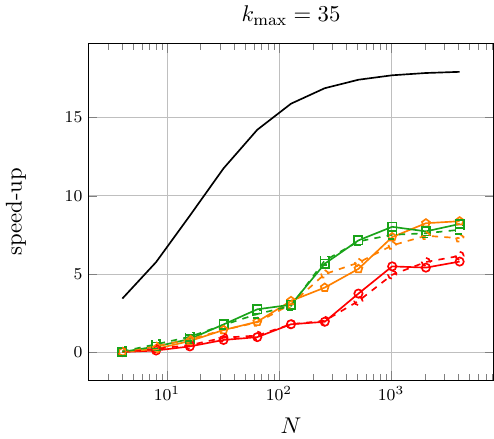}&
            \includegraphics{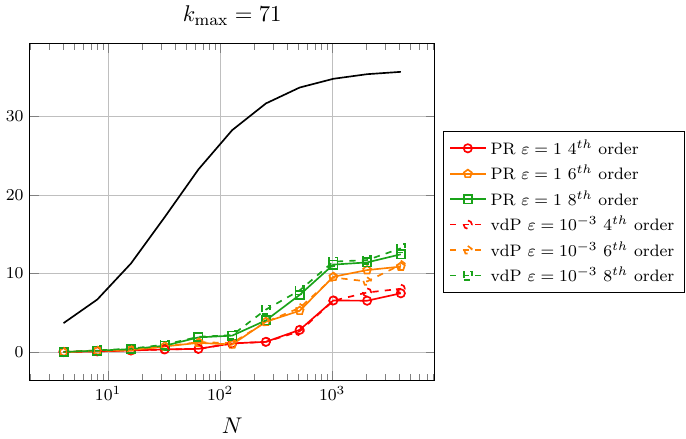}
		}
	\end{tabular}
	\caption{Speed-up over number of timesteps for different $k_\text{max}$ with non-stiff Pareschi-Russo problem (solid lines) and stiff van-der-Pol equation (dashed lines) for $\methodt{4}{\km}$ (red), $\methodt{6}{\km}$ (orange) and $\methodt{8}{\km}$ (green) with the more efficient Alg.~\ref{alg:mdodejonas}. Black line indicates theoretically achievable speed-up $\frac{N\cdot(\km+1)}{2N+\km-1}$.}\label{fig:Speedup}
\end{figure}

The results regarding speed-up are reported in Fig.~\ref{fig:Speedup} for an increasing number of correction steps $\km$ and hence also an increasing number of processors ranging from $\#\text{procs}=2$ ($\km=3$) to $\#\text{procs}=36$ ($\km=71$). We choose a non-stiff (PR-IMEX with $\eps=1$) and a stiff (vdP-IMEX with $\eps=10^{-3}$) problem to see if this has an influence on the parallel performance. All calculations are done with the $4^{th}$, $6^{th}$ and $8^{th}$ order method. The figure shows that the achieved speed-ups are quite similar for both considered problems, but differently for the three different quadrature rules: with $\methodt{8}{\km}$ the highest speed-up can be achieved. This is most probably caused by the more `processor-local' work due to the more stages compared to the other considered schemes. As the communication introduces some overhead, the scaling properties of the algorithm are increased by performing more processor-local operations for an almost similar amount of communication. This is a typical observation made when parallelizing numerical methods. It has, e.g., also been observed for high-order discontinuous Galerkin methods where a better parallel performance can be achieved if higher order ansatz polynomials are chosen as they increase the amount of processor-local work~\cite{MunzDG12}. As the parallel performance results are almost the same for the stiff and non-stiff problem, we anticipate that the obtained performance gain is transferable to other problems.

One can see that if too few timesteps are used, no speed-up is achieved and a deceleration of the simulation is obtained. This can be caused by the overhead introduced by establishing the communication. Nevertheless, if more timesteps are used, a significant speed-up can be achieved with the proposed parallelization strategy. Considering for example the case with $36$ processors ($\km=71$), one can achieve a speed-up of up to a factor of $\approx13$ for $\methodt{8}{\km}$, $\approx11$ for $\methodt{6}{\km}$ and up to $\approx8$ for $\methodt{4}{\km}$. These values are in the same range as reported in the review paper by Ong and Schroder~\cite{OngSchroder2020}. In terms of their paper, our method belongs to the class of direct time-parallel methods. 

The maximum theoretical speed-up is not achieved for all cases. There can be two reasons for this: First, the overhead introduced by the communication slows down the computations. Second, the processor-local work is distributed unevenly. The first point is strongly influenced by the current implementation and architecture and is therefore beyond the scope of this work. To gain more insight into the amount of processor-local work we consider the amount of Newton iterations performed by each processor.

\begin{figure}
	\centering
    \begin{tabular}{cc}
	\ifthenelse{\boolean{compilefromscratch}}{
        
    \tikzsetnextfilename{Newton1}
		\begin{tikzpicture}[scale=0.6]
		\begin{loglogaxis}[cycle list name=epslist,xlabel={$N$},ylabel={Newton iterations} ,grid=major,legend style={at={(0.98,0.02)},anchor= south east,font=\footnotesize},title={PR-IMEX, $\methodt{8}{71}$, $\eps=1$},label style={font=\large},title style={font=\large},legend cell align={left}]
		\addplot table[skip first n=1,x expr={\thisrowno{0}}, y expr={\thisrowno{74}}] {../Code/results_alg2/results_imexmd_parallel_rkrk8_kmax72_PR_IMEX_eps1.csv};
		\addplot table[skip first n=1,x expr={\thisrowno{0}}, y expr={\thisrowno{75}}] {../Code/results_alg2/results_imexmd_parallel_rkrk8_kmax72_PR_IMEX_eps1.csv};
		\addplot table[skip first n=1,x expr={\thisrowno{0}}, y expr={\thisrowno{76}}] {../Code/results_alg2/results_imexmd_parallel_rkrk8_kmax72_PR_IMEX_eps1.csv};
		\addplot table[skip first n=1,x expr={\thisrowno{0}}, y expr={\thisrowno{77}}] {../Code/results_alg2/results_imexmd_parallel_rkrk8_kmax72_PR_IMEX_eps1.csv};
		\addplot table[skip first n=1,x expr={\thisrowno{0}}, y expr={\thisrowno{78}}] {../Code/results_alg2/results_imexmd_parallel_rkrk8_kmax72_PR_IMEX_eps1.csv};
		\addplot table[skip first n=1,x expr={\thisrowno{0}}, y expr={\thisrowno{79}}] {../Code/results_alg2/results_imexmd_parallel_rkrk8_kmax72_PR_IMEX_eps1.csv};
		\addplot table[skip first n=1,x expr={\thisrowno{0}}, y expr={\thisrowno{109}}] {../Code/results_alg2/results_imexmd_parallel_rkrk8_kmax72_PR_IMEX_eps1.csv};
		\legend{$k=0\&1$,$k=2\&3$,$k=4\&5$,$k=6\&7$,$k=8\&9$,$k=10\&11$,$k=70\&71$}
		\end{loglogaxis}
		\end{tikzpicture}&
		\tikzsetnextfilename{Newton2}
		\begin{tikzpicture}[scale=0.6]
		\begin{loglogaxis}[cycle list name=epslist,xlabel={$N$},grid=major,legend style={at={(0.98,0.02)},anchor= south east,font=\footnotesize},title={vdP-IMEX, $\methodt{8}{71}$, $\eps=10^{-3}$},label style={font=\large},title style={font=\large},legend cell align={left}]
		\addplot table[skip first n=1,x expr={\thisrowno{0}}, y expr={\thisrowno{74}}] {../Code/results_alg2/results_imexmd_parallel_rkrk8_kmax72_vdP_IMEX_eps0.001.csv};
		\addplot table[skip first n=1,x expr={\thisrowno{0}}, y expr={\thisrowno{75}}] {../Code/results_alg2/results_imexmd_parallel_rkrk8_kmax72_vdP_IMEX_eps0.001.csv};
		\addplot table[skip first n=1,x expr={\thisrowno{0}}, y expr={\thisrowno{76}}] {../Code/results_alg2/results_imexmd_parallel_rkrk8_kmax72_vdP_IMEX_eps0.001.csv};
		\addplot table[skip first n=1,x expr={\thisrowno{0}}, y expr={\thisrowno{77}}] {../Code/results_alg2/results_imexmd_parallel_rkrk8_kmax72_vdP_IMEX_eps0.001.csv};
		\addplot table[skip first n=1,x expr={\thisrowno{0}}, y expr={\thisrowno{78}}] {../Code/results_alg2/results_imexmd_parallel_rkrk8_kmax72_vdP_IMEX_eps0.001.csv};
		\addplot table[skip first n=1,x expr={\thisrowno{0}}, y expr={\thisrowno{79}}] {../Code/results_alg2/results_imexmd_parallel_rkrk8_kmax72_vdP_IMEX_eps0.001.csv};
		\addplot table[skip first n=1,x expr={\thisrowno{0}}, y expr={\thisrowno{109}}] {../Code/results_alg2/results_imexmd_parallel_rkrk8_kmax72_vdP_IMEX_eps0.001.csv};
		\legend{$k=0\&1$,$k=2\&3$,$k=4\&5$,$k=6\&7$,$k=8\&9$,$k=10\&11$,$k=70\&71$}
		\end{loglogaxis}
		\end{tikzpicture}
		}
		{
            \includegraphics{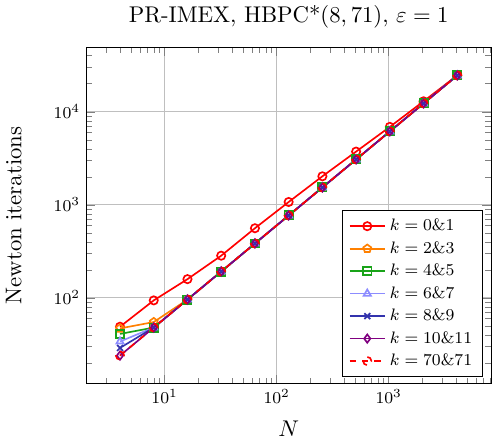}&
            \includegraphics{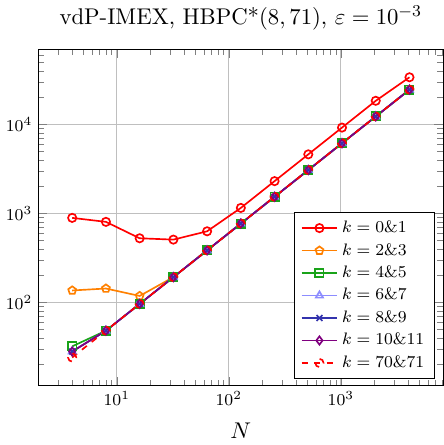}
		}
	\end{tabular}
	\caption{Required amount of Newton iterations for different correction steps (grouped by the processor on which they are performed) and two selected test setups (left: PR-IMEX with $\eps=1$; right: vdP-IMEX with $\eps=10^{-3}$) over the number of timesteps.}\label{fig:Newton_iterations}
\end{figure}

In Fig.~\ref{fig:Newton_iterations} the Newton iterations on the first $6$ and the last processor are shown for the $\methodt{8}{\km}$ scheme for the non-stiff PR-IMEX problem and the stiff vdP-IMEX equation. It is shown that the required amount of iterations decreases with an increasing index of the correction step, i.e. with the index of the processor. Especially the first processor which is responsible for the predictor and the first corrector step requires significantly more iterations than the other processors. This trend is even enhanced for the considered stiff problem. To cure this imbalance one can either think about a different distribution of the processors than presented in Fig.~\ref{fig:timeparallel}, or a termination criterion for Newton's method depending on the current index of the correction step.

\subsection{Arenstorf Orbit}
Finally, we use the Arenstorf orbit problem~\cite{HaiWan1,OngSpiteri} for further illustration of the proposed method's capabilities. The Arenstorf orbit problem describes a three body problem, where the movement of a light object is influenced by two heavy objects. This can, for example, be a satellite being influenced by two planets. The problem is actually a second-order differential equation, here formulated as a system of first-order ODEs:
\begin{align*}
	w'=\begin{pmatrix}w_3\\w_4\\w_1+2w_4-\mu'\frac{w_1+\mu}{D_1}-\mu\frac{w_1-\mu'}{D_2}\\w_2-2w_3-\mu'\frac{w_2}{D_1}-\mu\frac{w_2}{D_2}\end{pmatrix}.
\end{align*}
Here, we have defined
\begin{align*}
	D_1&:=\left((w_1+\mu)^2+w_2^2\right)^{\frac{3}{2}},\quad D_2:=\left((w_1-\mu')^2+w_2^2\right)^{\frac{3}{2}},\\ \mu&:=0.012277471,\quad\mu':=1-\mu,
\end{align*}
and the initial conditions
\begin{align*}
	w_0=\left(0.994,~0,~0,~-2.001585106379\right)^T.
\end{align*}
Although there is no multi-scale character of the problem, we artificially split the equation into an explicit and an implicit part. For that purpose, all parts of the equation which are divided by $D_1$ or $D_2$ are simply treated implicitly, the remaining parts are treated explicitly.

The solution is a closed orbit with a period of $17.065216560159$. Similar to~\cite{OngSpiteri}, we choose $10^5$ equidistant timesteps to simulate one period of the problem. We choose the $\methodt{8}{\km}$ method with Alg.~\ref{alg:mdodejonas} for our calculations, which results in an error after one period of $\|w-w_0\|_2=1.7818\cdot10^{-9}$ for $\km=71$.

\begin{figure}[ht]
	\centering
    \ifthenelse{\boolean{compilefromscratch}}{
        
    \tikzsetnextfilename{Arenstorf1}
	\begin{tikzpicture}[scale=0.79]
		\begin{axis}[cycle list name=epslist,xlabel={$w_1$},ylabel={$w_2$} ,grid=major,legend style={at={(0.02,0.02)},anchor= south west,font=\footnotesize},title={Arenstorf orbit problem, $\methodt{8}{71}$},label style={font=\large},title style={font=\large},legend cell align={left}]
		\addplot[mark=none,thick,red] table [skip first n=1,x expr={\thisrowno{1}}, y expr={\thisrowno{2}}] {./Arenstorf_Orbit_Solution.csv};
		\end{axis}
	\end{tikzpicture}
	}
	{
	  \includegraphics{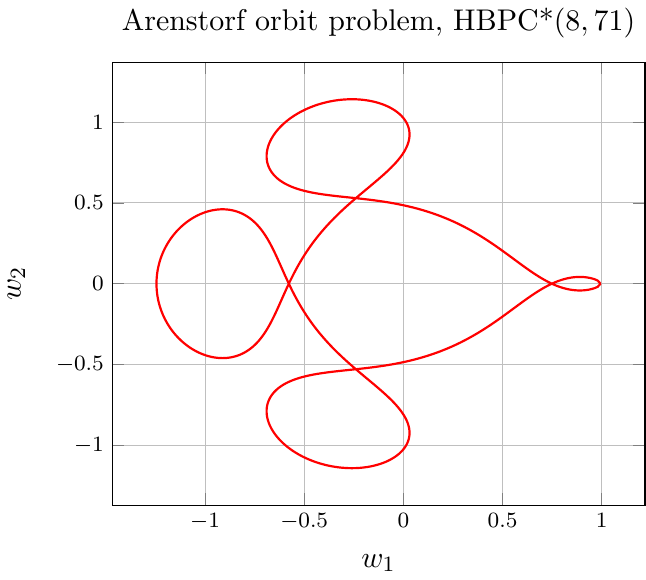}
	}
	\caption{Solution of the Arenstorf orbit problem for the two spatial variables. $10^5$ timesteps of the $8^{th}$ order IMEX-MD scheme described in Alg.~\ref{alg:mdodejonas} are used to simulate one period.}\label{fig:Arenstorf}
\end{figure}

In Fig.~\ref{fig:Arenstorf} the solution of $w_1$ and $w_2$ for the Arenstorf orbit problem is shown. One can see that after one period, the initial condition is reached with good accuracy and a periodic orbit is obtained. With $\km=71$, and hence choosing $36$ processors, the speed-up is $\approx14$.

To obtain a comparison in a ``real-world" setting we consider the scenario of having a serial code and one performs $7$ correction steps, what is a reasonable $\km$ for the $8^{th}$ order method. Having now the possibility to use a parallel scheme on one compute node with $36$ processors, one might use $\km=71$. Comparing those two computing times, one still obtains a speed-up of $\approx1.5$ and at the same time a potentially better solution.

As a very last example, we consider the Arenstorf orbit problem with relatively few timesteps, $N=5{,}000$. (Note that in this work, we only use timesteps that are uniformly spaced. The Arenstorf orbit is an example of a difficult problem that asks for adaptive timestepping, which is beyond the scope of this work.) We use the $8^{th}$ order scheme and $\km = 7$. In Fig.~\ref{fig:Arenstorf_PredCorr} we plot the orbits for the predictor and the last correction step, once for Alg.~\ref{alg:mdode} (left) and once for Alg.~\ref{alg:mdodejonas} (right). All other parameters are exactly the same. It is clear that at least the predictor of Alg.~\ref{alg:mdode} is rather useless here, and also the last correction value is not a closed orbit. On the other hand, the modifications done in Alg.~\ref{alg:mdodejonas} lead to a better solution, where both predictor and the last correction step are, in the `eyeball-norm', closed orbits. This once again shows the superiority of Alg.~\ref{alg:mdodejonas} over its more straightforward counterpart.

\begin{figure}[ht]
	\centering
    \ifthenelse{\boolean{compilefromscratch}}{
        
    \tikzsetnextfilename{Arenstorf2}
	\begin{tikzpicture}[scale=0.7]
		\begin{axis}[cycle list name=epslist,xlabel={$w_1$},ylabel={$w_2$} ,grid=major,legend style={at={(0.98,0.98)},anchor= north east,font=\footnotesize},
		title={Arenstorf orbit, $\method{8}{7}$, Alg.~1},
		label style={font=\large},title style={font=\large},legend cell align={left}]
		\addplot[mark=none,thick,blue]  table [x expr={\thisrowno{0}}, y expr={\thisrowno{1}}] {../Code/Arenstorf/Arenstorf_Alg1_RK8_5e3.csv};
		\addplot[mark=none,thick,green] table [x expr={\thisrowno{2}}, y expr={\thisrowno{3}}] {../Code/Arenstorf/Arenstorf_Alg1_RK8_5e3.csv};
		\legend{Predictor,Last correction}
		\end{axis}
	\end{tikzpicture}
	\tikzsetnextfilename{Arenstorf3}
	\begin{tikzpicture}[scale=0.7]
		\begin{axis}[cycle list name=epslist,xlabel={$w_1$} ,grid=major,legend style={at={(0.98,0.98)},anchor= north east,font=\footnotesize},
		title={Arenstorf orbit, $\methodt{8}{7}$, Alg.~2},
		label style={font=\large},title style={font=\large},legend cell align={left}]
		\addplot[mark=none,thick,blue]  table [x expr={\thisrowno{0}}, y expr={\thisrowno{1}}] {../Code/Arenstorf/Arenstorf_Alg2_RK8_5e3.csv};
		\addplot[mark=none,thick,green] table [x expr={\thisrowno{2}}, y expr={\thisrowno{3}}] {../Code/Arenstorf/Arenstorf_Alg2_RK8_5e3.csv};
		\legend{Predictor,Last correction}
		\end{axis}
	\end{tikzpicture}	
	}
	{
	  \includegraphics{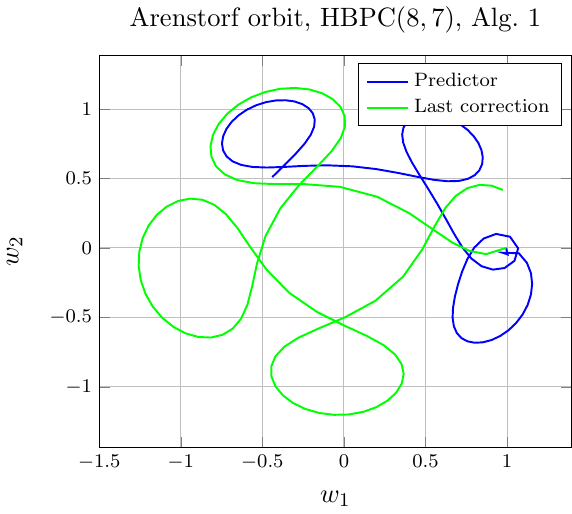}
	  \includegraphics{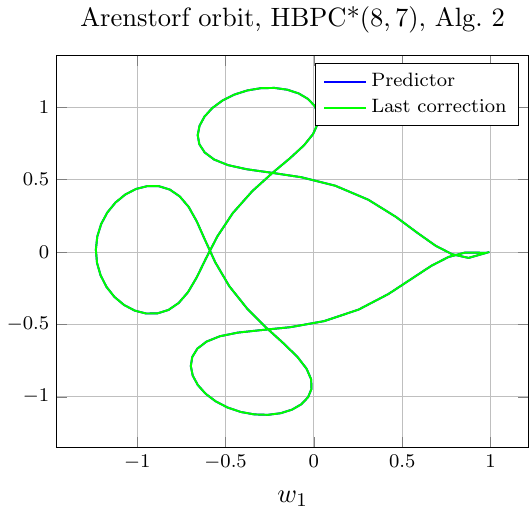}
	}
	\caption{Solution of the Arenstorf orbit problem for the two spatial variables. A total of $N = 5{,}000$ timesteps of the $8^{th}$ order IMEX-MD scheme described in Alg.~\ref{alg:mdode} (left) and in Alg.~\ref{alg:mdodejonas} (right) are used to simulate until final time $T=17.065216560159$, which should correspond to one period.  The improved algorithm produces something that is close to what is expected to be a closed orbit, whereas the first algorithm is far from the exact solution, with the predictor being wildly off.}
	\label{fig:Arenstorf_PredCorr}
\end{figure}

\section{Conclusions and outlook}\label{sec:Conclusion}
In this work, we have developed a novel IMEX solvers for the numerical treatment of ODEs that operate on multiple derivatives of the ODE's flux function. The algorithm has been specifically designed to be 
\begin{itemize}
 \item of high-order (we showed results until order eight, higher orders are easily achievable by increasing the number of collocation points or the number of derivatives used),
 \item and parallelizable in time. 
\end{itemize}
There are two flavors of the method, one (Alg. \ref{alg:mdode}) is a softer modification to our previous result \cite{SealSchuetz19} that is more ammenable to analysis, and the second one (Alg. \ref{alg:mdodejonas}) is more storage efficient and has better scaling results. We have shown numerical results demonstrating the behavior of the algorithms. In the light of~\cite{OngSchroder2020}, the scaling results seem to be very much in line with other state-of-the-art methods. 

There are multiple important open areas that need to be addressed with future work. The most accessible problem to consider would be to increase the total number of derivatives used, or work on different integration points $c$.  Of notable interest would be to explore collocation solvers described in Ex.~\ref{expl:rk} constructed from the so-called Gauss-Turan-type~\cite{StroudStancu1965} quadrature rules.  These solvers would ideally offer low-storage alternatives to the present solvers.  Next, it would be interesting to explore versions of this solver that make use of arbitrary multiderivative Runge-Kutta methods, not of the collocation variety.  In addition, it is imperative that we further explore implementations for PDEs.  To date, multiderivative methods have largely been sidelined, even though they can outperform lauded and highly optimized \texttt{MATLAB} routines such as the builtin \texttt{ode15s} (cf. \cite{AbdiConte2020} for one such case study).  One reason for this lack of broader interest is arguably the difficult computation of the second (and third,~...) derivatives in practical applications. For some ODEs stemming from a semi-discrete PDE, this can be done rather elegantly~\cite{MultiDerHDG2015}, but for many, it is a tedious task, which requires leveraging Lax-Wendroff type time discretizations.  We therefore suggest exploring the possibilities of using finite-difference approximations, as done in~\cite{BAEZA201887,ZorioEtAl} in the context of explicit Taylor methods. It is not yet clear how this interacts with the stability properties of the overall method, in particular for highly stiff problems, as these type of discretizations require fully discrete stability analyses, but is certainly worth looking into.  Furthermore, the extension and testing of this time-stepping option to the classical IMEX application areas remains to be explored.

%
%

\begin{acknowledgements}
This study was initiated during a research stay of D.C.~Seal at the University of Hasselt, which was supported by
the Special Research Fund (BOF) of Hasselt University. Additional funding came from the Office of Naval Research,
grant number N0001419WX01523 and N0001420WX00219.
J.~Zeifang was partially supported by the German Research Foundation (DFG) through the project GRK 2160/1 ``Droplet Interaction Technologies".
The HPC-resources and services used in this work were provided by the VSC (Flemish Supercomputer Center), funded by the Research Foundation - Flanders (FWO) and the Flemish Government.
\end{acknowledgements}

\bibliographystyle{spmpsci}      
\bibliography{ListPaper}   

\end{document}